\theoremstyle{plain}
\newtheorem{theorem}{Theorem}[section]
\newtheorem{lemma}[theorem]{Lemma}
\theoremstyle{definition}
\newtheorem{definition}[theorem]{Definition}
\theoremstyle{remark}
\begin{document}
\begin{frontmatter}
\title{A Central Limit Theorem for the Permutation Importance Measure}
\runtitle{A Central Limit Theorem for the permutation importance measure}

\begin{aug}
\author[A]{\fnms{Nico}~\snm{Föge}\ead[label=e1]{nico.foege@ovgu.de}},
\author[B]{\fnms{Lena}~\snm{Schmid}\ead[label=e3]{lena.schmid@tu-dortmund.de}},
\author[A]{\fnms{Marc}~\snm{Ditzhaus$^\dagger$}\ead[label=e2]{}},
\and

\author[B,C]{\fnms{Markus}~\snm{Pauly}\ead[label=e4]{markus.pauly@tu-dortmund.de}}
\address[A]{Institut für Mathematische Stochastik,
Otto-von-Guericke Universität Magdeburg, \printead{e1,e2}}

\address[B]{Department of Statistics,
TU Dortmund University, \printead{e3,e4}}
\address[C]{Research Center Trustworthy Data Science and Security, University Alliance Ruhr, \printead{e4}}

\runauthor{N. Föge, L. Schmid, M. Ditzhaus, M. Pauly.}

\end{aug}

\begin{abstract}
Random Forests have become a widely used tool in machine learning since their introduction in 2001, known for their strong performance in classification and regression tasks. One key feature of Random Forests is the Random Forest Permutation Importance Measure (RFPIM), an internal, non-parametric measure of variable importance. While widely used, theoretical work on RFPIM is sparse, and most research has focused on empirical findings. However, recent progress has been made, such as establishing consistency of the RFPIM, although a mathematical analysis of its asymptotic distribution is still missing.\\
In this paper, we provide a formal proof of a Central Limit Theorem for RFPIM using U-statistics theory. Our approach deviates from the conventional Random Forest model by assuming a random number of trees and imposing conditions on the regression functions and error terms, which must be bounded and additive, respectively.
Our result aims at improving the theoretical understanding of RFPIM rather than conducting comprehensive hypothesis testing. However, our contributions provide a solid foundation and demonstrate the potential for future work to extend to practical applications which we also highlight with a small simulation study.
\end{abstract}

\begin{keyword}[class=MSC]
\kwd[Primary ]{60F05}
\kwd[; secondary ]{62G08}
\end{keyword}

\begin{keyword}
\kwd{Random Forest}
\kwd{Permutation Importance}
\kwd{Central Limit Theorem}
\end{keyword}

\textit{$^\dagger$\small We would like to thank the late Marc Ditzhaus, a bright mind and wonderful mentor, collaboration partner and mentee for his contribution to that paper.} 

\end{frontmatter}

\section{Introduction}

Since their introduction in 2001, Random Forests \cite{breiman2001random} quickly gained popularity due to their robust performance in both classification and regression tasks. Over the years, their application has been extended to right-censored data \cite{ishwaran2008random} and the imputation of missing values \cite{steckhoven2012forest,tang2017random}. Despite these empirical successes, the mathematical and statistical properties of Random Forests reamined poorly understood for some time. The complex structure and data-dependent weights of Random Forests pose significant challenges for rigorous mathematical analysis.

The first studies on the consistency of Random Forests began with simplified versions \cite{biau2008consistency, biau2012analysis} until \cite{scornet2015consistency} ultimately established the $L^2$-consistency of the (almost) original Random Forest algorithm introduced by \cite{breiman2001random}. Further strengthening the theoretical foundation, \cite{mentch2014ensemble} derived a Central Limit Theorem using a $U$-statistics framework, while \cite{wager2018estimation} established asymptotic normality and confidence intervals for tree ensembles under the "honesty" condition, also providing an upper bound on their bias.

Beyond predictive performance, Random Forests offer interpretability 
in the form of an internal non-parametric measure of variable importance (VIMP). Among various importance measures, we focus on the Random Forest Permutation Importance (RFPIM). The RFPIM leverages the Out-of-Bag (OOB) sample, which includes all observations not used in training a given tree during the bootstrap step. This allows for an internal validation mechanism. To compute RFPIM, the values of the feature of interest are randomly permuted within the OOB sample, and the resulting decrease in accuracy is averaged over all trees.

In contrast to the theoretical work on Random Forests themselves, results for RFPIM are sparse. However, some foundational work has been done, starting with \cite{gregorutti2017correlation}, which explored the setting of correlated features. \cite{BurimundMarkus} advanced the theoretical understanding by proving the asymptotic unbiasedness and weak consistency of RFPIM, while \cite{ishwaran2019standard} constructed confidence intervals for RFPIM assuming asymptotic normality. Although simulations supported this  assumption, a formal proof of asymptotic normality is still missing.
This is where our work makes an
important contribution. We apply 
U-statistic theory from \cite{peng2022rates} and leverage convergence rates for the CART algorithm from \cite{mazumder2024convergence} to establish and rigorously prove a Central Limit Theorem for RFPIM. Thus, providing formal support for the findings of \cite{ishwaran2019standard}.\\

The paper is organized as follows: In Section 2, we introduce our mathematical framework and formally define Permutation Importance. Section 3 presents the necessary theory of $U$-statistics and connects it to RFPIM. Our main results, including the asymptotic normality of RFPIM are provided in Section 4. In Section 5 we contribute a simulation study investigating the practical relevance of our findings.
 Finally, Section 6 discusses the implications of our findings.

\section{Mathematical framework}
We assume a sparse and non-parametric learning problem with regression function $\widetilde{m}:[0,1]^p\longrightarrow\mathbb{R}$ and model equation
\begin{align}
    Y=\widetilde m(\mathbf{X})+\varepsilon, \label{Modellgleichunf}
\end{align}
with a uniformly distributed feature vector $\mathbf{X}=\left(X^{(1)},\ldots,X^{(p)}\right)\in[0,1]^p$ and some random
error $\varepsilon$ with $\mathbb{E}\left[\varepsilon\right]=0$ and 
$\mathbb{V}\text{ar}\left(\varepsilon\right)=\sigma^2>0$. Moreover, $\mathbf{X}$ and $\varepsilon$ 
are assumed to be independent. 
With sparsity, we mean there is a subset of features $\mathcal{S}\subseteq \{1,\ldots,p\}$
that contains all information about \(Y\). Assuming without loss of generality that \(\mathcal{S} = \{1, \ldots, s\}\),
this means that there is a function \(m: [0,1]^s \rightarrow \mathbb{R}\) such that 
\(m(\mathbf{X}_{\mathcal{S}}) = \widetilde m(\textbf{X})\), where $\mathbf{X}_{\mathcal{S}}=\left(X^{(1)},\ldots,X^{(s)}\right)$, see  \cite{BurimundMarkus} for the same assumption.
It is widely known, that Random Forests adapt well to sparse frameworks because the choice of optimal splits at each node
avoids splittings on non-informative features \cite{scornet2015consistency}.

We are interested in the "Importance" of single features for $Y$.
To extract the importance of a single feature, we need to exclude interactions that shed the isolated influence of a feature on $Y$. To this end, we assume an additive structure, i.e there are functions $m_j:[0,1]\rightarrow\mathbb{R}$, $j=1,\ldots,p$ such that
\begin{align}
\label{additive Funktion}
    \widetilde m(\mathbf{X})=\sum_{j=1}^pm_j(X^{(j)}).
\end{align}
Let 
$$\mathcal{D}_n=\Big\{\big[\textbf{X}_i^T,Y_i\big]^T\in[0,1]^p\times\mathbb{R}:i=1,...,n\Big\},$$ 
with $\big[\textbf{X}_i^T,Y_i\big]^T$ be a training set containing i.i.d. copies of $[\textbf{X}^T,Y]^T$
also containing the i.i.d. errors $\varepsilon_1,\ldots,\varepsilon_n$.
A theoretical measure for the feature importance for $j\in\{1,\ldots, p\}$ is given by
\begin{align*}
    I(j):=
    \mathbb{E}\left[\Big(Y_1-\widetilde m \left(\mathbf{X}_{j,1}\right)\Big)^2\right]
    -
    \sigma^2,
\end{align*}
where
$\mathbf{X}_{j,1}=\left[X_{1}^{(1)},\ldots,X_{1}^{(j-1)},X^{(j)},X_{1}^{(j+1)},
\ldots,X_1^{(p)}\right]$ and $X^{(j)}$ an independent copy of $X_1^{(j)}$ introduced by \cite{gregorutti2017correlation}. In informal terms, \(I(j)\) quantifies the contribution of the variability of the $j$-th feature to the variability of the response variable \(Y\), by measuring the increase in prediction error when the feature \(X^{(j)}\) is excluded from the model.
 Due to our model assumptions $I(j)$
simplifies to
 \begin{align}
        I(j)&= \mathbb{E}\left[
         \left(\nonumber
        Y_1-\widetilde m(\mathbf{X}_{j,1})
    \right)^2\right]-\sigma^2
    =
      \mathbb{E}\left[
         \left(
        m_j\left(X_1^{(j)}\right)+\varepsilon_1-m_j\left(X^{(j)}\right)
    \right)^2\right]-\sigma^2\\
    &=
    \mathbb{V}\text{ar}\left(
        m_j\left(X_1^{(j)}\right)+\varepsilon_1-m_j\left(X^{(j)}\right)\right)
    -\sigma^2=
    2\mathbb{V}\text{ar}\left(
        m_j\left(X_1^{(j)}\right)\right). \label{I als var(m)}
    \end{align}
    
\subsection{Random Forest Permutation Importance}
 A Random Forest is an ensemble of
tree-based learners, where each tree is trained on a subsample of $\mathcal{D}_n$ of size $a_n\in\{1,\ldots, n\}$. As in \cite{BurimundMarkus}
we restrict ourselves to sampling without replacement. Moreover at each node of each tree $m_{try}\in\{1,\ldots , p\}$ features
 are randomly chosen as candidates for the split direction. 
 To apply the results of \cite{peng2022rates}, we 
 adapt to their framework and  
draw the number of trees
 $\widehat{M}\in\mathbb{N}$
 at random assuming 
 $\widehat{M} \sim Bin\left({n \choose n-a_n},p_M\right)$ with 
 $\mathbb{E}[\widehat M]=M$
 \cite{peng2022rates}.
 Let $\mathbf{\Theta}$ be the generic random vector
 determining the subsample construction and the feature sub-spacing.
 For a Random Forest, $\widehat M$ independent copies $\mathbf{\Theta}_1,\ldots,\mathbf{\Theta}_{\widehat{M}}$ of  $\mathbf{\Theta}$ are drawn independent of $\mathcal{D}_n$. The piecewise constant
 estimation for $\widetilde{m}$ of the $t$-th tree, $t\in\{1,\ldots,\widehat M\}$, is denoted by 
 $m_{n,1}(\cdot,\mathbf{\Theta}_t, \mathcal{D}_n)$ and the Random Forest prediction for a fixed $\mathbf{x}\in[0,1]^p$
 is defined as 
\begin{align*}
m_{n,\widehat M}(\mathbf{x},\mathbf{\Theta}_1,\ldots,\mathbf{\Theta}_{\widehat M},\mathcal{D}_n)
=
    \frac{1}{\widehat M}\sum_{t=1}^{\widehat M}m_{n,1}(\mathbf{x},\mathbf{\Theta}_t, \mathcal{D}_n).
\end{align*}
The subsample determined by $\mathbf{\Theta}_t$ is denoted by $\mathcal{D}_n^{(t)}$.
To provide a more formal description of the selection mechanism, we define the generating vector $$\mathbf{\Theta}_t=\left[\left(\mathbf{\Theta}_t^{(1)}\right)^{\top},\left(\mathbf{\Theta}_t^{(2)}\right)^{\top}\right]^{\top},$$ which can be decomposed into subvectors $\mathbf{\Theta}_t^{(1)}$ and $\mathbf{\Theta}_t^{(2)}$.
The components of $\mathbf{\Theta}_t^{(1)}=\left[\Theta_{1,t}^{(1)},\ldots,\Theta_{n,t}^{(1)}\right]^{\top}$ indicate whether the
$i$-th observation, $i=1,\ldots,n$ has been selected ($\Theta_{i,t}^{(1)}=1$) or not ($\Theta_{i,t}^{(1)}=0$), while $\mathbf{\Theta}_t^{(1)}$
models the feature sup-spacing.
The set of observation-indices
$\mathcal{D}_n^{-(t)}$
not used for
the training of the $t$-th tree is called out-of-bag (OOB) sample of the $t$-the tree. This delivers an internal validation, that can also be used for the estimation of $I(j)$
by measuring the increase of the mean squared error on the OOB-sample after permuting its observations along 
the $j$-th feature (\cite{BurimundMarkus}). 

Let $\mathbf{X}_i^{\pi_{j,t}}=(X_i^{(1)},\ldots,X_i^{(j-1)},X_{\pi_{j,t}(i)}^{(j)},X_i^{(j+1)},\ldots,X_i^{(p)})^{\top}$, 
where $\pi_{j,t}$ is a permutation of all observations in $ \mathcal{D}_n^{-(t)}$.
The Random Forest Permutation Importance Measure (RFPIM) for a feature \(j\) then is defined as
\begin{align*}
&I^{OOB}_{n,\widehat M}(j) = \frac{1}{\widehat M \gamma_n} \sum_{t=1}^{\widehat M} \sum_{i \in \mathcal{D}_n^{-(t)}} \left\{ \left( Y_i - m_{n,1}(\mathbf{X}_i^{\pi_{j,t}}, \mathbf{\Theta}_t) \right)^2 - \left( Y_i - m_{n,1}(\mathbf{X}_i, \mathbf{\Theta}_t) \right)^2 \right\} \\
&= \!\frac{1}{\widehat M \gamma_n} \sum_{t=1}^{\widehat M} \sum_{i=1}^n \!\left\{ \left( Y_i \!- \!m_{n,1}(\mathbf{X}_i^{\pi_{j,t}}\!, \mathbf{\Theta}_t) \right)^2 \!- \!\left( Y_i \!- \!m_{n,1}(\mathbf{X}_i, \mathbf{\Theta}_t) \right)^2 \right\}\! \mathbbm{1} \left\{ \!X_i \in \mathcal{D}_{n,\mathbf{X}}^{-(t)} \!\right\}\!,
\end{align*}
for \(j \in \{1, \ldots, p\}\), where \(\gamma_n\) is the cardinality of \(\mathcal{D}_n^{-(t)}\).
 As we assume sampling without replacement ($a_n$ times) during the bootstrap step \(\gamma_n = n- a_n\) is constant. 
 Furthermore, we have
 \(\mathcal{D}_{n,\mathbf{X}}^{-(t)} = \{ \mathbf{X}_i : i \in \mathcal{D}_n^{-(t)} \}\).  Note that \(\pi_{j,t}\) is independent of \(\mathcal{D}_n\).
 While \cite{BurimundMarkus} proved, that $I^{OOB}_{n, M}(j)$
is a weakly consistent and asymptotically unbiased estimator for $I(j)$,
 we 
 investigate the asymptotic distribution of $I^{OOB}_{n,\widehat M}(j)$.
Simulations of \cite{ishwaran2019standard} suggest that $I^{OOB}_{n, M}(j)$ is asymptotically normal.
In the following, we are going to formally prove, that $I^{OOB}_{n,\widehat M}(j)$ is indeed asymptotically normal under some technical assumptions. To the best of our knowledge this is the first approach for a theoretical validation of the confidence intervals for $I(j)$ proposed in \cite{ishwaran2019standard}. Note that our results are for a randomly chosen number of trees, while the previous results for permutation importance were based on a fixed number of trees.
 The assumption of a randomly chosen number of trees is necessary to fit into the $U$ statistic framework of \cite{peng2022rates}, 
who already discussed this assumption.

\section{$U$-statistics and permutation importance}
We first prove that a simplified version of ${I}^{OOB}_{n,M}(j)$ is asymptotically normal via $U$-statistics and then show that ${I}^{OOB}_{n,M}(j)$ and it's simplified version have the same asymptotic distribution. 

We need a simplified version, because ${I}^{OOB}_{n,M}(j)$ does not fit into the general definition of a $U$-statistics, since the summation only goes over
$M<{n \choose \gamma_n}$ subsets. Moreover, the permutation puts additional randomness to the kernel. 
Luckily there is a solution for both issues known as generalized $U$-statistics.
\begin{definition}[Generalized $U$-statistics]
\label{Def 1}
    Let $\mathbf{Z}_1,\ldots,\mathbf{Z}_n$ be i.i.d. random vectors and let $h:\mathbb{R}^s\rightarrow\mathbb{R}$ be a (possibly randomized)
    permutation symmetric function.
    Then $U$-statistics with kernel $h_s$ are defined as estimators of the form,
    \begin{align}
          U_{n,s,M,\omega}=\frac{1}{\widehat M}\sum_{(t)}\rho_t h_s\left(\mathbf{Z}_{i_1},\ldots\mathbf{Z}_{i_s};\omega_t\right)
          \label{generalized U-Statistic}
    \end{align}
    where the summation is over all possible subsests from $\{\mathbf{Z}_1,\ldots,\mathbf{Z}_n\}$ of size $s$. Moreover, $\omega_t$ denotes i.i.d. randomness independent of     $(\mathbf{Z}_i)_i$ and $(\rho_t)_t$, where $\widehat M=\sum_t\rho_t$ and
    $\rho_t$ are i.i.d. Bernoulli random variables with     
    $\mathbb{P}\left(\rho_t = 1\right)=M/{n \choose s}$
    determining  which subsamples are selected. If $M={n \choose s}$, then (\ref{generalized U-Statistic}) is called a generalized complete $U$-statistics and if $M<{n \choose s}$ it 
    is called generalized
 incomplete $U$-statistic \cite{peng2022rates}.
\end{definition}
\noindent $U$-statistics already were used to establish normality and rates of convergence for the Random Forest regression estimator $m_{n,M}$
in \cite{mentch2016quantifying} and \cite{peng2022rates}. We want to apply the theory to the RFPIM. However, $I_{n,\widehat M}^{OOB}(j)$ cannot be put directly into a form like $U_{n,s,M,\omega}$.
The situation is particularly complex because, for each tree, a subset of $\mathcal{D}n$ is used to train $m_{n,1}$, while the remaining observations are utilized to compute the OOB-error.
That is why we start with a simplification of $I_{n,M}^{OOB}(j)$
by replacing the tree-estimation by the true regression function and thus define
\begin{align}
    \widetilde{I}^{OOB}_{n,M}(j)
    &=
    \frac{1}{M}
    \sum_{t=1}^M
        \frac{1}{\gamma_n}
    \sum_{i\in\mathcal{D}_n^{-(t)}}
    \Big\{
        \big(
        Y_i-\widetilde{m}(\textbf{X}_i^{\pi_{j,t}})
        \big)^2
        -
        \big(
        Y_i-\widetilde{m}(\textbf{X}_i)
        \big)^2
    \Big\}
    \Big\}.\label{I mit wahren m}
\end{align}
\noindent Our simplified version (\ref{I mit wahren m}) of permutation importance fits into \Cref{Def 1}, when we choose
 $s=\gamma_n$, $\omega_t=\pi_{j,t}$ and $(\mathbf{Z}_1,\ldots,\mathbf{Z}_n):= ((\mathbf{X}_{1},Y_{1}),\ldots,(\mathbf{X}_{n},Y_{{n}}))$.
Thus $\widetilde{I}^{OOB}_{n,\widehat M}(j)$ is a generalized $U$-statistic
with kernel 
$$
h_{\gamma_n}\left(\mathbf{Z}_{i_1},\ldots\mathbf{Z}_{i_{\gamma_n}};\pi_{j,t}\right)
=
\frac{1}{\gamma_n}
    \sum_{i\in\mathcal{D}_n^{-(t)}}
    \Big\{
        \big(
        Y_i-\widetilde{m}(\textbf{X}_i^{\pi_{j,t}})
        \big)^2
        -
        \big(
        Y_i-\widetilde{m}(\textbf{X}_i)
        \big)^2
    \Big\}
    \Big\}.
$$
Below we first show that $\widetilde{I}^{OOB}_{n,M}(j)$ is asymptotically normal and then show that its difference with ${I}^{OOB}_{n,M}(j)$ asymptotically vanishes in probability to deduce the main result.

\section{Asymptotic results}
\subsection{Central Limit Theorem for RFPIM}
This section contains our main asymptotic results for the Permutation Importance Measure.
To prove these, we need some additional assumptions
\begin{enumerate}
    \item[(A1)]  We only consider permutations $\pi_{j;t}\in\mathcal{V}_n
    =\left\lbrace \pi\in\mathcal{S}_{\gamma_n}:\pi(i)\neq i~\text{for all}~i\right\rbrace$, where $\mathcal{S}_{\gamma_n}$ is the symmetric group.
    \item[(A2)] $\widetilde m$ is an additive function, i.e. there are functions $m_{\ell}:[0,1]\rightarrow\mathbb{R}$, $\ell=1,\ldots ,p$ such that $\widetilde m(\mathbf{X})
    =\sum\limits_{\ell=1}^p m_{\ell}\left(X^{(l)}\right)$.
    \item[(A3)] $\widetilde m$ is bounded, i.e. there is a $K>0$, such that $\sup\limits_{\mathbf{x}\in[0,1]^p}\vert\widetilde{m}(\mathbf{x})\vert\leq K$. 
    \item[(A4)] The error terms have finite fourth moments, i.e. $\mathbb{E}[\varepsilon_1^4]<\infty$. 
    \item[(A5)]  $\widehat M\sim Bin\left({n \choose \gamma_n},M/{n \choose \gamma_n}\right)$ is chosen at random
    \item[(A6)] The features are mutually independent.
\end{enumerate}
The Assumption (A1) is needed for technical reasons and was already used in \cite{BurimundMarkus} to establish consistency of the RFPIM .
(A2) is needed to make an isolated measurement of the influence of each feature possible. Condition (A3) is needed for technical reasons, but we believe that the existence of an upper bound for the regression function is not too restrective for most practical applications.
The moment assumption (A4) is required for the consistency of the tree prediction \cite{BurimundMarkus}.
For the proof of the consistency of $m_{n,M}$ in \cite{scornet2015consistency} assumption (A2) is also needed. \cite{scornet2015consistency}
assumes the component functions to be continuous and instead of (A4) requires gaussian errors.
The random number of trees (A5) is needed to fit in the generalized $U$-statistics framework of
\cite{peng2022rates}. Condition (A6) also implies uncorrelated features, which is important since correlated features can lead to significant bias in RFPIM as discussed in \cite{strobl2008conditional}.
To derive asymptotic normality, we need to make sure that we are in a non-degenerated case. To this end we additionally assume that 
$\mathbb{V}\text{ar}\left(m_j(X_1^{(j)})\right)>0$. In fact, 
if $\mathbb{V}\text{ar}\left(m_j\left(X_1^{(j)}\right)\right)=0$, then $m_j(X_1^{(j)})$ would be constant almost surely 
meaning the permutation along the $j$-th component of $\mathbf{X}_1$ would not change $\widetilde m\left(\mathbf{X}_1\right)$,
which implies
$\widetilde{I}^{OOB}_{n,M}(j)=0$ almost surely. 
Under these assumptions, $\widetilde{I}^{OOB}_{n,M}(j)$ fits into the framework of generalized incomplete $U$-statistics
and we can make use of Theorem 2 of \cite{peng2022rates} to achieve its asymptotic normality. To better follow the prove ideas, we repeat their theorem in
\begin{theorem}\label{Theorem 2.1 von Peng}
    Let $\mathbf{Z}_1,\ldots,\mathbf{Z}_n$ be i.i.d. from $F_{\mathbf{Z}}$
    and $U_{n,\gamma_n,M,\pi}$ be a generalized incomplete 
    $U$-statistic with kernel $h_{\gamma_n}(\mathbf{Z}_1,\ldots,\mathbf{Z}_n;\pi_j)$.
    Let $\mu=\mathbb{E}[ h_{\gamma_n}]$, $\zeta_{1,\gamma_n}=\mathbb{V}\text{ar}(\mathbb{E}[ h_{\gamma_n}|\mathbf{Z}_1])$
    and $\zeta_{\gamma_n}=\mathbb{V}\text{ar}( h_{\gamma_n})$.
    Suppose that $$\frac{\mathbb{E}\left[\vert h_{\gamma_n}-\mu \vert^{2k}
    \right]}{
    \mathbb{E}\left[\vert h_{\gamma_n}-\mu \vert^{k}
    \right]^2}$$
    is uniformly bounded in $\gamma_n$ for $k=2,3$.
    If $\frac{\gamma_n}{n}\frac{\zeta_{\gamma_n}}{\gamma_n\zeta_{1,\gamma_n}}\rightarrow 0$, then
    \begin{align}
        \frac{U_{n,\gamma_n,M,\pi}-\mu}{\sqrt{\gamma_n^2\zeta_{1,\gamma_n}/n+\zeta_{\gamma_n}/M}}
        \overset{d}{\longrightarrow} \mathcal{N}(0,1) \nonumber
    \end{align}
    as $M\rightarrow\infty$ and $n\rightarrow\infty$ \cite{peng2022rates}.
\end{theorem}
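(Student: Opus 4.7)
My strategy is a Hoeffding-type decomposition combined with a conditional central limit theorem for the random subsample selection. The first step is to split
\[
U_{n,\gamma_n,M,\pi} - \mu
=
\underbrace{\bigl(U_{n,\gamma_n,M,\pi} - U_{n,\gamma_n}^{\mathrm{c}}\bigr)}_{A_{n,M}}
+
\underbrace{\bigl(U_{n,\gamma_n}^{\mathrm{c}} - \mu\bigr)}_{B_n},
\]
where $U_{n,\gamma_n}^{\mathrm{c}} = \binom{n}{\gamma_n}^{-1} \sum_{(t)} h_{\gamma_n}(\mathbf{Z}_{i_1},\ldots,\mathbf{Z}_{i_{\gamma_n}};\pi_{j,t})$ is the hypothetical complete (but still $\pi$-randomized) $U$-statistic. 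The two summands account for the variance contributions $\gamma_n^2\zeta_{1,\gamma_n}/n$ (from resampling the data) and $\zeta_{\gamma_n}/M$ (from selecting only $\widehat M\approx M$ out of $\binom{n}{\gamma_n}$ subsets).

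For $B_n$ I would use the classical Hajek projection. With $h_1(z) = \mathbb{E}[h_{\gamma_n}(\mathbf{Z}_1,\ldots,\mathbf{Z}_{\gamma_n};\pi) \mid \mathbf{Z}_1 = z] - \mu$, I write $B_n = \frac{\gamma_n}{n}\sum_{i=1}^n h_1(\mathbf{Z}_i) + R_n$, where standard $U$-statistic variance identities give $\mathbb{E}[R_n^2] = O(\gamma_n^2 \zeta_{\gamma_n}/n^2)$. The assumption $\frac{\gamma_n}{n}\cdot\frac{\zeta_{\gamma_n}}{\gamma_n\zeta_{1,\gamma_n}}\to 0$ is precisely what makes $R_n$ negligible compared to the $i.i.d.$ leading term of variance $\gamma_n^2\zeta_{1,\gamma_n}/n$, and the Lindeberg--Lyapunov CLT then yields normality of that leading term; the uniform moment-ratio condition for $k=2$ supplies the Lyapunov bound.

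For $A_{n,M}$ I would condition on $\mathcal{D}_n$ and the family $(\pi_{j,t})_t$. Given these, $A_{n,M}$ is a sum of the $\binom{n}{\gamma_n}$ independent centered increments $\rho_t\bigl[h_{\gamma_n}(\cdot;\pi_{j,t}) - U_{n,\gamma_n}^{\mathrm{c}}\bigr]/\widehat M$ with $\rho_t\stackrel{i.i.d.}{\sim}\mathrm{Bin}(1,M/\binom{n}{\gamma_n})$, so a conditional CLT (again via Lyapunov, now using the $k=3$ moment-ratio bound) delivers $\mathcal{N}(0,\zeta_{\gamma_n}/M)$ in distribution; the $k=2$ bound and Chebyshev's inequality show that the random conditional variance concentrates at its deterministic limit so that the conditional CLT upgrades to an unconditional one. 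Finally, since $(\rho_t,\pi_{j,t})_t$ is independent of $(\mathbf{Z}_i)_i$, a Cram\'er--Wold / characteristic-function argument gives joint asymptotic normality of $(A_{n,M},B_n)$ with diagonal covariance, and summing the limits yields the claim.

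The main obstacle I foresee is controlling the remainder $R_n$ of the Hoeffding decomposition of $B_n$: the kernel $h_{\gamma_n}$ is itself randomized through $\pi$ and of growing order $\gamma_n\to\infty$, so the usual orthogonality of the H-components must be argued after first integrating out $\pi$, and the quadratic-and-higher terms must be shown to be of order $o(\gamma_n^2\zeta_{1,\gamma_n}/n)$ uniformly in $n$. It is exactly the interplay between the non-degeneracy ratio $\gamma_n\zeta_{1,\gamma_n}\gg\zeta_{\gamma_n}/n$ and the uniform moment-ratio hypothesis that makes this quantitative bound go through.
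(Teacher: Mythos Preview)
The paper does not prove this theorem. It is quoted verbatim from \cite{peng2022rates} (Peng, Coleman, Mentch, \emph{Electron.\ J.\ Statist.}\ 2022) and used as a black box to derive Theorem~\ref{Theorem 3}; the appendix only verifies the hypotheses (the moment-ratio bounds and the ratio condition $\frac{\gamma_n}{n}\frac{\zeta_{\gamma_n}}{\gamma_n\zeta_{1,\gamma_n}}\to 0$) for the specific kernel arising from permutation importance. So there is no ``paper's own proof'' to compare your proposal against.

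That said, your sketch is essentially the strategy Peng et al.\ use: split into the complete $U$-statistic part (Haj\'ek projection plus remainder) and the subsampling part (conditional CLT over the Bernoulli selectors). Two points where your write-up is loose and would need work if you actually carried this out: (i) in $A_{n,M}$ you divide by $\widehat M=\sum_t\rho_t$, which is random, so the summands are not independent as written; one first replaces $\widehat M$ by $M$ via a ratio argument ($\widehat M/M\to 1$ in probability) and then applies the conditional Lyapunov CLT to the resulting genuine i.i.d.\ sum; (ii) the bound $\mathbb{E}[R_n^2]=O(\gamma_n^2\zeta_{\gamma_n}/n^2)$ is not automatic for kernels of growing order $\gamma_n$---the full Hoeffding variance is $\sum_{c\ge 2}\binom{\gamma_n}{c}\binom{n-\gamma_n}{\gamma_n-c}\binom{n}{\gamma_n}^{-1}\zeta_{c,\gamma_n}$, and controlling all $\zeta_{c,\gamma_n}$ simultaneously by $\zeta_{\gamma_n}$ is where the uniform moment-ratio hypothesis really enters, not just in the Lyapunov step.
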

\noindent To apply \Cref{Theorem 2.1 von Peng} to $\widetilde{I}^{OOB}_{n,M}(j)$, the main part of the work is to show that
$\mathbb{E}\left[\vert  h_{\gamma_n}-\mu \vert^{2k}
    \right]/
    \mathbb{E}\left[\vert h_{\gamma_n}-\mu \vert^{k}
    \right]^2$
    is uniformly bounded in $\gamma_n$ for $k=2,3$ and to prove that $\frac{\gamma_n}{n}\frac{\zeta_{\gamma_n}}{\gamma_n\zeta_{1,\gamma_n}}\rightarrow 0$.
    This is done in the Appendix (\Cref{Lemma 1} and \Cref{Lemma 2} and some lengthy calculations) and we thus receive the following
\begin{theorem}\label{Theorem 3}
    Let (A1)-(A6) be fulfilled and $\mathbf{Z}_1,\ldots,\mathbf{Z}_{n}$ independent and identically distributed.
    If $\mathbb{V}\text{ar}\left(m_j(X_1^{(j)})\right)>0$, $\gamma_n\rightarrow\infty$ as $n\rightarrow\infty$, and $\gamma_n/\sqrt{n}\rightarrow 0$  as $n\rightarrow\infty$,
    then
    \begin{align*}
   \frac{\widetilde I_{n,M}^{OOB}(j)-I(j)}{\sqrt{\gamma_n^2\zeta_{1,\gamma_n}/n+\zeta_{\gamma_n}/M}}
        \overset{d}{\longrightarrow }\mathcal{N}(0,1)
    \end{align*}
    as $M\rightarrow\infty$ and $n\rightarrow\infty$.
\end{theorem}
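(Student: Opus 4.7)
The plan is to verify the three hypotheses of \Cref{Theorem 2.1 von Peng} for the generalized incomplete $U$-statistic representation of $\widetilde I^{OOB}_{n,M}(j)$ established in Section~3: (i) identifying the mean $\mu=\mathbb{E}[h_{\gamma_n}]$ as $I(j)$; (ii) uniform boundedness of the normalized moments $\mathbb{E}[|h_{\gamma_n}-\mu|^{2k}]/\mathbb{E}[|h_{\gamma_n}-\mu|^k]^2$ for $k=2,3$; and (iii) the variance-ratio condition $(\gamma_n/n)\cdot\zeta_{\gamma_n}/(\gamma_n\zeta_{1,\gamma_n})\to 0$.

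First I would perform a structural simplification of the kernel. Setting $V_i:=m_j(X_i^{(j)})$ and using $\varepsilon_i=Y_i-\widetilde m(\mathbf{X}_i)$, assumption~(A2) lets each summand collapse to
\begin{align*}
\bigl(Y_i-\widetilde m(\mathbf{X}_i^{\pi_{j,t}})\bigr)^2-\bigl(Y_i-\widetilde m(\mathbf{X}_i)\bigr)^2=2(V_i-V_{\pi_{j,t}(i)})\varepsilon_i+(V_i-V_{\pi_{j,t}(i)})^2,
\end{align*}
so that $h_{\gamma_n}$ is an average of such terms. Because (A1) rules out fixed points, $V_i$ and $V_{\pi_{j,t}(i)}$ are i.i.d., which together with $\mathbb{E}[\varepsilon_i]=0$ gives $\mathbb{E}[h_{\gamma_n}]=2\mathbb{V}\text{ar}(V_1)=I(j)$ via the identity~(\ref{I als var(m)}).

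Step~(ii) is the content of \Cref{Lemma 1}. The bounded-regression assumption~(A3), combined with (A2) and the independence of features in (A6), implies that each additive component $m_j$ is bounded, so the quadratic part of $h_{\gamma_n}$ is bounded in absolute value by a constant. The cross-term $\gamma_n^{-1}\sum_i(V_i-V_{\pi_{j,t}(i)})\varepsilon_i$ is a mean-zero sum whose higher moments are controlled, conditionally on $\pi_{j,t}$ and the $V_i$'s, by a Rosenthal/Marcinkiewicz--Zygmund-type estimate using the fourth moment in~(A4). These upper bounds, together with a uniform positive lower bound on $\mathbb{E}[|h_{\gamma_n}-\mu|^k]$ produced by the non-degeneracy hypothesis $\mathbb{V}\text{ar}(V_1)>0$, deliver the boundedness of both moment ratios.

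The decisive step is (iii), handled in \Cref{Lemma 2}. Conditioning on $\mathbf{Z}_1$ in the double sum defining $h_{\gamma_n}$, only the summand with $i=1$ and the unique summand with $\pi_{j,t}(i)=1$ depend non-trivially on $\mathbf{Z}_1$, and integrating out the remaining $V$'s, $\varepsilon$'s and the permutation yields the explicit first-order projection
\begin{align*}
g_1(\mathbf{Z}_1)=I(j)\tfrac{\gamma_n-1}{\gamma_n}+\tfrac{2}{\gamma_n}\bigl[(V_1-\mathbb{E} V_1)\varepsilon_1+(V_1-\mathbb{E} V_1)^2\bigr],
\end{align*}
so $\gamma_n^2\zeta_{1,\gamma_n}$ converges to a constant that is strictly positive under $\mathbb{V}\text{ar}(V_1)>0$. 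The full variance $\zeta_{\gamma_n}$ is $O(\gamma_n^{-1})$: conditionally on $\pi_{j,t}$ the $\varepsilon$-cross-term contributions across distinct indices are uncorrelated by independence of the errors, and each quadratic summand shares variables with only a bounded number of others through the image/preimage under $\pi_{j,t}$, giving an $O(\gamma_n)$ count of non-zero covariances. Substituting yields $(\gamma_n/n)\zeta_{\gamma_n}/(\gamma_n\zeta_{1,\gamma_n})=O(\gamma_n/n)=o(1)$ under the standing assumption $\gamma_n/\sqrt n\to 0$, and \Cref{Theorem 2.1 von Peng} then delivers the CLT. I expect the principal obstacle to be step~(iii): the permutation $\pi_{j,t}$ couples the summands of $h_{\gamma_n}$, so both $\zeta_{1,\gamma_n}$ and $\zeta_{\gamma_n}$ require careful bookkeeping of which conditional expectations survive, and it is here that the non-degeneracy hypothesis $\mathbb{V}\text{ar}(m_j(X_1^{(j)}))>0$ enters essentially to keep $\gamma_n^2\zeta_{1,\gamma_n}$ bounded away from zero.
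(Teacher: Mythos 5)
Your overall route is the paper's route: represent $\widetilde I^{OOB}_{n,M}(j)$ as a generalized incomplete $U$-statistic, identify $\mathbb{E}[h_{\gamma_n}]=I(j)$ via the additive collapse of the kernel (the paper's \Cref{Lemma 3}), show $\gamma_n^2\zeta_{1,\gamma_n}\to\widetilde\zeta>0$ and control $\zeta_{\gamma_n}$, and then invoke \Cref{Theorem 2.1 von Peng}. Your reduction of each summand to $(V_i-V_{\pi(i)})^2+2\varepsilon_i(V_i-V_{\pi(i)})$, your first-order projection, and your observation that $\zeta_{\gamma_n}=O(\gamma_n^{-1})$ via counting the $O(\gamma_n)$ permutation-coupled covariance pairs are all consistent with what the paper does (in fact your variance bound is sharper than the paper's \Cref{Lemma 2}, which only uses $\zeta_{\gamma_n}=O(1)$ and therefore needs the full strength of $\gamma_n/\sqrt{n}\to 0$; with $\zeta_{\gamma_n}=O(\gamma_n^{-1})$ condition (iii) would follow from $\gamma_n/n\to 0$ alone).

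The genuine gap is in your step (ii), which is where essentially all of the paper's work lies. You propose to bound the ratio $\mathbb{E}[|h_{\gamma_n}-\mu|^{2k}]/\mathbb{E}[|h_{\gamma_n}-\mu|^{k}]^2$ by combining a constant upper bound on the numerator (from boundedness of the quadratic part plus a Rosenthal estimate for the $\varepsilon$-cross term) with ``a uniform positive lower bound on $\mathbb{E}[|h_{\gamma_n}-\mu|^k]$.'' No such constant lower bound exists: $h_{\gamma_n}\to\mu$ in $L^2$, and indeed $\mathbb{E}[|h_{\gamma_n}-\mu|^2]\sim c_1/\gamma_n$, so $\mathbb{E}[|h_{\gamma_n}-\mu|^k]^2\asymp\gamma_n^{-k}$. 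A constant upper bound on the numerator therefore yields only the useless estimate $O(\gamma_n^{k})$ for the ratio. What is actually required is the matched-rate statement $\mathbb{E}[|h_{\gamma_n}-\mu|^{2k}]=O(\gamma_n^{-k})$ for $k=2,3$ together with the lower bounds $\mathbb{E}[|h_{\gamma_n}-\mu|^2]\gtrsim\gamma_n^{-1}$ and (via Lyapunov) $\mathbb{E}[|h_{\gamma_n}-\mu|^3]^2\gtrsim\gamma_n^{-3}$. Your Rosenthal argument, applied conditionally on $\pi$ and the $V_i$'s, handles only the $\varepsilon$-cross term; the centered quadratic part $\gamma_n^{-1}\sum_i\{(V_i-V_{\pi(i)})^2-\mathbb{E}[\cdot]\}$ consists of summands coupled through the derangement, and establishing that its fourth and sixth central moments decay at rates $\gamma_n^{-2}$ and $\gamma_n^{-3}$ requires the explicit enumeration of which index tuples survive under the derangement structure (the probabilities $\mathbb{P}(\pi(1)=2,\pi(2)=3,\dots)=O(\gamma_n^{-r})$ that occupy most of the paper's Appendix A). Without that bookkeeping the hypothesis of \Cref{Theorem 2.1 von Peng} is not verified, so the proof as written does not close.
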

\noindent To transfer the result of \Cref{Theorem 3} 
to $I_{n,M}^{OOB}(j)$,
a sufficiently fast convergence of the base learner $m_{n,1}$ is needed. \cite{BurimundMarkus} already pointed out, that the 
consistency of the individual trees implies the consistency of $I_{n,m}^{OOB}(j)$. We need the stronger assumption that the approximation error of $m_{n,1}$
for $\widetilde{m}$ is negligible. Therefore we need to make the so called
'Sufficient Impurity Decrease' assumption and use results from \cite{mazumder2024convergence}. 
Let $A\subset [0,1]^p$ be some hyperrectangle, that is partitioned into $A_L$ and $A_R$ via axis-aligned splitting. More formally for $b\in[0,1]$ and $j\in\{1,\ldots,p\}$
\begin{align*}
    A_L&=A_L(j,b):=A\cap\{v\in[0,1]^p:v^{(j)}\leq b\},\\
    A_R&=A_R(j,b):=A\cap\{v\in[0,1]^p:v^{(j)}> b\},\,\
\end{align*}
where $v^{(j)}$ denotes the $j$-th component of $v$. Then the theoretical version of the CART splitting criterium \cite{scornet2015consistency} is defined as
\begin{align*}
    \Delta(A,j,b):=&\mathbb{P}(\mathbf{X}\in A)\mathbb{V}\text{ar}(\widetilde{m}(\mathbf{X})|\mathbf{X} \in A)
    -\mathbb{P}(\mathbf{X}\in A_L)\mathbb{V}\text{ar}(\widetilde{m}(\mathbf{X})|\mathbf{X} \in A_L)\\
    &-\mathbb{P}(\mathbf{X}\in A_R)\mathbb{V}\text{ar}(\widetilde{m}(\mathbf{X})|\mathbf{X} \in A_R).
\end{align*}
Equipped with this we can formulate the 'Sufficient Impurity Decrease' (SID) assumption 
\begin{itemize}
    \item[(A7)] There is a constant $\lambda\in(0,1)$ such that for all hyperrectangles $A\subset[0,1]^p$
    \begin{align*}
        \sup_{1\leq j\leq p,b\in[0,1]}\Delta(A,j,b)\geq\lambda\cdot\mathbb{P}(\mathbf{X}\in A)\mathbb{V}\text{ar}(\widetilde{m}(\mathbf{X})|\mathbf{X} \in A).
    \end{align*}
\end{itemize}
Additionally we need a stricter version of (A4), to give a uniform upper bound for the tree predictions.
\begin{itemize}[leftmargin=3em]
    \item[(A4*)] The error terms are bounded by some $K_{\varepsilon}>0$.
\end{itemize}
Moreover, we need to link the growth-rate of $M$ to the sample size, i.e. $M=M_n$
Equipped with these assumptions we can now prove our main theorem, which delivers the asymptotic normality of $I_{n,\widehat M}^{OOB}(j)$.
\begin{theorem}\label{Theorem 4}
    Let (A1)-(A3), (A4*), (A5)-(A7) be fulfilled and $\mathbf{Z}_1,\ldots,\mathbf{Z}_{n}$ independent and identically distributed and  $\mathbb{V}\text{ar}\left(m_j(X_1^{(j)})\right)>0$. 
    If $\gamma_n\rightarrow\infty$,  
    $\frac{M_n\gamma_n\log(a_n)^2\log(a_np)}{a_n^{\phi(\lambda)}}\rightarrow 0$ for $\phi(\lambda)=\frac{-\log_2(1-\lambda)}{1-\log_2(1-\lambda)}$ as $M_n\rightarrow\infty$ and $n\rightarrow\infty$.
    Then
    \begin{align*}
\frac{ I_{n,M}^{OOB}(j)-I(j)}{\sqrt{\gamma_n^2\zeta_{1,\gamma_n}/n+\zeta_{\gamma_n}/M_n}}
        \overset{d}{\longrightarrow }\mathcal{N}(0,1)
    \end{align*}
    as $M_n\rightarrow\infty$ and $n\rightarrow\infty$, if the trees are grown to depth $d=\lceil\log_2(n)/(1-\log_2(1-\lambda))\rceil$.
\end{theorem}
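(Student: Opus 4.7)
The plan is to combine \Cref{Theorem 3} with Slutsky's theorem by showing that replacing the true regression function $\widetilde m$ by the tree predictor $m_{n,1}(\cdot,\mathbf{\Theta}_t,\mathcal{D}_n)$ inside the permutation importance kernel induces only a perturbation that is negligible at the scale of the standardising sequence $s_n:=\sqrt{\gamma_n^2\zeta_{1,\gamma_n}/n+\zeta_{\gamma_n}/M_n}$. Setting $R_{n,M}(j):=I^{OOB}_{n,M}(j)-\widetilde I^{OOB}_{n,M}(j)$, the decomposition
\begin{align*}
\frac{I^{OOB}_{n,M}(j)-I(j)}{s_n}=\frac{\widetilde I^{OOB}_{n,M}(j)-I(j)}{s_n}+\frac{R_{n,M}(j)}{s_n}
\end{align*}
reduces the task to showing $R_{n,M}(j)/s_n=o_P(1)$, because the first summand converges in distribution to $\mathcal{N}(0,1)$ by \Cref{Theorem 3}.

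To control $R_{n,M}(j)$ I would apply the identity $(Y-u)^2-(Y-v)^2=(v-u)(2Y-u-v)$ to each summand. Under (A3), $|\widetilde m|\le K$, and under (A4*), $|\varepsilon|\le K_\varepsilon$, so $|Y_i|\le K+K_\varepsilon$; since every tree prediction is a mean of in-bag responses $Y_i$, the same deterministic bound transfers to $m_{n,1}$. The factor $|2Y-u-v|$ is therefore uniformly bounded by some $C>0$, and a triangle inequality gives
\begin{align*}
|R_{n,M}(j)|\le\frac{C}{\widehat M\gamma_n}\sum_{t=1}^{\widehat M}\sum_{i\in\mathcal{D}_n^{-(t)}}\Bigl(\bigl|m_{n,1}(\mathbf{X}_i^{\pi_{j,t}},\mathbf{\Theta}_t)-\widetilde m(\mathbf{X}_i^{\pi_{j,t}})\bigr|+\bigl|m_{n,1}(\mathbf{X}_i,\mathbf{\Theta}_t)-\widetilde m(\mathbf{X}_i)\bigr|\Bigr).
\end{align*}
Squaring, applying Jensen's inequality to the double average, and exploiting that the $\pi_{j,t}$ are independent of $\mathcal{D}_n$ while the features are i.i.d. uniform (so $\mathbf{X}_i^{\pi_{j,t}}\stackrel{d}{=}\mathbf{X}_1$), one collapses everything to the single-tree $L^2$ error
\begin{align*}
\mathbb{E}\bigl[R_{n,M}(j)^2\bigr]\le C'\,\mathbb{E}\bigl[(m_{n,1}(\mathbf{X},\mathbf{\Theta},\mathcal{D}_{a_n})-\widetilde m(\mathbf{X}))^2\bigr].
\end{align*}

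Under the SID hypothesis (A7), the single-tree convergence rate of \cite{mazumder2024convergence} for CART trees grown to depth $d=\lceil\log_2(n)/(1-\log_2(1-\lambda))\rceil$ provides
\begin{align*}
\mathbb{E}\bigl[(m_{n,1}(\mathbf{X},\mathbf{\Theta},\mathcal{D}_{a_n})-\widetilde m(\mathbf{X}))^2\bigr]\le C''\,\frac{\log(a_n)^2\log(a_np)}{a_n^{\phi(\lambda)}}.
\end{align*}
Combined with the orders of $\zeta_{1,\gamma_n}$ and $\zeta_{\gamma_n}$ established in the appendix in the course of proving \Cref{Theorem 3}, which yield $s_n^2\gtrsim 1/(M_n\gamma_n)$ in the non-degenerate case, Markov's inequality yields
\begin{align*}
\mathbb{P}\bigl(|R_{n,M}(j)|>\eta\,s_n\bigr)\le\frac{\mathbb{E}[R_{n,M}(j)^2]}{\eta^2 s_n^2}\lesssim\frac{M_n\gamma_n\log(a_n)^2\log(a_np)}{a_n^{\phi(\lambda)}}\longrightarrow 0
\end{align*}
by hypothesis, and Slutsky's theorem finishes the proof.

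The principal obstacle is transferring the tree approximation error from a fixed evaluation point to the empirical double sum defining $R_{n,M}(j)$: one has to cope simultaneously with the random index set $\mathcal{D}_n^{-(t)}$, the random $\widehat M$, and the random permutations $\pi_{j,t}$, and must verify that a permuted feature vector behaves distributionally as a fresh draw. A second delicate point is that the whole argument hinges on a uniform deterministic bound for $m_{n,1}$; this is precisely why (A4*) replaces the mere moment condition (A4) used in \Cref{Theorem 3} --- without bounded errors, leaf means are only controlled in expectation, which is insufficient for the factorisation $(Y-u)^2-(Y-v)^2=(v-u)(2Y-u-v)$ to yield a deterministic constant $C$.
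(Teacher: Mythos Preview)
Your proposal is correct and follows essentially the same route as the paper: the same Slutsky decomposition, the same use of (A3) and (A4*) to obtain deterministic bounds on $Y_i$ and $m_{n,1}$, the same reduction to the single-tree $L^2$ error controlled by the Mazumder--Wang rate under (A7), and the same lower bound $s_n^2\gtrsim 1/(M_n\gamma_n)$ extracted from the moment calculations behind \Cref{Theorem 3}. Two small points worth tightening: the Mazumder--Wang bound as cited in the paper is a high-probability statement over the training sample rather than a direct expectation bound, so you should make explicit the routine step of integrating it against the uniform worst-case bound on the complementary event (this is exactly where (A4*) is needed again); and your factorisation $(Y-u)^2-(Y-v)^2=(v-u)(2Y-u-v)$ is a cleaner organisation than the paper's term-by-term expansion, but otherwise the arguments coincide.
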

 \noindent   To our knowledge this is the first CLT for the RFPIM and thus the first mathematical reasoning for 
 the asymptotic normality assumed in \cite{ishwaran2019standard}.\\
    The assumption (A7) is an assumption on the regression function $\widetilde{m}$, that is not very intuitive. Function classes satisfying this condition were investigated
    by \cite{mazumder2024convergence}. Proposition 3.1 of \cite{mazumder2024convergence} tells us that (A7) is fulfilled if $m_1,\ldots,m_p$ are from the Locally Reverse Poincaré Class
    with parameter $\tau$,
    i.e. $m_1,\ldots,m_p$ are differentiable on $(0,1)$ and for any subinterval $[a,b]\subseteq (0,1)$ and
    $$
    \left(\int_a^b\vert m_j'(t)\vert dt\right)^2
    \leq
    \frac{\tau^2}{b-a}\inf_{w\in\mathbb{R}}\int_a^b\vert m_j(t)-w\vert^2dt
    $$
    for every $j\in\{1,\ldots,p\}$. In this case $\lambda=4/p\tau^2$.
    This is fulfilled if for instance each $m_j$ is a piecewise defined function, whoses pieces are polynomial, strictly monotonous functions or smooth and strongly convex.
    
The values for $\lambda$ in the examples given in \cite{mazumder2024convergence} are rather small. Thus the desired depth $d$ is close to $\log_2(n)$ which corresponds to the depth of fully grown binary tree. The assumption of $$\frac{M_n\gamma_n\log(a_n)^2\log(a_np)}{a_n^{\phi(\lambda)}}\rightarrow 0$$ as $n\rightarrow \infty$ is 
quite restrictive and is not fulfilled by typical choices like $a_n\approx 0.632\cdot n$. Our assumption is for example fulfilled by choices like $M_n=\lfloor\log(n)^{q_1}\rfloor$ and
$\gamma_n=\lfloor n-\log(n)^{q_2}\rfloor$ for some arbitrary $q_1,q_2>0$.
\section{Simulation Study}
In this section, we conduct a simulation study to evaluate the practical relevance of our central limit theorem. Specifically, we compare the empirical distribution of RFPIM under various conditions. Furthermore, we examine the robustness of our assumptions by analyzing how the distribution of RFPIM changes when these assumptions are violated. All simulations are performed using R version 4.4.1 \cite{R}.

\subsection{Simulation settings}
For each of the $N=1000$ simulation runs we generate $n=2500$ i.i.d. copies $(\mathbf{X_1},Y_1),\ldots,(\mathbf{X}_n,Y_n)$ of $(\mathbf{X},Y)$ following Model  (\ref{Modellgleichunf}).
The sample size equals the sample size in \cite{ishwaran2019standard}, who already illustrated via simulation, that the distribution of RFPIM can often be approximated by a normal distribution.
We choose $p=10$, i.e. $\mathbf{X}\sim\mathcal{U}\left([0,1]^{10}\right)$.

The Random Forest algorithm is implemented in R using the $\texttt{ranger}$ package \cite{ranger}. For each $k = 1, \ldots, N$, we train a Random Forest model with $M = 500$ trees and a subsample size of $a_n = \lfloor 2/3 \cdot n \rfloor$. The parameter $m_{try}$ is set to the default value provided by the $\texttt{ranger}$ package. Using the trained Random Forest, we compute the RFPIM $I_{n,\widehat M}^{OOB}(j)$ for $j = 1, \ldots, 10$, and denote the results from the $k$-th simulation run by $I_{n,\widehat M}^{OOB,k}(j)$. Finally, we compare the empirical quantiles of the standardized values $I_{n,\widehat M}^{OOB,1}(j), \ldots, I_{n,\widehat M}^{OOB,N}(j)$ with the quantiles of the standard normal distribution using QQ-plots. To investigate robustness of our results to derivations from our assumptions, 
we vary the error distribution, the regression function $\widetilde m$, and whether permutations or derrangements are used for the feature importance as described below.\\
\textit{Error Distribution}.
In \Cref{Theorem 4} we require (A4*), which is that the error terms are bounded. This is fulfilled for a truncated normal distribution. We use the R function \texttt{rtrunc} from the package \texttt{truncdist} (\cite{truncdist})
to generate the truncated normal distributed error terms. We choose $-2$ as lower bound and $2$ as upper bound. The mean is set to zero and the standard deviation is set to one for the truncated normal distribution.
To test how crucial this stricter version of (A4) is, we also use standard normal distributed error terms fulfilling (A4), but not (A4*).\\ 
\textit{Regression Functions}.
For the regression function $\widetilde m:[0,1]^p\longrightarrow \mathbb{R}$ our theory needs an additive structure (A2). We use a total of four different regression functions, where the first fulfills (A2) and the other three do not.
The first two regression functions are inspired by \cite{BurimundMarkus}, while the other non-additive functions are taken from \cite{ishwaran2019standard}. 
In contrast to \cite{ishwaran2019standard} we use less noisy features in oder to stay consistent with the first function fulfilling (A2). The regression functions are specified as follows:
\begin{enumerate}
    \item First, we use a polynomial regression function 
    that satisfies (A2):
    \begin{align*}
         \widetilde m(\mathbf{X})=2 X^{(1)} + 4 \left(X^{(2)}\right)^2 + 2 \left(X^{(3)}\right)^3 - 3\left(X^{(4)}\right)^4 + \left(X^{(5)}\right)^5
    \end{align*}
    \item We investigate a non-additive trigonometric function as link-function that connects $Y$ with a lineare/additive predictor:
    \begin{align*}
          \widetilde m(\mathbf{X})=\sin\left(2 X^{(1)} + 4 X^{(2)} + 2 X^{(3)} - 3X^{(4)} + X^{(5)}\right)
    \end{align*}
     \item We use a function with a trigonometric and a polynomial part:
    \begin{align*}
          \widetilde m(\mathbf{X})=10\sin\left(\pi X^{(1)} X^{(2)}\right) + 20\left(X^{(3)} - 0.5\right)^2
          + 10 X^{(4)} + 5 X^{(5)}
    \end{align*}
    \item Finally, we consider a function with three interaction effects:
    \begin{align*}
           \widetilde m(\mathbf{X})=X^{(1)}X^{(2)} + \left(X^{(3)}\right)^2 - X^{(4)} X^{(7)} +X^{(8)}X^{(10)} -\left(X^{(6)}\right)^2
    \end{align*}
\end{enumerate}
\textit{Permutation and Derrangement}. Our assumption (A1) restricts ourselves to derangements, which is needed for technical reasons and has previously been assumed in \cite{BurimundMarkus} to prove consisteny of the RFPIM. To get an impression whether this is actually needed, we consider both scenarios and run the simulation under ordinary permutations and derangements.

\subsection{Simulation Results}
In \Cref{Plot1}, we present the results for each of the four regression functions, focusing on the non-noise features, i.e., those where \( I(j) > 0 \). Under the assumption (A2), this corresponds to the condition \( \mathbb{V}\text{ar}(m_j(X_1^{(j)})) > 0 \) as stated in \Cref{Theorem 3} and \Cref{Theorem 4}.
We begin by examining potential violations of the additive structure of the regression function as per assumption (A2). The first plot in \Cref{Plot1} is the only one where all the assumptions of \Cref{Theorem 4} hold. Here, we observe that the quantiles from the simulation closely align with the quantiles of a normal distribution, suggesting that the sample size of \( n = 2500 \) and the number of trees \( M = 500 \) are sufficient for a good approximation.
For the second and third function , the normal distributions also appears to fit the distribution of the standardized RFPIM well, with the quantiles aligning closely. However, for the fourth function, the interaction effects introduce severe discrepancies. In this case, the normal distribution exhibits heavier tails than the RFPIM distribution, indicating a potential violation of the normality assumption.
The usage of arbitrary permutations instead of derangements does not change the QQ-Plots much as we can see in \Cref{Plot 2}. One could even argue, that the empirical quantiles fit slightly better to the normal distributions compared to \Cref{Plot1}.

\begin{figure}[h]
    \includegraphics[width=1\linewidth]{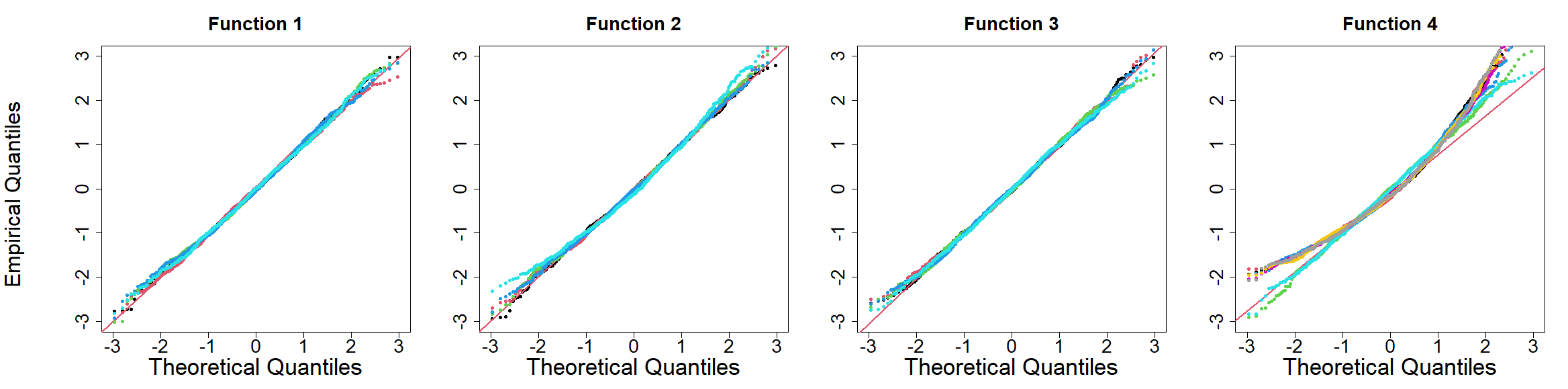}
    \caption{QQ-Plots for truncated normal errors and derrangements under the four different regression models.}
    \label{Plot1}
\end{figure}

\newpage

\begin{figure}[h]
    \centering
    \includegraphics[width=1\linewidth]{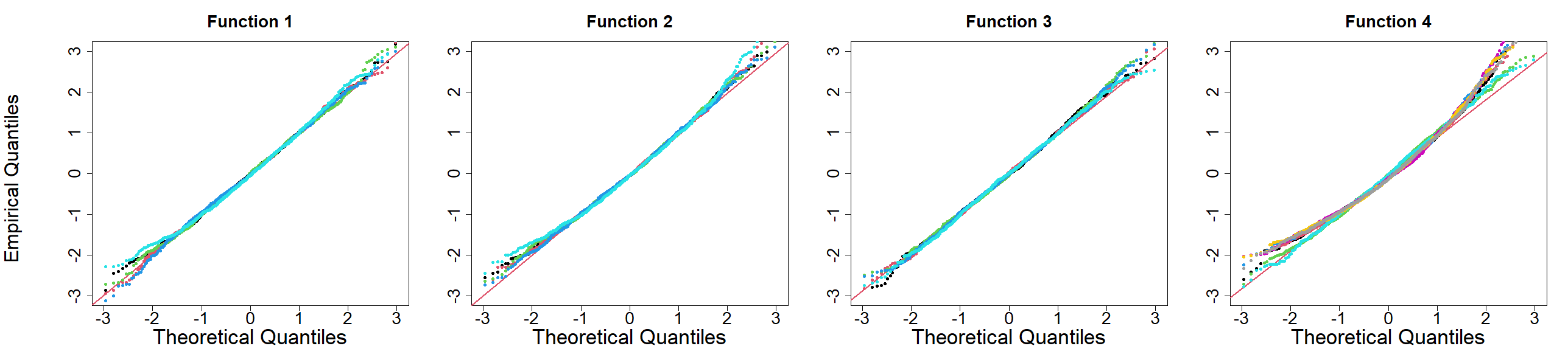}
    \caption{QQ-Plots for truncated normal errors and full permutation under the four different regression models.}
    \label{Plot 2}
\end{figure}

\begin{figure}[h]
    \centering
    \includegraphics[width=1\linewidth]{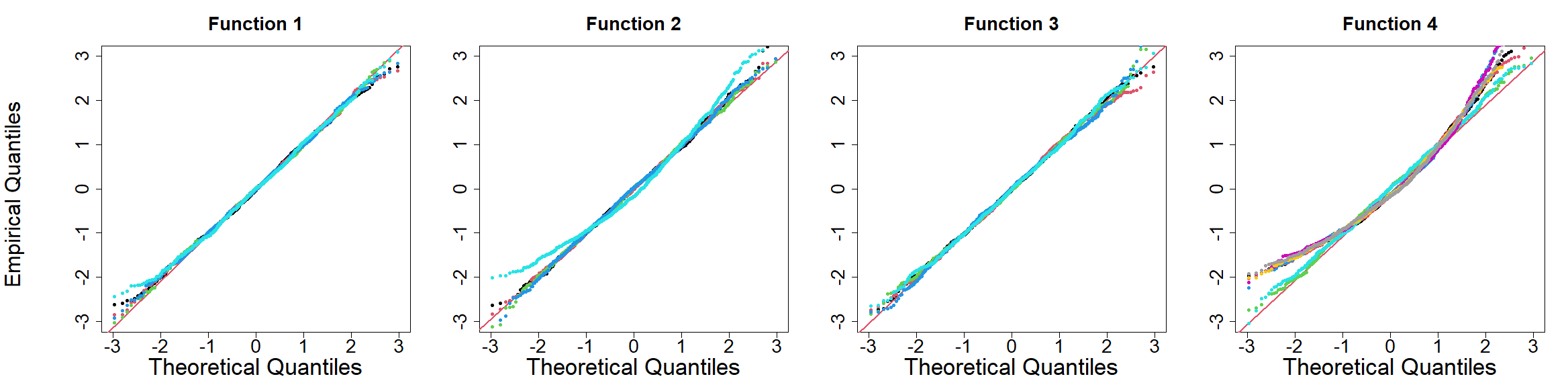}
    \caption{QQ-Plots for normal errors and derrangements under the four different regression models.}
    \label{Plot 3}
\end{figure}

\noindent Thus the simulation suggests, that (A1) is not that crucial for the CLT given in \Cref{Theorem 4}.
The results for normal (instead of truncated normal) errors combined with derangements are given in \Cref{Plot 3}. We again observe some deviation from the normal distribution for the fourth function and now also for the second one. For the two other regression functions, asymptotic normality again seems to be plausible. 

\section{Discussion}

In this paper, we employ $U$-statistics theory to establish, to the best of our knowledge, the first Central Limit Theorem for the Random Forest Permutation Importance (RFPIM). This result provides a mathematical guarantee that the standardized RFPIM is asymptotically normally distributed, supporting the assumption made by  \cite{ishwaran2019standard}.
Our theory applies to all bounded and additive regression functions whose components satisfy the 'Sufficient Impurity Decrease' assumption
and bounded errors. This includes smooth and convex functions, piecewise polynomials, and piecewise monotone functions. However, the CLT needs a few other technical restrictions that deviate from the original Random Forest framework: we select the number of trees at random, to fit in the $U$-statistics framework of \cite{peng2022rates}.  
 Another restriction is in the choice of hyperparameters because we need a huge sample size of the bootstrap sample in the bootstrap step of the Random Forests in comparison to 
the Out-Of-Bag sample, to control the approximation errors of the individual trees. Moreover, as in the consistency proofs of the RFPIM in \cite{BurimundMarkus}, we restrict the results to derangements instead of all permutations. 

In a simulation study we investigated how the asymptotic distribution behaves if these assumptions are violated. It turned out that the restriction to derangements do not seem to have a huge impact on the asympotic RFPIM distribution. The same holds true for the assumption of bounded error terms. However, the additive structure of the regression function turned out to be important, since asymptotic normality does not seem to hold if there are multiple multiplicative effects. The result nevertheless 
delivers an important first step to the theoretical validation of the confidence intervals proposed in  \cite{ishwaran2019standard}. 

\newpage


\begin{appendix}
\section{Proof of Theorem 3}\label{appA}
\begin{lemma}\label{Lemma 1}
    Let (A1)-(A4) be fulfilled. If $\mathbb{V}\text{ar}\left(m_j(X_1^{(j)})\right)>0$, then there is a $\widetilde\zeta>0$, such that
    $\gamma_n^2\zeta_{1,\gamma_n}\longrightarrow \widetilde\zeta>0$ as $n\rightarrow\infty$.
\end{lemma}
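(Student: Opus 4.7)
\textbf{Proof plan for Lemma \ref{Lemma 1}.}

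The plan is to compute $\mathbb{E}[h_{\gamma_n}\mid \mathbf{Z}_1]$ explicitly using the additive and independence assumptions, and to observe that under a derangement $\pi$ exactly two of the $\gamma_n$ summands of the kernel depend on $\mathbf{Z}_1$, so that $\gamma_n^2\zeta_{1,\gamma_n}$ collapses to an explicit positive constant that does not depend on $n$ at all.

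First I would expand the kernel. By (A2) and $Y_k = \widetilde m(\mathbf{X}_k)+\varepsilon_k$, the only coordinate that changes under the permutation is the $j$-th one, so with $\Delta_k := m_j(X_{\pi(k)}^{(j)})-m_j(X_k^{(j)})$ one has
\begin{align*}
h_{\gamma_n}(\mathbf{Z}_1,\dots,\mathbf{Z}_{\gamma_n};\pi)
=\frac{1}{\gamma_n}\sum_{k=1}^{\gamma_n}\bigl(-2\varepsilon_k\Delta_k+\Delta_k^2\bigr).
\end{align*}
Next I would take the conditional expectation given $\mathbf{Z}_1$. Write $\mu_m:=\mathbb{E}[m_j(X^{(j)})]$, $\sigma_m^2:=\mathrm{Var}(m_j(X^{(j)}))$, and $V_1:=m_j(X_1^{(j)})-\mu_m$. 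Assumption (A1) forces $\pi(1)\neq 1$ and there is a unique $k^\ast=\pi^{-1}(1)\neq 1$. Using independence of the features and of $\varepsilon_k$ from $\mathbf{X}$, only the summands $k=1$ and $k=k^\ast$ contribute a term depending on $\mathbf{Z}_1$, and one obtains
\begin{align*}
\mathbb{E}\bigl[-2\varepsilon_1\Delta_1+\Delta_1^2\,\big|\,\mathbf{Z}_1\bigr]
&=2\varepsilon_1 V_1+V_1^2+\sigma_m^2,\\
\mathbb{E}\bigl[-2\varepsilon_{k^\ast}\Delta_{k^\ast}+\Delta_{k^\ast}^2\,\big|\,\mathbf{Z}_1\bigr]
&=V_1^2+\sigma_m^2,
\end{align*}
while each of the remaining $\gamma_n-2$ summands contributes the constant $2\sigma_m^2$ (these $\mathbf{Z}_1$-independent values are the same for every derangement, so the answer does not depend on how $\pi$ is drawn from $\mathcal{V}_n$). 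Summing yields
\begin{align*}
\mathbb{E}[h_{\gamma_n}\mid\mathbf{Z}_1]
=\frac{2}{\gamma_n}\bigl(\varepsilon_1 V_1+V_1^2\bigr)+c_{\gamma_n},
\end{align*}
with $c_{\gamma_n}$ a deterministic constant (not contributing to the variance).

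Taking variance and using $\varepsilon_1\perp V_1$ with $\mathbb{E}[\varepsilon_1]=0$ then gives
\begin{align*}
\gamma_n^2\zeta_{1,\gamma_n}
=4\,\mathrm{Var}\bigl(\varepsilon_1 V_1+V_1^2\bigr)
=4\bigl(\sigma^2\sigma_m^2+\mathrm{Var}(V_1^2)\bigr)
=:\widetilde\zeta,
\end{align*}
where the cross-covariance $\mathrm{Cov}(\varepsilon_1 V_1,V_1^2)=\mathbb{E}[\varepsilon_1]\,\mathbb{E}[V_1^3]=0$. The quantity $\widetilde\zeta$ is finite by (A3) and (A4), and strictly positive because $\sigma^2>0$ and by assumption $\sigma_m^2=\mathrm{Var}(m_j(X_1^{(j)}))>0$, so $\widetilde\zeta\ge 4\sigma^2\sigma_m^2>0$. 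Since this identity holds for every $n$, convergence is immediate.

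The main obstacle is bookkeeping rather than analysis: one has to check carefully that under the derangement constraint in (A1) the only $\mathbf{Z}_1$-dependent contributions come from the indices $k=1$ and $k=\pi^{-1}(1)$, and that their contributions are independent of the specific values of these indices and of the law of $\pi$ on $\mathcal{V}_n$. Once this is settled, the factor $(2/\gamma_n)^2$ from the two surviving terms cancels precisely against $\gamma_n^2$, producing a non-degenerate limit.
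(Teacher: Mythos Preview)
Your argument is correct and takes a cleaner route than the paper. The paper expands $\mathbb{E}[h_{\gamma_n}\mid\mathbf{Z}_1]$ by averaging over the random derangement term by term (via identities such as $\mathbb{P}(\pi(i)=1)=1/(\gamma_n-1)$), which generates a long list of summands with coefficients $1/\gamma_n$, $1/(\gamma_n-1)$, $1/(\gamma_n(\gamma_n-1))$, and then extracts the limit by dominated convergence. Your key simplification is to freeze $\pi$ first: since every derangement has exactly one index $k=1$ and exactly one index $k=\pi^{-1}(1)$, and the conditional contributions of these two indices do not depend on \emph{which} derangement is chosen, $\mathbb{E}[h_{\gamma_n}\mid\mathbf{Z}_1,\pi]$ is constant in $\pi$ and the outer average over $\pi$ is trivial. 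This yields the exact identity $\gamma_n^2\zeta_{1,\gamma_n}=4\,\mathrm{Var}(\varepsilon_1 V_1+V_1^2)$ for every $n$, which is stronger than the asymptotic statement the lemma asks for, and it also shows the conclusion holds for any distribution of $\pi$ supported on $\mathcal{V}_n$---a robustness that the paper's calculation does not make visible. Your positivity argument via $\widetilde\zeta\ge 4\sigma^2\sigma_m^2$ parallels the paper's lower bound through the $m_j(X_1^{(j)})\varepsilon_1$ term.
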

\begin{proof}
We start with 
\begin{align}
        &\mathbb{E}\left[
h|\mathbf{Z}_1
\right]
=
 \mathbb{E}\left[
\frac{1}{\gamma_n}\nonumber
    \sum_{i=1}^{\gamma_n}
    \Big\{
        \big(
        Y_{i}-\widetilde{m}
        (\textbf{X}_{i}^{\pi_{j,t}})
        \big)^2
        -
        \big(
        Y_{i}-\widetilde{m}
        (\textbf{X}_{i})
        \big)^2
    \Big\}\bigg|\mathbf{Z}_1
\right]\\
&=
\frac{1}{\gamma_n}
    \sum_{i=1}^{\gamma_n}
 \mathbb{E}\left[
        \big(
        Y_{i}-\widetilde{m}
        (\textbf{X}_{i}^{\pi_{j,t}})
        \big)^2
        \bigg|\mathbf{Z}_1
\right]
        -
        \frac{1}{\gamma_n}
    \sum_{i=1}^{\gamma_n}
         \mathbb{E}\left[
        \big(
        Y_{i}-\widetilde{m}
        (\textbf{X}_{i})
        \big)^2
    \bigg|\mathbf{Z}_1
\right]. \label{Lemma 1 h.bed.erw}
\end{align}
The second term can be rewritten as
\begin{align}
       &\frac{1}{\gamma_n}\nonumber
    \sum_{i=1}^{\gamma_n}
         \mathbb{E}\left[
        \big(
        Y_{i}-\widetilde{m}
        (\textbf{X}_{i})
        \big)^2
    \bigg|\mathbf{Z}_1
\right]\\
&=
 \frac{1}{\gamma_n}
 \mathbb{E}\left[
        \big(
        Y_{1}-\widetilde{m}
        (\textbf{X}_{1})
        \big)^2
    \bigg|\mathbf{Z}_1
\right]
+
       \frac{1}{\gamma_n}\nonumber
    \sum_{i=2}^{\gamma_n}
         \mathbb{E}\left[
        \big(
        Y_{i}-\widetilde{m}
        (\textbf{X}_{i})
        \big)^2
    \bigg|\mathbf{Z}_1
\right]\\
&=
 \frac{1}{\gamma_n}
        \big(
        Y_{1}-\widetilde{m}
        (\textbf{X}_{1})
        \big)^2
+
       \frac{1}{\gamma_n}\nonumber
    \sum_{i=2}^{\gamma_n}
         \mathbb{E}\left[
        \big(
        Y_{i}-\widetilde{m}
        (\textbf{X}_{i})
        \big)^2
\right]\\
&=
\frac{1}{\gamma_n}
        \varepsilon_1^2
+
       \frac{\gamma_n-1}{\gamma_n}
         \mathbb{E}\left[
        \varepsilon_1^2 \label{lemma 1 2. teil}
\right]
\end{align}
almost surely.
We use (A2) and get
\begin{align*}
    \left(
        Y_i-\widetilde m \left(X_i^{\pi_{j,1}} \right)
    \right)^2
    &=
    \left(
    \sum_{\ell=1}^pm_{\ell}\left(X_i^{(\ell)}\right)+\varepsilon_i
   \! -\!
     \sum_{\ell=1,\ell\neq j}^pm_{\ell}\left(X_i^{(\ell)}\right)
     -
     m_j\left(X_{\pi_j(i)}^{(j)}\right)\!
    \right)^2\\
    &=
    \left(
        m_j\left(X_i^{(j)}\right)+\varepsilon_i-m_j\left(X_{\pi_j(i)}^{(j)}\right)
    \right)^2
\end{align*}    
for $i=1,\ldots ,n$.
Thus, for the first term in (\ref{Lemma 1 h.bed.erw})
we get
\begin{align}
    &\frac{1}{\gamma_n}
    \sum_{i=1}^{\gamma_n}
 \mathbb{E}\left[
        \big(
        Y_{i}-\widetilde{m}
        (\textbf{X}_{i}^{\pi_{j,t}})
        \big)^2
        \bigg|\mathbf{Z}_1
\right]\nonumber\\
&=
\frac{1}{\gamma_n}\nonumber
    \sum_{i=1}^{\gamma_n}
 \mathbb{E}\left[
         \left(
        m_j\left(X_i^{(j)}\right)+\varepsilon_i-m_j\left(X_{\pi_j(i)}^{(j)}\right)
    \right)^2
        \bigg|\mathbf{Z}_1
\right]\\
&=
\frac{1}{\gamma_n}
    \sum_{i=1}^{\gamma_n}
 \mathbb{E}\left[
         \left(
        m_j\left(X_i^{(j)}\right)+\varepsilon_i
    \right)^2
        \bigg|\mathbf{Z}_1
\right]
+
\frac{1}{\gamma_n}
    \sum_{i=1}^{\gamma_n}
 \mathbb{E}\left[
         \left(
        m_j\left(X_{\pi_j(i)}^{(j)}\right)
    \right)^2\nonumber
        \bigg|\mathbf{Z}_1
\right]\\
&+
\frac{2}{\gamma_n}
    \sum_{i=1}^{\gamma_n}
 \mathbb{E}\left[
 m_j\left(X_{\pi_j(i)}^{(j)}\right)
         \left(
        m_j\left(X_i^{(j)}\right)+\varepsilon_i
    \right)
        \bigg|\mathbf{Z}_1
\right]\label{lemma 1 y-pi}
\end{align}
almost surely.
For the first sum we get
\begin{align}
   & \frac{1}{\gamma_n}\nonumber
    \sum_{i=1}^{\gamma_n}
 \mathbb{E}\left[
         \left(
        m_j\left(X_i^{(j)}\right)+\varepsilon_i
    \right)^2
        \bigg|\mathbf{Z}_1
\right]\\
&=
\frac{1}{\gamma_n}
\mathbb{E}\left[
         \left(
        m_j\left(X_1^{(j)}\right)+\varepsilon_1
    \right)^2
        \bigg|\mathbf{Z}_1
\right]
+
    \frac{1}{\gamma_n}
    \sum_{i=2}^{\gamma_n}
 \mathbb{E}\left[
         \left(
        m_j\left(X_i^{(j)}\right)+\varepsilon_i
    \right)^2
        \bigg|\mathbf{Z}_1
\right]\nonumber\\
&=
\frac{1}{\gamma_n}
         \left(
        m_j\left(X_1^{(j)}\right)+\varepsilon_1
    \right)^2
+
    \frac{1}{\gamma_n}
    \sum_{i=2}^{\gamma_n}
 \mathbb{E}\left[
         \left(
        m_j\left(X_i^{(j)}\right)+\varepsilon_i
    \right)^2
\right]\nonumber\\
&=
\frac{1}{\gamma_n}\label{lemma 2 1. binom}
         \left(
        m_j\left(X_1^{(j)}\right)+\varepsilon_1
    \right)^2
+
    \frac{\gamma_n-1}{\gamma_n}
 \mathbb{E}\left[
         \left(
        m_j\left(X_1^{(j)}\right)+\varepsilon_1
    \right)^2
\right]
\end{align}
almost surely.
The law of total expectation delivers
\begin{align}
&\frac{1}{\gamma_n}\nonumber
    \sum_{i=1}^{\gamma_n}
    \mathbb{E}\left[
    m_j\left(X_{\pi(i)}^{(j)}\right)^2\Big|\mathbf{Z}_1
    \right]\\
    &=
      \frac{1}{\gamma_n}
    \sum_{i=1}^{\gamma_n}\nonumber
    \mathbb{P}\left(
        \pi(i)=1
    \right)
    \mathbb{E}\left[
        m_j\left(X_{1}^{(j)}\right)^2\Big|\mathbf{Z}_1
    \right]\\
    &+
    \frac{1}{\gamma_n}
    \sum_{i=1}^{\gamma_n}
     \mathbb{P}\left(
        \pi(i)\neq 1
    \right)
    \mathbb{E}\left[
    m_j\left(X_{2}^{(j)}\right)^2\Big|\mathbf{Z}_1
    \right]
    \nonumber\\
    &=
    \nonumber
    \frac{1}{\gamma_n}
    \sum_{i=1}^{\gamma_n}
    \mathbb{P}\left(
        \pi(i)=1
    \right)
    m_j\left(X_1^{(j)}\right)^2+
    \frac{1}{\gamma_n}\nonumber
    \sum_{i=1}^{\gamma_n}
     \mathbb{P}\left(
        \pi(i)\neq 1
    \right)
    \mathbb{E}\left[
    m_j\left(X_2^{(j)}\right)^2
    \right]
\end{align}

almost surely, where the i.i.d. assumption was used in the first and second equation.
For the remaining part of (\ref{lemma 1 y-pi})
we can see

\begin{align}
 &=
    \frac{1}{\gamma_n-1}\label{lemma 2 binom 2}
    m_j\left(X_1^{(j)}\right)^2
    +
      \frac{\gamma_n -2}{\gamma_n-1}
    \mathbb{E}\left[
    m_j\left(X_1^{(j)}\right)^2
    \right]\\
    &\frac{2}{\gamma_n}
    \sum_{i=1}^{\gamma_n}
 \mathbb{E}\left[
 m_j\left(X_{\pi_j(i)}^{(j)}\right)
         \left(
        m_j\left(X_i^{(j)}\right)+\varepsilon_i
    \right)
        \bigg|\mathbf{Z}_1
\right]\nonumber\\
&=
 \frac{2}{\gamma_n}\nonumber
    \sum_{i=1}^{\gamma_n}
 \mathbb{E}\left[
 m_j\left(X_{\pi_j(i)}^{(j)}\right)
        m_j\left(X_i^{(j)}\right)
        \bigg|\mathbf{Z}_1
\right]
+
 \frac{2}{\gamma_n}\nonumber
    \sum_{i=1}^{\gamma_n}
 \mathbb{E}\left[
 m_j\left(X_{i}^{(j)}\right)\varepsilon_i
        \Big|\mathbf{Z}_1
\right]\\
&=
\frac{2}{\gamma_n}
 \bigg(\nonumber
    \sum_{i=1}^{\gamma_n}
 \mathbb{E}\left[
 m_j\left(X_{\pi_j(i)}^{(j)}\right)
        m_j\left(X_i^{(j)}\right)
        \bigg|\mathbf{Z}_1
\right]
+
 \mathbb{E}\left[
 m_j\left(X_{1}^{(j)}\right)\varepsilon_1
        \Big|\mathbf{Z}_1
\right]\\
&+
 \sum_{i=2}^{\gamma_n}\nonumber
 \mathbb{E}\left[
 m_j\left(X_{i}^{(j)}\right)\varepsilon_i
\right]
\bigg)\\
&=
\frac{2}{\gamma_n}
 \bigg(\nonumber
    \sum_{i=1}^{\gamma_n}
 \mathbb{E}\left[
 m_j\left(X_{\pi_j(i)}^{(j)}\right)
        m_j\left(X_i^{(j)}\right)
        \bigg|\mathbf{Z}_1
\right]
+
 m_j\left(X_{1}^{(j)}\right)\varepsilon_1
\end{align}

\begin{align}
&+
 \sum_{i=2}^{\gamma_n}
 \mathbb{E}\left[
 m_j\left(X_{i}^{(j)}\right)\right] \mathbb{E}\left[\varepsilon_i
\right]
\bigg)\\
&=
 \frac{2}{\gamma_n}
 \bigg(\nonumber
    \sum_{i=2}^{\gamma_n}
 \mathbb{E}\left[
 m_j\left(X_{\pi_j(i)}^{(j)}\right)
        m_j\left(X_i^{(j)}\right)
        \bigg|\mathbf{Z}_1
\right]\\
&
+
\mathbb{E}\left[\nonumber
 m_j\left(X_{\pi_j(1)}^{(j)}\right)
        m_j\left(X_1^{(j)}\right)
        \bigg|\mathbf{Z}_1
\right]+
 m_j\left(X_{1}^{(j)}\right)\varepsilon_1\bigg)\\
&=
 \frac{2}{\gamma_n}\nonumber
 \bigg(
    \sum_{i=2}^{\gamma_n}
 \mathbb{E}\left[
 m_j\left(X_{\pi_j(i)}^{(j)}\right)
        m_j\left(X_i^{(j)}\right)
        \bigg|\mathbf{Z}_1
\right]\\
&+
        m_j\left(X_1^{(j)}\right)
\mathbb{E}\left[\nonumber
 m_j\left(X_{\pi_j(1)}^{(j)}\right)
        \bigg|\mathbf{Z}_1
\right]
+
 m_j\left(X_{1}^{(j)}\right)
\varepsilon_1\bigg)
\end{align}
almost surely. Next we have analogously to (\ref{lemma 2 binom 2})
\begin{align}
    \mathbb{E}\left[
    m_j\left(X_{\pi(1)}^{(j)}\right)\Big|\mathbf{Z}_1
    \right]\nonumber
    &=
    \frac{1}{\gamma_n -1}\sum_{k=2}^{\gamma_n}
     \mathbb{E}\left[
 m_j\left(X_{k}^{(j)}\right)
        \bigg|\mathbf{Z}_1
\right]
+
\frac{1}{\gamma_n-1}
 m_j\left(X_{1}^{(j)}\right)\\
&=
\frac{\gamma_n-2}{\gamma_n -1}\nonumber
     \mathbb{E}\left[
 m_j\left(X_{1}^{(j)}\right)
\right]
+
\frac{1}{\gamma_n-1}
 m_j\left(X_{1}^{(j)}\right)
\end{align}
and analogously for $i\geq 2$
\begin{align}
     &\mathbb{E}\left[
 m_j\left(X_{\pi_j(i)}^{(j)}\right)
        m_j\left(X_i^{(j)}\right)
        \bigg|\mathbf{Z}_1
\right]\nonumber\\
&=
\sum_{k=1,k\neq i}^{\gamma_n}
\mathbb{P}\left(\nonumber
\pi_j(i)=k
\right)
     \mathbb{E}\left[
 m_j\left(X_{k}^{(j)}\right)
        m_j\left(X_i^{(j)}\right)
        \bigg|\mathbf{Z}_1
\right]\\
&=
\frac{1}{\gamma_n\!-\!1}\!\nonumber
\Bigg(
\mathbb{E}\left[
 m_j\left(X_{k}^{(j)}\right)
        m_j\left(X_i^{(j)}\right)
        \bigg|\mathbf{Z}_1
\right]\\
&\!+\!
\sum_{k=2,k\neq i}^{\gamma_n}\!\nonumber
\mathbb{E}\left[
 m_j\left(X_{k}^{(j)}\right)
        m_j\left(X_i^{(j)}\right)
        \bigg|\mathbf{Z}_1
\right]\Bigg)\\
&=
\frac{1}{\gamma_n\!-\!1}\!\nonumber
\Bigg(
 m_j\left(X_{1}^{(j)}\right)
\mathbb{E}\left[
        m_j\left(X_i^{(j)}\right)
        \bigg|\mathbf{Z}_1
\right]+
\sum_{k=2,k\neq i}^{\gamma_n}
\mathbb{E}\left[
 m_j\left(X_{k}^{(j)}\right)
        m_j\left(X_i^{(j)}\right)
\right]\Bigg)\\
&=
\frac{1}{\gamma_n\!-\!1}\!\nonumber
\left(
 m_j\left(X_{1}^{(j)}\right)
\mathbb{E}\left[
        m_j\left(X_i^{(j)}\right)
\right]
+
\sum_{k=2,k\neq i}^{\gamma_n}
\mathbb{E}\left[
 m_j\left(X_{k}^{(j)}\right)\right]\mathbb{E}\left[
        m_j\left(X_i^{(j)}\right)
\right]\right)\\
&=
\frac{1}{\gamma_n\!-\!1}\nonumber
\left(
 m_j\left(X_{1}^{(j)}\right)
\mathbb{E}\left[
        m_j\left(X_i^{(j)}\right)
\right]
+
(\gamma_n-2)
\mathbb{E}\left[
       m_j\left(X_1^{(j)}\right)
\right]^2\right)\\
&=
\frac{1}{\gamma_n-1}
     \mathbb{E}\left[
       m_j\left(X_1^{(j)}\right)
\right]
\nonumber
m_j\left(X_1^{(j)}\right)
+
\frac{\gamma_n\!-\!2}{\gamma_n\!-\!1}
\mathbb{E}\left[
       m_j\left(X_1^{(j)}\right)
\right]^2.
\end{align}
Hence,
\begin{align}
      &\frac{2}{\gamma_n}
    \sum_{i=2}^{\gamma_n}
 \mathbb{E}\left[
 m_j\left(X_{\pi_j(i)}^{(j)}\right)
         \left(
        m_j\left(X_i^{(j)}\right)+\varepsilon_i
    \right)
        \bigg|\mathbf{Z}_1
\right]\nonumber\\
&=
\frac{2}{\gamma_n}\bigg(
     \mathbb{E}\left[
       m_j\left(X_1^{(j)}\right)
\right]
\nonumber
m_j\left(X_1^{(j)}\right)
+
(\gamma_n-2)
\mathbb{E}\left[
       m_j\left(X_1^{(j)}\right)
\right]^2\\
&
+
    \frac{\gamma_n\!-\!2}{\gamma_n \!-\!1}
m_j\left(X_1^{(j)}\right)
\mathbb{E}\left[
m_j\left(X_1^{(j)}\right)
\right]+
\frac{1}{\gamma_n \!-\!1}
 m_j\left(X_{1}^{(j)}\right)^2\!
+
\! m_j\left(X_{1}^{(j)}\right)
\varepsilon_1
\bigg)\label{lemma 1 binom 3}
\end{align}
holds true.
Combining (\ref{lemma 1 y-pi}),(\ref{lemma 2 1. binom}),
(\ref{lemma 2 binom 2}) and (\ref{lemma 1 binom 3}) leads to
\begin{align}
   & \frac{1}{\gamma_n}
    \sum_{i=1}^{\gamma_n}
 \mathbb{E}\left[
        \big(
        Y_{i}-\widetilde{m}
        (\textbf{X}_{i}^{\pi_{j,t}})
        \big)^2
        \bigg|\mathbf{Z}_1
\right]\nonumber\\
&=
\nonumber
\frac{1}{\gamma_n}
         \left(
        m_j\left(X_1^{(j)}\right)+\varepsilon_1
    \right)^2
+
    \frac{\gamma_n-1}{\gamma_n}
 \mathbb{E}\left[
         \left(
        m_j\left(X_1^{(j)}\right)+\varepsilon_1
    \right)^2
\right]\\
&+
\frac{1}{\gamma_n-1}
    m_j\left(X_1^{(j)}\right)^2
    +
      \frac{\gamma_n -2}{\gamma_n-1}
    \mathbb{E}\left[
    m_j\left(X_1^{(j)}\right)^2
    \right]\nonumber\\
    &+
    \frac{2}{\gamma_n}
     \mathbb{E}\left[
       m_j\left(X_1^{(j)}\right)
\right]
\nonumber
m_j\left(X_1^{(j)}\right)
+
2\frac{\gamma_n-2}{\gamma_n}
\mathbb{E}\left[
       m_j\left(X_1^{(j)}\right)
\right]^2\\
&+
2m_j\left(X_1^{(j)}\right)
\frac{\gamma_n-2}{\gamma_n(\gamma_n -1)}
     \mathbb{E}\left[\nonumber
 m_j\left(X_{1}^{(j)}\right)
\right]\\
&+
\frac{2}{\gamma_n(\gamma_n-1)}
m_j\left(X_1^{(j)}\right)^2
+
\frac{2}{\gamma_n}
 m_j\left(X_{1}^{(j)}\right)
\varepsilon_1. \label{lemma1 1.teil}
\end{align}
Note that there are several constant summands in (\ref{lemma1 1.teil}), that
vanish inside the variance. Hence (\ref{Lemma 1 h.bed.erw}), (\ref{lemma 1 2. teil}) and (\ref{lemma1 1.teil})
result in
\begin{align*}
\zeta_{1,\gamma_n}=&
\mathbb{V}\text{ar}\left(\mathbb{E}\left[
h|\mathbf{Z}_1
\right]\right)
=
\mathbb{V}\text{ar}\bigg(
\frac{1}{\gamma_n}
         \left(
        m_j\left(X_1^{(j)}\right)+\varepsilon_1
    \right)^2
    +
    \frac{1}{\gamma_n-1}
    m_j\left(X_1^{(j)}\right)^2\\
    &
+
\frac{2}{\gamma_n}
     \mathbb{E}\left[
       m_j\left(X_1^{(j)}\right)
\right]
\nonumber
m_j\left(X_1^{(j)}\right)
\frac{2(\gamma_n-2)}{\gamma_n(\gamma_n -1)}
m_j\left(X_1^{(j)}\right)
     \mathbb{E}\left[
 m_j\left(X_{1}^{(j)}\right)
\right]\\
&+
\frac{2}{\gamma_n(\gamma_n-1)}
m_j\left(X_1^{(j)}\right)^2
+
\frac{2}{\gamma_n}
 m_j\left(X_{1}^{(j)}\right)
\varepsilon_1
-\frac{1}{\gamma_n}\varepsilon_1^2
\bigg),
\end{align*}
which implies with (A3), (A4) and the dominated convergence theorem,
that $\gamma_n^2\zeta_{1,\gamma_n}$ is
asymptotically 
equivalent to
\begin{align*}
    &\mathbb{V}\text{ar}\bigg(
         \left(
        m_j\left(X_1^{(j)}\right)+\varepsilon_1
    \right)^2
    +
    m_j\left(X_1^{(j)}\right)^2
+
4m_j\left(X_1^{(j)}\right)
     \mathbb{E}\left[
 m_j\left(X_{1}^{(j)}\right)
\right]\\
&+
 2m_j\left(X_{1}^{(j)}\right)
\varepsilon_1
-\varepsilon_1^2
\bigg)\\
&=
    \mathbb{V}\text{ar}\bigg(
   2 m_j\left(X_1^{(j)}\right)^2
+
4m_j\left(X_1^{(j)}\right)
     \mathbb{E}\left[
 m_j\left(X_{1}^{(j)}\right)
\right]
+
4 m_j\left(X_{1}^{(j)}\right)
\varepsilon_1
\bigg)\\
&=
      \mathbb{V}\text{ar}\bigg(
     m_j\left(X_1^{(j)}\right) \left(
   2 m_j\left(X_1^{(j)}\right)
+
4
     \mathbb{E}\left[
 m_j\left(X_{1}^{(j)}\right)
\right]
\right)
+
4 m_j\left(X_{1}^{(j)}\right)
\varepsilon_1
\bigg)=:\widetilde\zeta.\\
\end{align*}
Due to the independence of $\varepsilon_1$ and $\mathbf{X}_1$,
\begin{align*}
    &\mathbb{C}\text{ov}\left(
     m_j\left(X_1^{(j)}\right) \left(
   2 m_j\left(X_1^{(j)}\right)
+
4
     \mathbb{E}\left[
 m_j\left(X_{1}^{(j)}\right)
\right]
\right)
    ,
    4 m_j\left(X_{1}^{(j)}\right)
\varepsilon_1
    \right)\\
    &=
    \mathbb{E}\left[
      4m_j\left(X_1^{(j)}\right)^2 \left(
   2 m_j\left(X_1^{(j)}\right)
+
4
     \mathbb{E}\left[
 m_j\left(X_{1}^{(j)}\right)
\right]
\right)\varepsilon_1
    \right]\\
    &=
 \mathbb{E}\left[
      4m_j\left(X_1^{(j)}\right)^2 \left(
   2 m_j\left(X_1^{(j)}\right)
+
4
     \mathbb{E}\left[
 m_j\left(X_{1}^{(j)}\right)
\right]
\right)\right]
\mathbb{E}\left[\varepsilon_1
    \right]=0
\end{align*}
holds true and hence
\begin{align*}
    \widetilde \zeta
    &=
     \mathbb{V}\text{ar}\bigg(
     m_j\left(X_1^{(j)}\right) \left(
   2 m_j\left(X_1^{(j)}\right)
+
4
     \mathbb{E}\left[
 m_j\left(X_{1}^{(j)}\right)
\right]
\right)
+
4 m_j\left(X_{1}^{(j)}\right)
\varepsilon_1
\bigg)\\
&=
 \mathbb{V}\text{ar}\bigg(
     m_j\left(X_1^{(j)}\right) \left(
   2 m_j\left(X_1^{(j)}\right)
\!+\!
4
     \mathbb{E}\left[
 m_j\left(X_{1}^{(j)}\right)
\right]
\right)
\bigg)
\!+\!
 \mathbb{V}\text{ar}\bigg(
4 m_j\left(X_{1}^{(j)}\right)
\varepsilon_1
\bigg)\\
&\geq
 16\mathbb{V}\text{ar}\bigg(
m_j\left(X_{1}^{(j)}\right)
\varepsilon_1
\bigg)
=
16\mathbb{E}\left[
m_j\left(X_1^{(j)}\right)^2
\varepsilon_1^2
\right]=16\sigma^2
\mathbb{E}\left[
m_j\left(X_1^{(j)}\right)^2
\right]>0,
\end{align*}
which completes the proof.
\end{proof}

\begin{lemma}\label{Lemma 2}
    Let (A3) and (A4) be fulfilled. Then there is a $\widetilde K>0$, such that $\zeta_{\gamma_n}=\mathbb{V}\text{ar}(h)<\widetilde K$ for all $n\in\mathbb{N}$.
\end{lemma}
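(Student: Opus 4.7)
The statement is a uniform second moment bound, so the natural plan is to estimate $\mathrm{Var}(h)\le\mathbb{E}[h^2]$ and show that $\mathbb{E}[h^2]$ does not blow up in $\gamma_n$. Since $h$ is a normalized average
$$h=\frac{1}{\gamma_n}\sum_{i=1}^{\gamma_n} g_i,\qquad g_i:=(Y_i-\widetilde m(\mathbf{X}_i^{\pi_{j,t}}))^2-(Y_i-\widetilde m(\mathbf{X}_i))^2,$$
Jensen's (or Cauchy–Schwarz) inequality gives $h^2\le\gamma_n^{-1}\sum_{i=1}^{\gamma_n} g_i^2$, and by exchangeability of the $g_i$ (the indices $i$ play symmetric roles under the i.i.d.\ assumption on $\mathbf{Z}_i$ and the uniform distribution of $\pi_{j,t}$ on $\mathcal V_n$) one has $\mathbb{E}[h^2]\le\mathbb{E}[g_1^2]$. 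So the whole task reduces to a uniform-in-$n$ bound on $\mathbb{E}[g_1^2]$, which notably does not involve $\gamma_n$ at all.

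To bound $g_1$ pointwise, I would use the factorization $a^2-b^2=(a-b)(a+b)$ with $a=Y_1-\widetilde m(\mathbf{X}_1^{\pi_{j,1}})$ and $b=Y_1-\widetilde m(\mathbf{X}_1)$. Then $a-b=\widetilde m(\mathbf{X}_1)-\widetilde m(\mathbf{X}_1^{\pi_{j,1}})$, whose absolute value is at most $2K$ by (A3). On the other side, $|a+b|\le 2|Y_1|+2K\le 2|\widetilde m(\mathbf{X}_1)|+2|\varepsilon_1|+2K\le 4K+2|\varepsilon_1|$. Hence
$$|g_1|\le 2K\bigl(4K+2|\varepsilon_1|\bigr)=8K^2+4K|\varepsilon_1|,$$
so that $g_1^2\le 2(8K^2)^2+2(4K)^2\varepsilon_1^2=128K^4+32K^2\varepsilon_1^2$ (via $(x+y)^2\le 2x^2+2y^2$).

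Taking expectations, (A4) — actually even the weaker $\mathbb{E}[\varepsilon_1^2]=\sigma^2<\infty$ — yields
$$\mathbb{E}[g_1^2]\le 128K^4+32K^2\sigma^2=:\widetilde K,$$
and the chain $\mathrm{Var}(h)\le\mathbb{E}[h^2]\le\mathbb{E}[g_1^2]\le\widetilde K$ closes the proof with a constant that depends only on $K$ and the error variance, neither of which varies with $n$.

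There is no real obstacle: the additive structure (A2) is not needed here, and the randomness of the permutation $\pi_{j,1}$ is handled simply by taking the pointwise bound before expectations, so that the conditional law of $\mathbf{X}_1^{\pi_{j,1}}$ never has to be described explicitly. The only mild care is making sure the exchangeability argument goes through with the permutation attached to the kernel, which it does since the joint law of $(\mathbf{Z}_1,\dots,\mathbf{Z}_{\gamma_n},\pi_{j,t})$ is invariant under re-indexing of the observations.
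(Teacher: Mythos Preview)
Your argument is correct. It differs from the paper's route in one key step: after the same Jensen bound $\mathbb{E}[h^2]\le\gamma_n^{-1}\sum_i\mathbb{E}[g_i^2]$, the paper applies $(a-b)^2\le 2a^2+2b^2$ to split $g_i^2$ into $2(Y_i-\widetilde m(\mathbf{X}_i^{\pi_{j,t}}))^4+2\varepsilon_i^4$, then invokes the additive decomposition (A2) to rewrite the first term as $(m_j(X_i^{(j)})+\varepsilon_i-m_j(X_{\pi_j(i)}^{(j)}))^4$ and expands via the binomial formula, ultimately bounding by $16K^4+24\sigma^2K^2+\mathbb{E}[\varepsilon_1^4]$. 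Your factorization $g_i=(a-b)(a+b)$ instead isolates $a-b=\widetilde m(\mathbf{X}_i)-\widetilde m(\mathbf{X}_i^{\pi_{j,t}})$, which is bounded by $2K$ \emph{pointwise}, so that $|g_i|$ is at most linear in $|\varepsilon_i|$ and $g_i^2$ at most quadratic. This buys you two things: (A2) is never used (matching the lemma's stated hypotheses, which only list (A3)--(A4)), and only $\mathbb{E}[\varepsilon_1^2]=\sigma^2$ enters the final bound rather than the fourth moment required by (A4). The exchangeability remark is fine but not even needed, since your pointwise bound $g_i^2\le 128K^4+32K^2\varepsilon_i^2$ already gives the same constant for every $i$.
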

\begin{proof}
    At first we apply Jensens inequality for sums and by noting that $(a+b)^2\leq2a^2+2b^2$ for all $a,b\in\mathbb{R}$ we recieve
    \begin{align*}
            \mathbb{E}\left[h_{\gamma_n}^2\right]
            &=
            \mathbb{E}\left[\left( 
            \frac{1}{\gamma_n}\sum_{i=1}^{\gamma_n}
    \Big\{
        \big(
        Y_{i}-\widetilde{m}
        (\textbf{X}_{i}^{\pi_{j,t}})
        \big)^2
        -
        \big(
        Y_{i}-\widetilde{m}
        (\textbf{X}_{i})
        \big)^2
    \Big\}\right)^2\right]\\
    &\leq
    \frac{1}{\gamma_n}\sum_{i=1}^{\gamma_n}
     \mathbb{E}\left[
    \left( \Big\{
        \big(
        Y_{i}-\widetilde{m}
        (\textbf{X}_{i}^{\pi_{j,t}})
        \big)^2
        -
        \big(
        Y_{t_i}-\widetilde{m}
        (\textbf{X}_{i})
        \big)^2
    \Big\}\right)^2\right]\\
    &\leq
     \frac{2}{\gamma_n}\sum_{i=1}^{\gamma_n}
     \mathbb{E}\left[
        \big(
        Y_{i}-\widetilde{m}
        (\textbf{X}_{i}^{\pi_{j,t}})
        \big)^4\right]
        +
        \frac{2}{\gamma_n}\sum_{i=1}^{\gamma_n}
     \mathbb{E}\left[
        \big(
        Y_{i}-\widetilde{m}
        (\textbf{X}_{i})
        \big)^4\right]\\
&=
   \frac{2}{\gamma_n}\sum_{i=1}^{\gamma_n}
     \mathbb{E}\left[
         \left(
        m_j\left(X_i^{(j)}\right)+\varepsilon_i-m_j\left(X_{\pi_j(i)}^{(j)}\right)
    \right)^4\right]
    +
       \frac{2}{\gamma_n}\sum_{i=1}^{\gamma_n}
     \mathbb{E}\left[
        \varepsilon_i
        ^4\right].
    \end{align*}
    Because of (A4), it's sufficient to show that for all $i\in\{1, \ldots,n\}$,
    \begin{align*}
          \mathbb{E}\left[
         \left(
        m_j\left(X_i^{(j)}\right)+\varepsilon_i-m_j\left(X_{\pi_j(i)}^{(j)}\right)
    \right)^4\right]
    \end{align*}
        is finite.
        Note that $\vert m_j(x)\vert <K$ for all $x\in[0,1]$, due to (A3). Hence, by the independence of $\mathbf{X}_i$ and $\varepsilon_i$ and (A4), we get
        \begin{align*}
                      &\mathbb{E}\left[
         \left(
        m_j\left(X_i^{(j)}\right)+\varepsilon_i-m_j\left(X_{\pi_j(i)}^{(j)}\right)
    \right)^4\right]\\
              &=
              \mathbb{E}\left[
         \left(
        m_j\left(X_i^{(j)}\right)-m_j\left(X_{\pi_j(i)}^{(j)}\right)
    \right)^4\right]
    +
      4\mathbb{E}\left[
         \left(
        m_j\left(X_i^{(j)}\right)-m_j\left(X_{\pi_j(i)}^{(j)}\right)
    \right)^3\varepsilon_i\right]\\
    &+
      6\mathbb{E}\left[
         \left(
        m_j\left(X_i^{(j)}\right)-m_j\left(X_{\pi_j(i)}^{(j)}\right)
    \right)^2\varepsilon_i^2\right]
    +
      4\mathbb{E}\left[
         \left(
        m_j\left(X_i^{(j)}\right)-m_j\left(X_{\pi_j(i)}^{(j)}\right)
    \right)\varepsilon_i^3\right]\\
    &+
    \nonumber
    \mathbb{E}\left[
    \varepsilon_i^4
    \right]\\
    &=
       \mathbb{E}\left[
         \left(
        m_j\left(X_i^{(j)}\right)-m_j\left(X_{\pi_j(i)}^{(j)}\right)
    \right)^4\right]
    +
    6\sigma^2
    \mathbb{E}\left[
         \left(
        m_j\left(X_i^{(j)}\right)-m_j\left(X_{\pi_j(i)}^{(j)}\right)
    \right)^2\right] \\
    &+
    \nonumber
    \mathbb{E}\left[
    \varepsilon_i^4
    \right]\\
    &\leq
    16K^4+24\sigma^2K^2+
    \mathbb{E}\left[
    \varepsilon_i^4
    \right]<\infty.
        \end{align*}
        Therefore the proof is completed.
\end{proof}

\begin{lemma}\label{Lemma 3}
Let $\mathbf{X}_{j,1}=\left[X_1^{(1)},\ldots,X_1^{(j-1)},X,X_1^{(j+1)},\ldots,X_1^{(p)}\right]^\top$, where $X$ is an independent copy of $X_1^{(j)}$, then

    \begin{align*}
        \mathbb{E}\left[h\right]=
        \mathbb{E}\left[(Y_1-\widetilde m(\mathbf{X}_{j,1}))^2\right]-\sigma^2=:I(j)
    \end{align*}
    if (A1)-(A4) is fulfilled
\end{lemma}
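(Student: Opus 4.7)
The plan is to compute $\mathbb{E}[h_{\gamma_n}]$ directly and recognise it as $I(j)$. My first step would be to exploit the symmetry of the kernel: since $\mathbf{Z}_1,\ldots,\mathbf{Z}_{\gamma_n}$ are i.i.d.\ and $\pi_{j,t}$ is a uniformly chosen random derangement independent of the data, every summand in the definition of $h_{\gamma_n}$ has the same expectation. Linearity of expectation therefore collapses the sum to
\begin{align*}
\mathbb{E}[h_{\gamma_n}] = \mathbb{E}\bigl[(Y_1 - \widetilde{m}(\mathbf{X}_1^{\pi_{j,t}}))^2\bigr] - \mathbb{E}\bigl[(Y_1 - \widetilde{m}(\mathbf{X}_1))^2\bigr].
\end{align*}
The second term is handled immediately by the model equation (\ref{Modellgleichunf}): $Y_1 - \widetilde{m}(\mathbf{X}_1) = \varepsilon_1$, hence its mean square is exactly $\sigma^2$.

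For the first term I would invoke (A2) to cancel the non-permuted coordinates, writing
\begin{align*}
Y_1 - \widetilde{m}(\mathbf{X}_1^{\pi_{j,t}}) = m_j(X_1^{(j)}) + \varepsilon_1 - m_j(X_{\pi_{j,t}(1)}^{(j)}).
\end{align*}
The crucial distributional input now comes from (A1): because $\pi_{j,t}$ is a derangement, $\pi_{j,t}(1) \in \{2,\ldots,\gamma_n\}$, so $X_{\pi_{j,t}(1)}^{(j)}$ is the $j$-th coordinate of some observation distinct from $\mathbf{Z}_1$. By the i.i.d.\ assumption on the $\mathbf{X}_i$ and the independence of $\pi_{j,t}$ from the data, $X_{\pi_{j,t}(1)}^{(j)}$ is distributed like an independent copy $X$ of $X_1^{(j)}$, and is independent of $(\mathbf{X}_1,\varepsilon_1)$.

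It follows that the joint law of $(X_1^{(1)},\ldots,X_1^{(j-1)},X_{\pi_{j,t}(1)}^{(j)},X_1^{(j+1)},\ldots,X_1^{(p)},\varepsilon_1)$ coincides with that of $(\mathbf{X}_{j,1},\varepsilon_1)$. Re-applying (A2) inside the square then identifies the first expectation with $\mathbb{E}[(Y_1-\widetilde{m}(\mathbf{X}_{j,1}))^2]$, and subtracting $\sigma^2$ yields the desired identity $\mathbb{E}[h] = I(j)$. Integrability throughout is guaranteed by (A3) and (A4), which also justify splitting the expectation. I do not anticipate any real obstacle; the only subtle point is that (A1) is genuinely needed for $X_{\pi_{j,t}(1)}^{(j)}$ to play the role of an \emph{independent} copy rather than occasionally coinciding with $X_1^{(j)}$, which is precisely why the derangement restriction was built into the framework.
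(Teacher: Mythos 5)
Your proposal is correct and follows essentially the same route as the paper's proof: collapse the sum by the i.i.d.\ symmetry, identify the non-permuted term with $\sigma^2$ via the model equation, reduce the permuted term with (A2), and then use (A1) so that $X_{\pi_{j,t}(1)}^{(j)}$ acts as an independent copy of $X_1^{(j)}$. The paper merely makes your distributional claim explicit by conditioning on $\pi_j(1)=k$ with the law of total expectation, which is the same argument in slightly more detail.
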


\begin{proof}
    Since $(\mathbf{X}_1,Y_1),\ldots,(\mathbf{X}_n,Y_n)$ are i.i.d., we can conclude from (A2), that
    \begin{align*}
            &\mathbb{E}\left[h((\mathbf{X}_1,Y_1),\ldots,(\mathbf{X}_n,Y_n))\right]\nonumber\\
            &=
            \mathbb{E}\left[\frac{1}{\gamma_n}\sum_{i=1}^{\gamma_n}
    \Big\{
        \big(
        Y_{i}-\widetilde{m}
        (\textbf{X}_{i}^{\pi_{j,t}})
        \big)^2
        -
        \big(
        Y_{i}-\widetilde{m}
        (\textbf{X}_{i})
        \big)^2
    \Big\}\right]\\
    &=
          \mathbb{E}\left[
        \big(
        Y_{1}-\widetilde{m}
        (\textbf{X}_{1}^{\pi_{j,t}})
        \big)^2    \right]
        -
         \mathbb{E}\left[
        \varepsilon_1^2
        \right]
        =
                 \mathbb{E}\left[
        \big(
        Y_{1}-\widetilde{m}
        (\textbf{X}_{1}^{\pi_{j,t}})
        \big)^2    \right]
        -
        \sigma^2\\
        &=
         \mathbb{E}\left[
         \left(
        m_j\left(X_1^{(j)}\right)+\varepsilon_1-m_j\left(X_{\pi_j(1)}^{(j)}\right)
    \right)^2\right]-\sigma^2
    \end{align*}
    holds true. With (A1) and the law of total expectation we get
    \begin{align*}
        &\mathbb{E}\left[
         \left(
        m_j\left(X_1^{(j)}\right)+\varepsilon_1-m_j\left(X_{\pi_j(1)}^{(j)}\right)
    \right)^2\right]\\
    &=
    \sum_{k=2}^{\gamma_n}\mathbb{P}\left(\pi_j(1)=k\right)
    \mathbb{E}\left[
         \left(
        m_j\left(X_1^{(j)}\right)+\varepsilon_1-m_j\left(X_{k}^{(j)}\right)
    \right)^2\right]\\
    &=
    \sum_{k=2}^{\gamma_n}
    \frac{1}{\gamma_n-1}
     \mathbb{E}\left[
         \left(
        m_j\left(X_1^{(j)}\right)+\varepsilon_1-m_j\left(X\right)
    \right)^2\right]=
    \mathbb{E}\left[
         \left(
        Y_1-\widetilde m(\mathbf{X}_{j,1})
    \right)^2\right].
    \end{align*}
\end{proof}

\subsection*{Proof of Theorem 3}
\begin{proof}
   The proof idea for the theorem is applying Theorem 2.1. The main task of the proof will be to check whether
    \begin{align}
        \frac{\mathbb{E}\left[\vert h_{\gamma_n}-I(j) \vert^{2k}
    \right]}{
    \mathbb{E}\left[\vert h_{\gamma_n}-I(j) \vert^{k} \label{theorem 2.2 ljapunov}
    \right]^2}
    \end{align}
    is uniformly bounded in $n$ for $k=2,3$.
Since we are dealing with permutations we start by establishing some properties.
The elements $\pi_j\in\mathcal{V}_n$ are so called derangements.
The number of derangements of an $\gamma_n$-element set is given as 
\begin{align*}
    \vert\mathcal{V}_n\vert=!\gamma_n:=\gamma_n!\sum_{i=0}^n\frac{(-1)^i}{i!}
    =
    \left[\frac{\gamma_n!}{e}\right]
    =
    \left\lfloor
    \frac{\gamma_n!+1}{e}
    \right\rfloor.
\end{align*}
For $i\neq k$
\begin{align}
        &\mathbb{P}\left(\nonumber\nonumber
\left\{\pi_j(k)=i\right\} \cup \left\{\pi_j(i)=k\right\} 
    \right)\\
    &=
    \nonumber
     \mathbb{P}\left(
\pi_j(k)=i
    \right)
    +
     \mathbb{P}\left(
\pi_j(i)=k 
    \right)
    -
            \mathbb{P}\left(
\left\{\pi_j(k)=i\right\} , \left\{\pi_j(i)=k\right\} 
    \right)\\
    &=
    \frac{2}{\gamma_n-1}
    -
    \frac{!(\gamma_n-2)}{!\gamma_n}=:p_{\gamma_n}\label{theorem 2.2 permu vereinigung}
\end{align}
and
\begin{align}
     \mathbb{P}\left(\nonumber
\pi_j(k)=i, \pi_j(i)\neq k
     \right)
     &=
          \mathbb{P}\left(
\pi_j(k)=i
     \right)\label{theorem 2.2 permu schnitt}
     -
          \mathbb{P}\left(
\pi_j(k)=i, \pi_j(i)= k
     \right)\\
     &=
     \frac{1}{\gamma_n-1}-\frac{!(\gamma_n-2)}{!\gamma_n}
     =\frac{1}{\gamma_n}+\mathcal{O}\left(\frac{1}{\gamma_n^2}\right).
\end{align}
Using (A2), \Cref{Lemma 3}, (\ref{theorem 2.2 permu vereinigung}) and the i.i.d. assumption
as well as $\mathbb{E}\left[\varepsilon_i\right]=0$ $(*)$ leads to
\begin{align*}
    &\mathbb{E}\left[\left|h_{\gamma_n}-I(j)\right|^2\right] \\
    &= \mathbb{E}\left[\left|\frac{1}{\gamma_n}\sum_{i=1}^{\gamma_n}\left(Y_i - \widetilde{m}\left(X_i^{\pi_{j}}\right)\right)^2 - \varepsilon_i^2 - I(j)\right|^2\right] \\
    &= \mathbb{E}\Bigg[\bigg|\frac{1}{\gamma_n} \sum_{i=1}^{\gamma_n} \Bigg\{
        \left(m_j\left(X_i^{(j)}\right) - m_j\left(X_{\pi(i)}^{(j)}\right)\right)^2\\
        &\quad\quad\quad\quad\quad\quad\quad\quad\quad\quad\quad\quad\quad\quad
        + 2\varepsilon_i\left(m_j\left(X_i^{(j)}\right) - m_j\left(X_{\pi(i)}^{(j)}\right)\right)
    \Bigg\} - I(j)\bigg|^2\Bigg]\\
        &\overset{(*)}{=}
     \frac{1}{\gamma_n}\sum_{i=1}^{\gamma_n}\mathbb{E}\Bigg[
     \Bigg\{
     \left(\nonumber
     m_j\left(X_i^{(j)}\right)
    - m_j\left(X_{\pi(i)}^{(j)}\right)\right)^2\\
        &\quad\quad\quad\quad\quad\quad\quad\quad\quad\quad\quad\quad\quad\quad+2\varepsilon_i\left(\nonumber
     m_j\left(X_i^{(j)}\right)
    - m_j\left(X_{\pi(i)}^{(j)}\right)\right)
    -I(j)\Bigg\}^2\Bigg]\\
            &+
    \nonumber
    \frac{1}{\gamma_n^2}\sum_{i=1}^{\gamma_n}\sum_{k=1,k\neq i}^{\gamma_n}
    \mathbb{E}\Bigg[
      \left\{
     \left(\nonumber
     m_j\left(X_i^{(j)}\right)
    - m_j\left(X_{\pi(i)}^{(j)}\right)\right)^2
    -I(j)\right\}\\
          &\quad\quad\quad\quad\quad\quad\quad\quad\quad\quad\quad\quad\quad\quad\
          \left\{
     \left(\nonumber
     m_j\left(X_k^{(j)}\right)
    - m_j\left(X_{\pi(k)}^{(j)}\right)\right)^2
    -I(j)\right\}
    \Bigg]\\[0.5em]&=
    \frac{1}{\gamma_n}
    \mathbb{V}\text{ar}\left[
     \left(\nonumber
     m_j\left(X_1^{(j)}\right)
    - m_j\left(X_{2}^{(j)}\right)\right)^2+2\varepsilon_1\left(\nonumber
     m_j\left(X_1^{(j)}\right)
    - m_j\left(X_{2}^{(j)}\right)\right)
    \right]\\
        &+
    \nonumber
    \frac{\gamma_n(\gamma_n-1)}{\gamma_n^2}
       \mathbb{P}\left(\nonumber
\pi_j(k)\neq i, \pi_j(i)\neq k
    \right)
    \mathbb{E}\left[
      \left(m_j\left(X_1^{(j)}\right)
    - m_j\left(X_{2}^{(j)}\right)\right)^2
    -I(j)
    \right]^2\\
    &+
    \frac{\gamma_n(\gamma_n-1)}{\gamma_n^2}
        \mathbb{P}\left(\nonumber
\pi_j(k)\neq i , \pi_j(i)=k
    \right)\\
        &
        \cdot\mathbb{E}\left[
      \left\{\!
     \left(\nonumber
     m_j\left(X_1^{(j)}\right)
    \!-\! m_j\left(X_{2}^{(j)}\right)\right)^2
    \!-\!I(j)\right\}\!
          \left\{\!
     \left(\nonumber
     m_j\left(X_1^{(j)}\right)
    \!-\! m_j\left(X_{3}^{(j)}\right)\right)^2
    \!-\!I(j)\right\}
    \right]\\
    &+
        \nonumber \mathcal{O}\left(\frac{1}{\gamma_n^2}\right)\\
            &=
    \frac{1}{\gamma_n}
    \mathbb{V}\text{ar}\left[
     \left(\nonumber
     m_j\left(X_1^{(j)}\right)
    - m_j\left(X_{2}^{(j)}\right)\right)^2+2\varepsilon_1\left(\nonumber
     m_j\left(X_1^{(j)}\right)
    - m_j\left(X_{2}^{(j)}\right)\right)
    \right]\\
\end{align*}

\newpage
\begin{align}
    &+
    \frac{\gamma_n(\gamma_n-1)}{\gamma_n^2}\frac{2}{\gamma_n-1}
        \mathbb{E}\Bigg[
      \left\{
     \left(\nonumber
     m_j\left(X_1^{(j)}\right)
    - m_j\left(X_{2}^{(j)}\right)\right)^2
    -I(j)\right\}\\
    &
    \quad\quad\quad\quad\quad
    \quad\quad\quad\quad\quad
    \quad\quad
          \left\{
     \left(\nonumber
     m_j\left(X_1^{(j)}\right)
    - m_j\left(X_{3}^{(j)}\right)\right)^2
    -I(j)\right\}
    \Bigg]\\
    &+
    \nonumber
    \mathcal{O}\left(\frac{1}{\gamma_n^2}\right)\\
     &=
    \frac{1}{\gamma_n}
    \mathbb{V}\text{ar}\left[
     \left(\nonumber
     m_j\left(X_1^{(j)}\right)
    - m_j\left(X_{2}^{(j)}\right)\right)^2+2\varepsilon_1\left(\nonumber
     m_j\left(X_1^{(j)}\right)
    - m_j\left(X_{2}^{(j)}\right)\right)
    \right] + 0\\
    &+
    \frac{2}{\gamma_n}
    \mathbb{C}\text{ov}\left[ \left(\nonumber
     m_j\left(X_1^{(j)}\right)
    - m_j\left(X_{2}^{(j)}\right)\right)^2,
     \left(\nonumber
     m_j\left(X_1^{(j)}\right)
    - m_j\left(X_{3}^{(j)}\right)\right)^2\right]\\
    &+
    \mathcal{O}\left(\frac{1}{\gamma_n}\right).
\end{align}

\noindent Without loss of generality we can assume $\mathbb{E}\left[m_j\left(X_1^{(j)}\right)\right]=0$, because
as in previous calculations we get

\begin{align*}
    h_{\gamma_n}
    &= 
    \frac{1}{\gamma_n}\sum_{i=1}^{\gamma_n}
    \left\{
    \left(Y_i
    -\widetilde m\left(X_i^{\pi_{j}}\right)\right)^2-\varepsilon_i^2
    \right\}
    \end{align*}
    \begin{align*}
    &=
      \frac{1}{\gamma_n}\sum_{i=1}^{\gamma_n}
    \left\{
    \left(m_j\left(X_i^{(j)}\right)+\varepsilon_i
    - m_j\left(X_{\pi(i)}^{(j)}\right)\right)^2-\varepsilon_i^2
    \right\}.
\end{align*}
Thus, the expectation of $m_j\left(X_i^{(j)}\right)$ and $m_j\left(X_{\pi(i)}^{(j)}\right)$ are cancelling each other.
For random variables $U,V$ and $W$ with $U,V\indep W$
\begin{align}
     \mathbb{C}\text{ov}\left(UW,V\right)=
     \mathbb{E}\left[UWV\right]-\mathbb{E}\left[UW\right]\mathbb{E}\left[V\right]
     =
     \mathbb{E}\left[W\right]\mathbb{C}\text{ov}\left(U,V\right)\label{theroem 2.2 Kovarianzregel}
\end{align}
holds,
which implies
\begin{align}
       &\mathbb{C}\text{ov}\left[ \left(\nonumber
     m_j\left(X_1^{(j)}\right)
    - m_j\left(X_{2}^{(j)}\right)\right)^2,
     \left(\nonumber
     m_j\left(X_1^{(j)}\right)
    - m_j\left(X_{3}^{(j)}\right)\right)^2\right]\\
    &=
           \mathbb{C}\text{ov}\left[\nonumber
     m_j\left(X_1^{(j)}\right)^2\!,\!
    m_j\left(X_1^{(j)}\right)^2\right]
    -4
     \mathbb{C}\text{ov}\left[\nonumber
     m_j\left(X_1^{(j)}\right)^2,
     m_j\left(X_1^{(j)}\right)
    m_j\left(X_2^{(j)}\right)\right]\\
    &+
    4 \mathbb{C}\text{ov}\left[\nonumber
      m_j\left(X_1^{(j)}\right)
    m_j\left(X_2^{(j)}\right)\!,\!
     m_j\left(X_1^{(j)}\right)
    m_j\left(X_2^{(j)}\right)\right]\\
    &=
    \nonumber
    \mathbb{V}\text{ar}\left[ m_j\left(X_1^{(j)}\right)^2\right]
\end{align}
due to the i.i.d. assumption. Using (\ref{theroem 2.2 Kovarianzregel}) 
and $\mathbb{E}\left[\varepsilon_1\right]=0$ leads to
\begin{align}
    &\mathbb{V}\text{ar}\left[
     \left(\nonumber
     m_j\left(X_1^{(j)}\right)
    - m_j\left(X_{2}^{(j)}\right)\right)^2+2\varepsilon_1\left(\nonumber
     m_j\left(X_1^{(j)}\right)
    - m_j\left(X_{2}^{(j)}\right)\right)
    \right]\\
     &=
     \mathbb{V}\text{ar}\left[
     \left(\nonumber
     m_j\left(X_1^{(j)}\right)
    - m_j\left(X_{2}^{(j)}\right)\right)^2\right]
    +
    \mathbb{V}\text{ar}\left[2\varepsilon_1\left(\nonumber
     m_j\left(X_1^{(j)}\right)
    - m_j\left(X_{2}^{(j)}\right)\right)
    \right]\\
    &+
    2
    \mathbb{C}\text{ov}\left[
         \left(\nonumber
     m_j\left(X_1^{(j)}\right)
    - m_j\left(X_{2}^{(j)}\right)\right)^2,
    2\varepsilon_1\left(\nonumber
     m_j\left(X_1^{(j)}\right)
    - m_j\left(X_{2}^{(j)}\right)\right)
    \right]\\
    &=
    \mathbb{V}\text{ar}\left[
     \left(\nonumber
     m_j\left(X_1^{(j)}\right)
    - m_j\left(X_{2}^{(j)}\right)\right)^2
    \right]
    +
   4 \mathbb{V}\text{ar}\left[
    \varepsilon_1\left(\nonumber
     m_j\left(X_1^{(j)}\right)
    - m_j\left(X_{2}^{(j)}\right)\right)
    \right].
\end{align}
Hence,
\begin{align}
    &\mathbb{E}\left[\left|h_{\gamma_n}-I(j)\right|^2\right]\nonumber\\
    &=
    \frac{2}{\gamma_n}
    \mathbb{V}\text{ar}\left[ m_j\left(X_1^{(j)}\right)^2\right]
    +
    \frac{1}{\gamma_n}
        \mathbb{V}\text{ar}\left[
     \left(\nonumber
     m_j\left(X_1^{(j)}\right)
    - m_j\left(X_{2}^{(j)}\right)\right)^2
    \right]\\
    &+
    \frac{4}{\gamma_n}
    \mathbb{V}\text{ar}\left[
    \varepsilon_1\left(\label{theorem 2.2 2. Moment}
     m_j\left(X_1^{(j)}\right)
    - m_j\left(X_{2}^{(j)}\right)\right)
    \right]
    +
      \mathcal{O}\left(\frac{1}{\gamma_n^2}\right).
\end{align}
Note that
\begin{align}
    \mathbb{E}\left[\left|h_{\gamma_n}-I(j)\right|^2\right]\geq
      \frac{2}{\gamma_n}
        \mathbb{V}\text{ar}\left[
     m_j\left(X_1^{(j)}\right)^2
    \right]
    +
       \mathcal{O}\left(\frac{1}{\gamma_n^2}\right).
       \label{theorem 2.2 zentr. moment größer als var}
\end{align}
   Since $\mathbb{V}\text{ar}\left[
   \nonumber
     m_j\left(X_1^{(j)}\right)
    \right]>0,$ it also holds true that  $\mathbb{V}\text{ar}\left[
   \nonumber
     m_j\left(X_1^{(j)}\right)
    \right]>0$ and with (\ref{theorem 2.2 zentr. moment größer als var}) there is a constant $c_1>0$
    such that $\mathbb{E}\left[\left|h_{\gamma_n}-I(j)\right|^2\right]\sim\frac{c_1}{\gamma_n}$
    and $\mathbb{E}\left[\left|h_{\gamma_n}-I(j)\right|^2\right]^2\sim\frac{c_1^2}{\gamma_n^2}$.
    With similar calculations as before, we get
    
\begin{align*}
        &\mathbb{E}\left[\left|h_{\gamma_n}-I(j)\right|^4\right]\nonumber\\
    &=
    \nonumber
        \mathbb{E}\Bigg[\bigg|\frac{1}{\gamma_n}\sum_{i=1}^{\gamma_n}
     \bigg\{
     \left(\nonumber
     m_j\left(X_i^{(j)}\right)
    - m_j\left(X_{\pi(i)}^{(j)}\right)\right)^2\\
    &\quad\quad\quad\quad\quad\quad\quad\quad\quad\quad\quad\quad\quad\quad
    +2\varepsilon_i\left(\nonumber
     m_j\left(X_i^{(j)}\right)
    - m_j\left(X_{\pi(i)}^{(j)}\right)\right)
\bigg\}
    -I(j)\bigg|^4\Bigg]\\[0.5em]
       &= \nonumber \mathcal{O}\left(\frac{1}{\gamma_n^2}\right) + \frac{1}{\gamma_n^4}\sum_{i_1=1}^{\gamma_n}\underset{i_2 \neq i_1}{\sum_{i_2=1}^{\gamma_n}} \underset{i_3 \neq i_2 \neq i_1}{\sum_{i_3=1}^{\gamma_n}} \\
          &\mathbb{E}\bigg[ \!\bigg\{  \!\left( m_j\left(X_{i_1}^{(j)}\right) - m_j\left(X_{\pi(i_1)}^{(j)}\right)\right)^2
   \nonumber
   +2\varepsilon_{i_1}\left( m_j\left(X_{i_1}^{(j)}\right) - m_j\left(X_{\pi(i_1)}^{(j)}\right)\right) -I(j)\bigg\}^2
\end{align*}

\begin{align}
   &\nonumber \left\{  \!\left( m_j\left(X_{i_2}^{(j)}\right) - m_j\left(X_{\pi(i_2)}^{(j)}\right)\right)^2+2\varepsilon_{i_2}\left( m_j\left(X_{i_2}^{(j)}\right) - m_j\left(X_{\pi(i_2)}^{(j)}\right)\right) -I(j)\right\}\\ & \nonumber \left\{  \!\left( m_j\left(X_{i_3}^{(j)}\right) - m_j\left(X_{\pi(i_3)}^{(j)}\right)\right)^2+2\varepsilon_{i_3}\left( m_j\left(X_{i_3}^{(j)}\right) - m_j\left(X_{\pi(i_3)}^{(j)}\right)\right) -I(j)\right\}\bigg]\\
   &+ \frac{1}{\gamma_n^4}\sum_{i_1=1}^{\gamma_n}\underset{i_2 \neq i_1}{\sum_{i_2=1}^{\gamma_n}} \underset{i_3 \neq i_2 \neq i_1}{\sum_{i_3=1}^{\gamma_n}} \underset{i_4 \neq i_1 \neq i_3}{\sum_{i_4=1;i_4 \neq i_2}^{\gamma_n}} \nonumber\\
   &\mathbb{E}\bigg[\! \left\{ \!\left( m_j\left(X_{i_1}^{(j)}\right) - m_j\left(X_{\pi(i_1)}^{(j)}\right)\right)^2+2\varepsilon_{i_1} \!\left( \! m_j\left(X_{i_1}^{(j)}\right) - m_j\left(X_{\pi(i_1)}^{(j)}\right)\right) -I(j)\right\}\nonumber\\ 
   &\nonumber \left\{ \! \left( m_j\left(X_{i_2}^{(j)}\right) - m_j\left(X_{\pi(i_2)}^{(j)}\right)\right)^2+2\varepsilon_{i_2}\!\left( m_j\left(X_{i_2}^{(j)}\right) - m_j\left(X_{\pi(i_2)}^{(j)}\right)\right) -I(j)\right\}\\ &\nonumber \left\{\! \left( m_j\left(X_{i_3}^{(j)}\right) - m_j\left(X_{\pi(i_3)}^{(j)}\right)\right)^2+2\varepsilon_{i_3}\!\left( m_j\left(X_{i_3}^{(j)}\right) - m_j\left(X_{\pi(i_3)}^{(j)}\right)\right) -I(j)\right\}\\ &\nonumber \left\{ \!\left( m_j\left(X_{i_4}^{(j)}\right) - m_j\left(X_{\pi(i_4)}^{(j)}\right)\right)^2+2\varepsilon_{i_4}\!\left( m_j\left(X_{i_4}^{(j)}\right) - m_j\left(X_{\pi(i_4)}^{(j)}\right)\right) -I(j)\right\} \bigg]\\[0.5em]
        &=
        \nonumber
    \mathcal{O}\left(\frac{1}{\gamma_n^2}\right)
    +
    \frac{3!\gamma_n(\gamma_n-1)(\gamma_n-2)}{\gamma_n^4}\nonumber\\
    &\mathbb{E}\bigg[
    \!
     \left\{    \!
     \left(\nonumber
     m_j\left(X_1^{(j)}\right)
    - m_j\left(X_{\pi(1)}^{(j)}\right)\right)^2+2\varepsilon_i    \!\left(\nonumber
     m_j\left(X_1^{(j)}\right)
    - m_j\left(X_{\pi(1)}^{(j)}\right)\right)
    -I(j)\right\}^2\\
        &
    \nonumber
     \left\{    \!
     \left(\nonumber
     m_j\left(X_{2}^{(j)}\right)
    - m_j\left(X_{\pi(2)}^{(j)}\right)\right)^2
    \!-\!I(j)\right\}    
         \left\{
     \left(\nonumber
     m_j\left(X_{3}^{(j)}\right)
    - m_j\left(X_{\pi(3)}^{(j)}\right)\right)^2
        \!-    \!I(j)\right\}      \!
    \bigg]\\
    &+
    \frac{4!\gamma_n(\gamma_n\!-\!1)(\gamma_n\!-\!2)(\gamma_n\!-\!3)}{\gamma_n^4}\nonumber\\
    &
     \mathbb{E}\bigg[
     \!\left\{\!
     \left(\nonumber
     m_j\left(X_1^{(j)}\right)
    - m_j\left(X_{\pi(1)}^{(j)}\right)\right)^2
    \!-\!I(j)\right\}\!
     \left\{
     \left(\nonumber
     m_j\left(X_2^{(j)}\right)
    - m_j\left(X_{\pi(2)}^{(j)}\right)\right)^2
    \!-\!I(j)\!\right\}\\
      &
    \nonumber
     \left\{\!
     \left(\nonumber
     m_j\left(X_3^{(j)}\right)
    - m_j\left(X_{\pi(3)}^{(j)}\right)\right)^2
    \!-\!I(j)\right\}
     \left\{
     \left(\nonumber
     m_j\left(X_{4}^{(j)}\right)
    - m_j\left(X_{\pi(4)}^{(j)}\right)\right)^2
    \!-\!I(j)\right\}    \!
    \bigg],
\end{align}
where the second equality holds true because all summands of the remaining $\mathcal{O}(\gamma_n^2)$ summand are uniformly bounded.
With the factor $1/\gamma_n^4$ the remainder is $\mathcal{O}(1/\gamma_n^2)$.\\
To calculate the expectations, we require some basis arguments of combinatorics. In particular, we only need to consider derangements such that we cannot isolate one single factor of the product regarding independence to the other factors. The latter is because each single factor itself is centered. Additionally, we can make use of the symmetry of the product leading to respective constant factor in front of the probabilities. To keep the proofs as simple as possible, we do not specify these factors more than that they are finite and positive. In this spirit, for constants $c_2,\ldots,c_7\in(0,\infty)$ we have:
\begin{align}
       &\mathbb{E}\bigg[\!
     \left\{\!
     \left(\nonumber
     m_j\left(X_1^{(j)}\right)
    - m_j\left(X_{\pi(1)}^{(j)}\right)\right)^2+2\varepsilon_i\left(\nonumber
     m_j\left(X_1^{(j)}\right)
    - m_j\left(X_{\pi(1)}^{(j)}\right)\right)
    -I(j)\right\}^2\\
        &
    \nonumber\!
     \left\{\!
     \left(\nonumber
     m_j\left(X_{2}^{(j)}\right)
    - m_j\left(X_{\pi(2)}^{(j)}\right)\right)^2
    -I(j)\right\}\!    
         \left\{
     \left(\nonumber
     m_j\left(X_{3}^{(j)}\right)
    - m_j\left(X_{\pi(3)}^{(j)}\right)\right)^2
    -I(j)\!\right\}  \!
    \bigg]\\
    &=
    \nonumber
    c_2\mathbb{P}\left(
\pi(1)=2,\pi(2)=3,\pi(3)=1
    \right)\\
      &
    \mathbb{E}\bigg[
     \bigg\{
     \left(\nonumber
     m_j\left(X_1^{(j)}\right)
    - m_j\left(X_{2}^{(j)}\right)\right)^2+2\varepsilon_i\left(\nonumber
     m_j\left(X_1^{(j)}\right)
    - m_j\left(X_{2}^{(j)}\right)\right)
    -I(j)\bigg\}^2\\
        &
    \nonumber
     \left\{
     \left(\nonumber
     m_j\left(X_{2}^{(j)}\right)
    - m_j\left(X_{3}^{(j)}\right)\right)^2
    -I(j)\right\}  \!
     \left\{
     \left(\nonumber
     m_j\left(X_{3}^{(j)}\right)
    - m_j\left(X_{1}^{(j)}\right)\right)^2
    -I(j)\right\}  
    \bigg]\\
    &+
        \nonumber
    c_3\mathbb{P}\left(
\pi(1)=2,\pi(2)=3,\pi(3)>3
    \right)\\
    &
    \mathbb{E}\bigg[
     \bigg\{
     \left(\nonumber
     m_j\left(X_1^{(j)}\right)
    - m_j\left(X_{2}^{(j)}\right)\right)^2+2\varepsilon_i\left(\nonumber
     m_j\left(X_1^{(j)}\right)
    - m_j\left(X_{2}^{(j)}\right)\right)
    -I(j)\bigg\}^2\\
           &
    \nonumber
     \left\{
     \left(\nonumber
     m_j\left(X_{2}^{(j)}\right)
    - m_j\left(X_{3}^{(j)}\right)\right)^2
    -I(j)\right\}  \!
     \left\{
     \left(\nonumber
     m_j\left(X_{3}^{(j)}\right)
    - m_j\left(X_{4}^{(j)}\right)\right)^2
    -I(j)\right\}  
    \bigg]\\
    &+
        \nonumber
    c_4\mathbb{P}\left(
\pi(1)=2,\pi(2)>3,\pi(3)=1
    \right)\\
    &
    \mathbb{E}\bigg[
     \bigg\{
     \left(\nonumber
     m_j\left(X_1^{(j)}\right)
    - m_j\left(X_{2}^{(j)}\right)\right)^2+2\varepsilon_i\left(\nonumber
     m_j\left(X_1^{(j)}\right)
    - m_j\left(X_{2}^{(j)}\right)\right)
    -I(j)\bigg\}^2\\
           &
    \nonumber
     \left\{
     \left(\nonumber
     m_j\left(X_{2}^{(j)}\right)
    - m_j\left(X_{4}^{(j)}\right)\right)^2
    -I(j)\right\}  
     \left\{
     \left(\nonumber
     m_j\left(X_{3}^{(j)}\right)
    - m_j\left(X_{1}^{(j)}\right)\right)^2
    -I(j)\right\}  
    \bigg]\\
      &+
        \nonumber
    c_5\mathbb{P}\left(
\pi(1)>3,\pi(2)>3,\pi(3)=2
    \right)\\
    &
    \mathbb{E}\bigg[
     \bigg\{
     \left(\nonumber
     m_j\left(X_1^{(j)}\right)
    - m_j\left(X_{4}^{(j)}\right)\right)^2+2\varepsilon_i\left(\nonumber
     m_j\left(X_1^{(j)}\right)
    - m_j\left(X_{4}^{(j)}\right)\right)
    -I(j)\bigg\}^2\\
           &
    \nonumber
     \left\{
     \left(\nonumber
     m_j\left(X_{2}^{(j)}\right)
    - m_j\left(X_{5}^{(j)}\right)\right)^2
    -I(j)\right\}  
     \left\{
     \left(\nonumber
     m_j\left(X_{3}^{(j)}\right)
    - m_j\left(X_{2}^{(j)}\right)\right)^2
    -I(j)\right\}  
    \bigg]\\
    &=\mathcal{O}\left(\frac{1}{\gamma_n}\right),\label{theorem 2.2 viertes moment erste Summe}
\end{align}
since $m_j$, and therefore also the expectation, is bounded,
\begin{align}
    &\mathbb{P}\left(\nonumber
\pi(1)=2,\pi(2)=3,\pi(3)=1
    \right)=\mathcal{O}\left(\frac{1}{\gamma_n^3}\right),\\
    &\mathbb{P}\left(
\pi(1)=2,\pi(2)=3,\pi(3)>3
    \right)=\mathcal{O}\left(\frac{1}{\gamma_n^2}\right),\nonumber\\
    &
    \mathbb{P}\left(
\pi(1)=2,\pi(2)>3,\pi(3)=1
    \right)=\mathcal{O}\left(\frac{1}{\gamma_n^2}\right)\nonumber
\end{align}
and
\begin{align*}
    \mathbb{P}\left(
\pi(1)>3,\pi(1)>3,\pi(3)=2
    \right)=\mathcal{O}\left(\frac{1}{\gamma_n}\right).
\end{align*}  
Furthermore
\begin{align}
       &
     \mathbb{E}\bigg[
     \!\left\{\!
     \left(\nonumber
     m_j\left(X_1^{(j)}\right)
    - m_j\left(X_{\pi(1)}^{(j)}\right)\right)^2
    \!-\!I(j)\right\}\!
     \left\{
     \left(\nonumber
     m_j\left(X_2^{(j)}\right)
    - m_j\left(X_{\pi(2)}^{(j)}\right)\right)^2
    \!-\!I(j)\right\}\\
      &
    \nonumber
     \left\{\!
     \left(\nonumber
     m_j\left(X_3^{(j)}\right)
    - m_j\left(X_{\pi(3)}^{(j)}\right)\right)^2
    \!-\!I(j)\right\}\!
     \left\{
     \left(\nonumber
     m_j\left(X_{4}^{(j)}\right)
    - m_j\left(X_{\pi(4)}^{(j)}\right)\right)^2
    \!-\!I(j)\right\}\bigg]\\
    &=
    c_6\mathbb{P}\left(\pi(1)=2,\pi(2)\notin\{3,4\},\pi(3)=4,\pi(4)\notin\{1,2\}\right)
    \nonumber\\
    &
    \mathbb{E}\bigg[
     \left\{
     \left(\nonumber
     m_j\left(X_1^{(j)}\right)
    - m_j\left(X_{2}^{(j)}\right)\right)^2
    -I(j)\right\}\\
      &
    \left\{
     \left(\nonumber
     m_j\left(X_2^{(j)}\right)
    - m_j\left(X_{\pi(2)}^{(j)}\right)\right)^2
    -I(j)\right\}
    \nonumber
     \left\{
     \left(\nonumber
     m_j\left(X_3^{(j)}\right)
    - m_j\left(X_{4}^{(j)}\right)\right)^2
    -I(j)\right\}\\
      &
     \left\{
     \left(\nonumber
     m_j\left(X_{4}^{(j)}\right)
    - m_j\left(X_{\pi(4)}^{(j)}\right)\right)^2
    -I(j)\right\}\bigg|\pi(2)\notin\{3,4\}
,\pi(4)\notin\{1,2\}\bigg]\\[0.3em]
    &+
    c_7\mathbb{P}\left(\pi(1)=2,\pi(2)=3,\pi(3)=4,\pi(4)>4\right)\nonumber\\
    &
    \mathbb{E}\bigg[
     \!\left\{\!
     \left(\nonumber
     m_j\left(X_1^{(j)}\right)
    - m_j\left(X_{2}^{(j)}\right)\right)^2
    -I(j)\right\}
    \left\{
     \left(\nonumber
     m_j\left(X_2^{(j)}\right)
    - m_j\left(X_{3}^{(j)}\right)\right)^2
    \!-\!I(j)\right\}\\
      &
    \nonumber
     \left\{\!
     \left(\nonumber
     m_j\left(X_3^{(j)}\right)
    - m_j\left(X_{4}^{(j)}\right)\right)^2
    \!-\!I(j)\right\}\!
     \left\{
     \left(\nonumber
     m_j\left(X_{4}^{(j)}\right)
    - m_j\left(X_{5}^{(j)}\right)\right)^2
    \!-\!I(j)\right\}\!\bigg]\\
    &+\nonumber
     \mathcal{O}\left(\frac{1}{\gamma_n^4}\right)\\
    &=
    \mathcal{O}\left(\frac{1}{\gamma_n^2}\right),\label{theorem 2.2 viertes moment zweite Summe}
\end{align}
because
\begin{align*}
    &\mathbb{P}\left(\pi(1)=2,\pi(2)\notin\{3,4\},\pi(3)=4,\pi(4)\notin\{1,2\}\right)\\
    &\leq
    \mathbb{P}\left(\pi(1)=2,\pi(3)=4,\pi(4)>4\right)
    =\frac{!(\gamma_n-2)}{!\gamma_n}
    =
    \frac{1}{\gamma_n^2}+o(1)
\end{align*}
and
\begin{align*}
    &\mathbb{P}\left(\pi(1)=2,\pi(2)=3,\pi(3)=4,i(4)>4\right)\\&\leq\mathbb{P}\left(\pi(1)=2,\pi(3)=4\right)
     =\frac{!(\gamma_n-2)}{!\gamma_n}  =
    \frac{1}{\gamma_n^2}+o(1).
\end{align*}
Hence, with (\ref{theorem 2.2 viertes moment erste Summe}) and (\ref{theorem 2.2 viertes moment zweite Summe})
\begin{align*}
    \mathbb{E}\left[\left|h_{\gamma_n}-I(j)\right|^4\right]=\mathcal{O}\left(\frac{1}{\gamma_n^2}\right)
\end{align*}
holds true. 
Since $\mathbb{E}\left[\left|h_{\gamma_n}-I(j)\right|^2\right]^2\sim\frac{c_1^2}{\gamma_n^2}$, it also holds that
\[
\frac{ \mathbb{E}\left[\left|h_{\gamma_n}-I(j)\right|^4\right]}{\mathbb{E}\left[\left|h_{\gamma_n}-I(j)\right|^2\right]^2}
=
\mathcal{O}(1)
\]
is uniformly bounded.
With Hölder inequality, we also have
\begin{align*}
    \mathbb{E}\left[\left|h_{\gamma_n}-I(j)\right|^3\right]^2
    \leq
    \mathbb{E}\left[\left|h_{\gamma_n}-I(j)\right|^4\right]^{3/2}
    \leq
    c_8\left(\frac{1}{\gamma_n^2}\right)^{3/2}=\frac{c_8}{\gamma_n^3}
\end{align*}
and
\begin{align*}
    \mathbb{E}\left[\left|h_{\gamma_n}-I(j)\right|^3\right]^2
    \geq
    \mathbb{E}\left[\left|h_{\gamma_n}-I(j)\right|^2\right]^{3/2}
    \geq c_9\left(\frac{1}{\gamma_n^2}\right)^{3/2}=\frac{c_9}{\gamma_n^3}
\end{align*}
for constants $c_8,c_9>0$. To show that (\ref{theorem 2.2 ljapunov})
is uniformly bounded, it only remains to show that
\begin{align*}
     \mathbb{E}\left[\left|h_{\gamma_n}-I(j)\right|^6\right]=\mathcal{O}\left(\frac{1}{\gamma_n^3}\right).
\end{align*}
In the same way as for the fourth moment, (A2), \Cref{Lemma 3},  and the i.i.d assumptions
can be used to recieve
\begin{align*}
      &\mathbb{E}\left[\left|h_{\gamma_n}-I(j)\right|^6\right]\nonumber\\
              &=
    \nonumber
        \mathbb{E}\Bigg[\bigg|\frac{1}{\gamma_n}\sum_{i=1}^{\gamma_n}
     \bigg\{
     \left(\nonumber
     m_j\left(X_i^{(j)}\right)
    - m_j\left(X_{\pi(i)}^{(j)}\right)\right)^2\\
    &
    \quad\quad\quad\quad\quad\quad\quad\quad\quad\quad\quad\quad\quad
    +2\varepsilon_i\left(\nonumber
     m_j\left(X_i^{(j)}\right)
    - m_j\left(X_{\pi(i)}^{(j)}\right)\right)
\bigg\}
    -I(j)\bigg|^6\Bigg]\\[0.5em]
        &=
    \nonumber
    \mathcal{O}\left(\frac{1}{\gamma_n^3}\right)   +
    \frac{1}{\gamma_n^6}\underset{\text{pairwise distinct}}{\sum_{i,i_2,{i_3},i_4}}\\
      & \mathbb{E}\bigg[\!
     \left\{\!
     \left(\nonumber
     m_j\left(X_{i_1}^{(j)}\right)
    - m_j\left(X_{\pi({i_1})}^{(j)}\right)\right)^2+2\varepsilon_{i_1}\left(\nonumber
     m_j\left(X_{i_1}^{(j)}\right)
    - m_j\left(X_{\pi({i_1})}^{(j)}\right)\right)
    -I(j)\right\}^3\\
          &
    \nonumber
     \left\{
     \left(\nonumber
     m_j\left(X_{i_3}^{(j)}\right)
    - m_j\left(X_{\pi({i_3})}^{(j)}\right)\right)^2+2\varepsilon_{i_3}\left(\nonumber
     m_j\left(X_{i_3}^{(j)}\right)
    - m_j\left(X_{\pi(i_4)}^{(j)}\right)\right)
    -I(j)\right\}\\
      &
    \nonumber
     \left\{
     \left(\nonumber
     m_j\left(X_{i_4}^{(j)}\right)
    - m_j\left(X_{\pi(i_4)}^{(j)}\right)\right)^2+2\varepsilon_{i_4}\left(\nonumber
     m_j\left(X_{i_4}^{(j)}\right)
    - m_j\left(X_{\pi(i_4)}^{(j)}\right)\right)
    -I(j)\right\}
    \bigg]
\end{align*}

\begin{align}
&+
    \nonumber\frac{1}{\gamma_n^6}\underset{\text{pairwise distinct}}{\sum_{{i_1},i_2,{i_3},i_4}}\\
      &\mathbb{E}\bigg[\!
     \left\{\!
     \left(\nonumber
     m_j\left(X_{i_1}^{(j)}\right)
    - m_j\left(X_{\pi({i_1})}^{(j)}\right)\right)^2+2\varepsilon_{i_1}\left(\nonumber
     m_j\left(X_{i_1}^{(j)}\right)
    - m_j\left(X_{\pi{i_1})}^{(j)}\right)\right)
    -I(j)\right\}^2\\
    &
    \nonumber
     \left\{\!
     \left(\nonumber
     m_j\left(X_{i_2}^{(j)}\right)
    - m_j\left(X_{\pi({i_2})}^{(j)}\right)\right)^2+2\varepsilon_{i_2}\left(\nonumber
     m_j\left(X_{i_2}^{(j)}\right)
    - m_j\left(X_{\pi({i_2})}^{(j)}\right)\right)
    -I(j)\right\}^2\\  
      &
    \nonumber
     \left\{\!
     \left(\nonumber
     m_j\left(X_{i_3}^{(j)}\right)
    - m_j\left(X_{\pi({i_3})}^{(j)}\right)\right)^2+2\varepsilon_{i_3}\left(\nonumber
     m_j\left(X_{i_3}^{(j)}\right)
    - m_j\left(X_{\pi(i_4)}^{(j)}\right)\right)
    -I(j)\right\}\\
      &
    \nonumber
     \left\{\!
     \left(\nonumber
     m_j\left(X_{i_4}^{(j)}\right)
    - m_j\left(X_{\pi(i_4)}^{(j)}\right)\right)^2+2\varepsilon_{i_4}\left(\nonumber
     m_j\left(X_{i_4}^{(j)}\right)
    - m_j\left(X_{\pi(i_4)}^{(j)}\right)\right)
    -I(j)\right\}
    \bigg]\\    
    &\quad+
        \frac{1}{\gamma_n^6}
        \underset{\text{pairwise distinct}}{
        \sum_{{i_1},{i_2},{i_3},i_4,i_5} }\nonumber\\
        &
      \mathbb{E}\bigg[
     \left\{\!
     \left(\nonumber
     m_j\left(X_{i_1}^{(j)}\right)
    - m_j\left(X_{\pi({i_1})}^{(j)}\right)\right)^2+2\varepsilon_{i_1}\left(\nonumber
     m_j\left(X_{i_1}^{(j)}\right)
    - m_j\left(X_{\pi({i_1})}^{(j)}\right)\right)
    -I(j)\right\}^2\\
    &
    \nonumber
     \left\{\!
     \left(\nonumber
     m_j\left(X_{i_2}^{(j)}\right)
    - m_j\left(X_{\pi({i_2})}^{(j)}\right)\right)^2+2\varepsilon_{i_2}\left(\nonumber
     m_j\left(X_{i_2}^{(j)}\right)
    - m_j\left(X_{\pi({i_2})}^{(j)}\right)\right)
    -I(j)\right\}\\
      &
    \nonumber
     \left\{\!
     \left(\nonumber
     m_j\left(X_{i_3}^{(j)}\right)
    - m_j\left(X_{\pi({i_3})}^{(j)}\right)\right)^2+2\varepsilon_{i_3}\left(\nonumber
     m_j\left(X_{i_3}^{(j)}\right)
    - m_j\left(X_{\pi({i_3})}^{(j)}\right)\right)
    -I(j)\right\}\\
       &
    \nonumber
     \left\{\!
     \left(\nonumber
     m_j\left(X_{i_4}^{(j)}\right)
    - m_j\left(X_{\pi(i_4)}^{(j)}\right)\right)^2+2\varepsilon_{i_4}\left(\nonumber
     m_j\left(X_{i_4}^{(j)}\right)
    - m_j\left(X_{\pi(i_4)}^{(j)}\right)\right)
    -I(j)\right\} \\
       &
    \nonumber
     \left\{\!
     \left(\nonumber
     m_j\left(X_{i_5}^{(j)}\right)
    - m_j\left(X_{\pi(i_5)}^{(j)}\right)\right)^2+2\varepsilon_{i_5}\left(\nonumber
     m_j\left(X_{i_5}^{(j)}\right)
    - m_j\left(X_{\pi(i_5)}^{(j)}\right)\right)
    -I(j)\right\} 
    \bigg]\\
    &\quad+
        \frac{1}{\gamma_n^6}\nonumber
        \underset{\text{pairwise distinct}}{
        \sum_{{i_1},{i_2},{i_3},i_4,i_5,i_6}}\\
        &
      \mathbb{E}\bigg[\!
     \left\{\!
     \left(\nonumber
     m_j\left(X_{i_1}^{(j)}\right)
    - m_j\left(X_{\pi({i_1})}^{(j)}\right)\right)^2+2\varepsilon_{i_1}\left(\nonumber
     m_j\left(X_{i_1}^{(j)}\right)
    - m_j\left(X_{\pi({i_1})}^{(j)}\right)\right)
    -I(j)\right\}\\
    &
    \nonumber
     \left\{\!
     \left(\nonumber
     m_j\left(X_{i_2}^{(j)}\right)
    - m_j\left(X_{\pi({i_2})}^{(j)}\right)\right)^2+2\varepsilon_{i_2}\left(\nonumber
     m_j\left(X_{i_2}^{(j)}\right)
    - m_j\left(X_{\pi({i_2})}^{(j)}\right)\right)
    -I(j)\right\}\\
      &
    \nonumber
     \left\{\!
     \left(\nonumber
     m_j\left(X_{i_3}^{(j)}\right)
    - m_j\left(X_{\pi({i_3})}^{(j)}\right)\right)^2+2\varepsilon_{i_3}\left(\nonumber
     m_j\left(X_{i_3}^{(j)}\right)
    - m_j\left(X_{\pi({i_3})}^{(j)}\right)\right)
    -I(j)\right\}\\
       &
    \nonumber
     \left\{\!
     \left(\nonumber
     m_j\left(X_{i_4}^{(j)}\right)
    - m_j\left(X_{\pi(i_4)}^{(j)}\right)\right)^2+2\varepsilon_{i_4}\left(\nonumber
     m_j\left(X_{i_4}^{(j)}\right)
    - m_j\left(X_{\pi(i_4)}^{(j)}\right)\right)
    -I(j)\right\} \\
       &
    \nonumber
     \left\{\!
     \left(\nonumber
     m_j\left(X_{i_5}^{(j)}\right)
    - m_j\left(X_{\pi(i_5)}^{(j)}\right)\right)^2+2\varepsilon_{i_5}\left(\nonumber
     m_j\left(X_{i_5}^{(j)}\right)
    - m_j\left(X_{\pi(i_5)}^{(j)}\right)\right)
    -I(j)\right\} \\
       &
    \nonumber
     \left\{\!
     \left(\nonumber
     m_j\left(X_{i_6}^{(j)}\right)
    - m_j\left(X_{\pi(i_6)}^{(j)}\right)\right)^2+2\varepsilon_{i_6}\left(\nonumber
     m_j\left(X_{i_6}^{(j)}\right)
    - m_j\left(X_{\pi(i_6)}^{(j)}\right)\right)
    -I(j)\right\} 
    \bigg]\\
          &=\mathcal{O}\left(\frac{1}{\gamma_n^3}\right)\nonumber
    \end{align}

    \begin{align}
      &+
      \frac{4!(\gamma_n\!-\!1)(\gamma_n\!-\!2)(\gamma_n\!-\!3)}{\gamma_n^5}\nonumber\\
      &
       \mathbb{E}\bigg[
     \left\{
     \left(\nonumber
     m_j\left(X_1^{(j)}\right)
    - m_j\left(X_{\pi(1)}^{(j)}\right)\right)^2+2\varepsilon_1\left(\nonumber
     m_j\left(X_1^{(j)}\right)
    - m_j\left(X_{\pi(1)}^{(j)}\right)\right)
    \!-\!I(j)\right\}^3\\
    &
    \nonumber
     \left\{
     \left(\nonumber
     m_j\left(X_2^{(j)}\right)
    - m_j\left(X_{\pi(2)}^{(j)}\right)\right)^2
    \!-\!I(j)\right\}\!
     \left\{
     \left(\nonumber
     m_j\left(X_3^{(j)}\right)
    - m_j\left(X_{\pi(3)}^{(j)}\right)\right)^2
    \!-\!I(j)\right\}\\
      &
\quad\quad\quad\quad\quad
    \quad\quad\quad\quad\quad
    \quad\quad\quad\quad
    \nonumber
     \left\{
     \left(\nonumber
     m_j\left(X_{4}^{(j)}\right)
    - m_j\left(X_{\pi(4)}^{(j)}\right)\right)^2
    \!-\!I(j)\right\}\!
    \bigg]\\
        &+
      \frac{4!(\gamma_n\!-\!1)(\gamma_n\!-\!2)(\gamma_n\!-\!3)}{\gamma_n^5}\nonumber\\
      &       \mathbb{E}\bigg[
     \left\{
     \left(\nonumber
     m_j\left(X_1^{(j)}\right)
    - m_j\left(X_{\pi(1)}^{(j)}\right)\right)^2+2\varepsilon_1\left(\nonumber
     m_j\left(X_1^{(j)}\right)
    - m_j\left(X_{\pi(1)}^{(j)}\right)\right)
    \!-\!I(j)\right\}^2\\
    &
    \nonumber
       \left\{
     \left(\nonumber
     m_j\left(X_2^{(j)}\right)
    - m_j\left(X_{\pi(2)}^{(j)}\right)\right)^2+2\varepsilon_2\left(\nonumber
     m_j\left(X_2^{(j)}\right)
    - m_j\left(X_{\pi(2)}^{(j)}\right)\right)
    \!-\!I(j)\right\}^2\\
      &
    \nonumber
       \left\{
     \left(\nonumber
     m_j\left(X_3^{(j)}\right)
    - m_j\left(X_{\pi(3)}^{(j)}\right)\right)^2
    \!-\!I(j)\right\}\!
     \left\{
     \left(\nonumber
     m_j\left(X_{4}^{(j)}\right)
    - m_j\left(X_{\pi(4)}^{(j)}\right)\right)^2
    \!-\!I(j)\right\}\!
    \bigg]\\
    &+
    \nonumber
    \frac{5!(\gamma_n\!-\!1)(\gamma_n\!-\!2)(\gamma_n\!-\!3)(\gamma_n\!-\!4)}{\gamma_n^5}\\
     &
      \mathbb{E}\bigg[
     \!\left\{\!
     \left(\nonumber
     m_j\left(X_1^{(j)}\right)
    - m_j\left(X_{\pi(1)}^{(j)}\right)\right)^2+2\varepsilon_1\left(\nonumber
     m_j\left(X_1^{(j)}\right)
    - m_j\left(X_{\pi(1)}^{(j)}\right)\right)
    \!-\!I(j)\right\}^2\\
    &
    \nonumber
     \left\{
     \left(\nonumber
     m_j\left(X_2^{(j)}\right)
    - m_j\left(X_{\pi(2)}^{(j)}\right)\right)^2
    \!-\!I(j)\right\}\!
     \left\{
     \left(\nonumber
     m_j\left(X_3^{(j)}\right)
    - m_j\left(X_{\pi(3)}^{(j)}\right)\right)^2
    \!-\!I(j)\right\}\\
      &
    \nonumber
     \left\{
     \left(\nonumber
     m_j\left(X_{4}^{(j)}\right)
    - m_j\left(X_{\pi(4)}^{(j)}\right)\right)^2
    \!-\!I(j)\right\}\!
     \left\{
     \left(\nonumber
     m_j\left(X_{5}^{(j)}\right)
    - m_j\left(X_{\pi(5)}^{(j)}\right)\right)^2
    \!-\!I(j)\right\}\!
    \bigg]\\
    &+
    \nonumber
    \frac{6!(\gamma_n\!-\!1)(\gamma_n\!-\!2)(\gamma_n\!-\!3)(\gamma_n\!-\!4)
    (\gamma_n\!-\!5)}{\gamma_n^5}\\
     &
      \mathbb{E}\bigg[
    \! \left\{\!
     \left(\nonumber
     m_j\left(X_1^{(j)}\right)
    - m_j\left(X_{\pi(1)}^{(j)}\right)\right)^2
    \!-\!I(j)\right\}\!\left\{
     \left(\nonumber
     m_j\left(X_2^{(j)}\right)
    - m_j\left(X_{\pi(2)}^{(j)}\right)\right)^2
    \!-\!I(j)\right\}\\
        &
     \left\{
     \left(\nonumber
     m_j\left(X_3^{(j)}\right)
    - m_j\left(X_{\pi(3)}^{(j)}\right)\right)^2
    \!-\!I(j)\right\}\!
     \left\{
     \left(\nonumber
     m_j\left(X_{4}^{(j)}\right)
    - m_j\left(X_{\pi(4)}^{(j)}\right)\right)^2
   \! -\!I(j)\right\}\\
      &
    \nonumber
     \left\{\!
     \left(\nonumber
     m_j\left(X_{5}^{(j)}\right)
    - m_j\left(X_{\pi(5)}^{(j)}\right)\right)^2
    \!-\!I(j)\right\}\!
         \left\{
     \left(\nonumber
     m_j\left(X_{6}^{(j)}\right)
    - m_j\left(X_{\pi(6)}^{(j)}\right)\right)^2
    \!-\!I(j)\right\}\!
    \bigg].
\end{align}

\noindent Continuing with the calculation of expectations, we again rely on some basic principles of combinatorics. In particular, we consider derangements to ensure that no single factor of the product can be isolated, as this would affect the independence of the other factors. This is because each individual factor is centered. Moreover, we 
exploit the symmetry of the product, which implies a constant factor in front of the probabilities. To keep the proofs simple, we only specify these factors as finite and positive. In this context, the following applies to the constants $\widetilde c_1,\ldots\widetilde c_{31}$:

\begin{align*}
        &\mathbb{E}\bigg[
     \left\{
     \left(\nonumber
     m_j\left(X_1^{(j)}\right)
    - m_j\left(X_{\pi(1)}^{(j)}\right)\right)^2+2\varepsilon_1\left(\nonumber
     m_j\left(X_1^{(j)}\right)
    - m_j\left(X_{\pi(1)}^{(j)}\right)\right)
    -I(j)\right\}^3\\
      &\nonumber
     \left\{
     \left(\nonumber
     m_j\left(X_2^{(j)}\right)
    - m_j\left(X_{\pi(2)}^{(j)}\right)\right)^2
    -I(j)\right\}
     \left\{
     \left(\nonumber
     m_j\left(X_3^{(j)}\right)
    - m_j\left(X_{\pi(3)}^{(j)}\right)\right)^2
    -I(j)\right\}\\
          &
\quad\quad\quad\quad
    \quad\quad\quad\quad
    \quad\quad\quad\quad
    \quad\quad\quad\quad
    \nonumber
     \left\{
     \left(\nonumber
     m_j\left(X_{4}^{(j)}\right)
    - m_j\left(X_{\pi(4)}^{(j)}\right)\right)^2
    -I(j)\right\}
    \bigg]\\
        &=
    \widetilde c_1
    \mathbb{P}\left(
    \pi(1)=2,\pi(2)\notin\{3,4\},\pi(3)=4,\pi(4)\notin\{1,2\}
    \right)\nonumber\\
      &
      \mathbb{E}\bigg[
     \left\{
     \left(\nonumber
     m_j\left(X_1^{(j)}\right)
    - m_j\left(X_{2}^{(j)}\right)\right)^2+2\varepsilon_1\left(\nonumber
     m_j\left(X_1^{(j)}\right)
    - m_j\left(X_{2}^{(j)}\right)\right)
    -I(j)\right\}^3\\
    &
    \nonumber
     \left\{
     \left(\nonumber
     m_j\left(X_2^{(j)}\right)
    - m_j\left(X_{\pi(2)}^{(j)}\right)\right)^2
    -I(j)\right\}
     \left\{
     \left(\nonumber
     m_j\left(X_3^{(j)}\right)
    - m_j\left(X_{4}^{(j)}\right)\right)^2
    -I(j)\right\}\\
      &
    \quad\quad\quad\quad
    \nonumber
     \left\{
     \left(\nonumber
     m_j\left(X_{4}^{(j)}\right)
    - m_j\left(X_{\pi(4)}^{(j)}\right)\right)^2
    -I(j)\right\}
    \bigg| \pi(2)\notin\{3,4\}
,\pi(4)\notin\{1,2\}
    \bigg]\\[0.3em]
      &+
    \widetilde c_2
    \mathbb{P}\left(
    \pi(1)\notin\{3,4\},\pi(2)=1,\pi(3)=4,\pi(4)\notin\{1,2\}
    \right)\nonumber\\
     &
      \mathbb{E}\bigg[
     \left\{
     \left(\nonumber
     m_j\left(X_1^{(j)}\right)
    - m_j\left(X_{\pi(1)}^{(j)}\right)\right)^2+2\varepsilon_1\left(\nonumber
     m_j\left(X_1^{(j)}\right)
    - m_j\left(X_{\pi(1)}^{(j)}\right)\right)
    -I(j)\right\}^3\\
    &
    \nonumber
     \left\{
     \left(\nonumber
     m_j\left(X_2^{(j)}\right)
    - m_j\left(X_{1}^{(j)}\right)\right)^2
    -I(j)\right\}
     \left\{
     \left(\nonumber
     m_j\left(X_3^{(j)}\right)
    - m_j\left(X_{4}^{(j)}\right)\right)^2
    -I(j)\right\}\\
      &
    \nonumber
        \quad\quad\quad\quad
     \left\{
     \left(\nonumber
     m_j\left(X_{4}^{(j)}\right)
    - m_j\left(X_{\pi(4)}^{(j)}\right)\right)^2
    -I(j)\right\}
    \bigg| \pi(1)\notin\{3,4\}
,\pi(4)\notin\{1,2\}
    \bigg]\\
         &+
     \widetilde c_3
    \mathbb{P}\left(
    \pi(1)\notin\{2,3,4\},\pi(2)=3,\pi(3)=4,\pi(4)\neq 1
    \right)\nonumber\\
     &
      \mathbb{E}\bigg[
     \left\{
     \left(\nonumber
     m_j\left(X_1^{(j)}\right)
    - m_j\left(X_{\pi(1)}^{(j)}\right)\right)^2+2\varepsilon_1\left(\nonumber
     m_j\left(X_1^{(j)}\right)
    - m_j\left(X_{\pi(1)}^{(j)}\right)\right)
    -I(j)\right\}^3\\
    &
    \nonumber
     \left\{
     \left(\nonumber
     m_j\left(X_2^{(j)}\right)
    - m_j\left(X_{3}^{(j)}\right)\right)^2
    -I(j)\right\}
     \left\{
     \left(\nonumber
     m_j\left(X_3^{(j)}\right)
    - m_j\left(X_{4}^{(j)}\right)\right)^2
    -I(j)\right\}\\
      &
\quad\quad\quad\quad
    \nonumber
     \left\{
     \left(\nonumber
     m_j\left(X_{4}^{(j)}\right)
    - m_j\left(X_{\pi(4)}^{(j)}\right)\right)^2
    -I(j)\right\}
    \bigg| \pi(1)\notin\{3,4\}
,\pi(4)\notin\{1,2\}
    \bigg]\\
       &+
     \widetilde c_4
    \mathbb{P}\left(
    \pi(1)=2,\pi(2)=3,\pi(3)\neq 4,\pi(4)= 1
    \right)\nonumber\\
     &
      \mathbb{E}\bigg[
     \left\{
     \left(\nonumber
     m_j\left(X_1^{(j)}\right)
    - m_j\left(X_{2}^{(j)}\right)\right)^2+2\varepsilon_1\left(\nonumber
     m_j\left(X_1^{(j)}\right)
    - m_j\left(X_{2}^{(j)}\right)\right)
    -I(j)\right\}^3\\
        &
    \nonumber
     \left\{
     \left(\nonumber
     m_j\left(X_2^{(j)}\right)
    - m_j\left(X_{3}^{(j)}\right)\right)^2
    -I(j)\right\}
     \left\{
     \left(\nonumber
     m_j\left(X_3^{(j)}\right)
    - m_j\left(X_{\pi(3)}^{(j)}\right)\right)^2
    -I(j)\right\}\\
      &
\end{align*}

\begin{align*}
&\quad\quad\quad\quad
    \nonumber
     \left\{
     \left(\nonumber
     m_j\left(X_{4}^{(j)}\right)
    - m_j\left(X_{1}^{(j)}\right)\right)^2
    -I(j)\right\}
    \bigg| \pi(3)\neq 4
    \bigg]\\
     &+
     \widetilde c_5
  \mathbb{P}\left(
    \pi(1)=4,\pi(2)=3,\pi(3)=1,\pi(4)\notin\{2,3\}
    \right)\nonumber\\
     &
      \mathbb{E}\bigg[
     \left\{
     \left(\nonumber
     m_j\left(X_1^{(j)}\right)
    - m_j\left(X_{4}^{(j)}\right)\right)^2+2\varepsilon_1\left(\nonumber
     m_j\left(X_1^{(j)}\right)
    - m_j\left(X_{4}^{(j)}\right)\right)
    -I(j)\right\}^3\\
    &
    \nonumber
     \left\{
     \left(\nonumber
     m_j\left(X_2^{(j)}\right)
    - m_j\left(X_{3}^{(j)}\right)\right)^2
    -I(j)\right\}
     \left\{
     \left(\nonumber
     m_j\left(X_3^{(j)}\right)
    - m_j\left(X_{1}^{(j)}\right)\right)^2
    -I(j)\right\}\\
      &
\quad\quad\quad\quad
    \nonumber
     \left\{
     \left(\nonumber
     m_j\left(X_{4}^{(j)}\right)
    - m_j\left(X_{\pi(4)}^{(j)}\right)\right)^2
    -I(j)\right\}
    \bigg|\pi(4)\notin\{2,3\}
    \bigg]\\
    &+
     \widetilde c_6
  \mathbb{P}\left(
    \pi(1)=2,\pi(2)=3,\pi(3)=4,\pi(4)\notin\{2,3\}
    \right)\nonumber\\
     &
      \mathbb{E}\bigg[
     \left\{
     \left(\nonumber
     m_j\left(X_1^{(j)}\right)
    - m_j\left(X_{2}^{(j)}\right)\right)^2+2\varepsilon_1\left(\nonumber
     m_j\left(X_1^{(j)}\right)
    - m_j\left(X_{2}^{(j)}\right)\right)
    -I(j)\right\}^3\\
    &
    \nonumber
     \left\{
     \left(\nonumber
     m_j\left(X_2^{(j)}\right)
    - m_j\left(X_{3}^{(j)}\right)\right)^2
    -I(j)\right\}
     \left\{
     \left(\nonumber
     m_j\left(X_3^{(j)}\right)
    - m_j\left(X_{4}^{(j)}\right)\right)^2
    -I(j)\right\}\\
      &
\quad\quad\quad\quad
    \nonumber
     \left\{
     \left(\nonumber
     m_j\left(X_{4}^{(j)}\right)
    - m_j\left(X_{\pi(4)}^{(j)}\right)\right)^2
    -I(j)\right\}
    \bigg|\pi(4)\notin\{2,3\}
    \bigg]\\
        &+
     \widetilde c_7
  \mathbb{P}\left(
    \pi(1)\neq 4,\pi(2)=1,\pi(3)=2,\pi(4)=3
    \right)\nonumber\\
     &
      \mathbb{E}\bigg[
     \left\{
     \left(\nonumber
     m_j\left(X_1^{(j)}\right)
    - m_j\left(X_{\pi(1)}^{(j)}\right)\right)^2+2\varepsilon_1\left(\nonumber
     m_j\left(X_1^{(j)}\right)
    - m_j\left(X_{\pi(1)}^{(j)}\right)\right)
    -I(j)\right\}^3\\
    &
    \nonumber
     \left\{
     \left(\nonumber
     m_j\left(X_2^{(j)}\right)
    - m_j\left(X_{1}^{(j)}\right)\right)^2
    -I(j)\right\}
     \left\{
     \left(\nonumber
     m_j\left(X_3^{(j)}\right)
    - m_j\left(X_{2}^{(j)}\right)\right)^2
    -I(j)\right\}\\
      &
\quad\quad\quad\quad
    \nonumber
     \left\{
     \left(\nonumber
     m_j\left(X_{4}^{(j)}\right)
    - m_j\left(X_{3}^{(j)}\right)\right)^2
    -I(j)\right\}
    \bigg|\pi(1)\neq 4
    \bigg]\\
     &+
     \widetilde c_8
  \mathbb{P}\left(
    \pi(1)= 4,\pi(2)=1,\pi(3)=2,\pi(4)=3
    \right)\nonumber\\
     &
      \mathbb{E}\bigg[
     \left\{
     \left(\nonumber
     m_j\left(X_1^{(j)}\right)
    - m_j\left(X_{4}^{(j)}\right)\right)^2+2\varepsilon_1\left(\nonumber
     m_j\left(X_1^{(j)}\right)
    - m_j\left(X_{4}^{(j)}\right)\right)
    -I(j)\right\}^3\\
    &
    \nonumber
     \left\{
     \left(\nonumber
     m_j\left(X_2^{(j)}\right)
    - m_j\left(X_{1}^{(j)}\right)\right)^2
    -I(j)\right\}
     \left\{
     \left(\nonumber
     m_j\left(X_3^{(j)}\right)
    - m_j\left(X_{2}^{(j)}\right)\right)^2
    -I(j)\right\}\\
      &
\quad\quad\quad\quad
    \nonumber
     \left\{
     \left(\nonumber
     m_j\left(X_{4}^{(j)}\right)
    - m_j\left(X_{3}^{(j)}\right)\right)^2
    -I(j)\right\}
    \bigg]\\
    &=\mathcal{O}\left(\frac{1}{\gamma_n^2}\right),\nonumber
\end{align*}

\noindent since

\begin{align*}
    &\max\Big\{\mathbb{P}\left(
    \pi(1)=2,\pi(2)\notin\{3,4\},\pi(3)=4,\pi(4)\notin\{1,2\}
    \right),~\\
    &~
    \mathbb{P}\left(
    \pi(1)\notin\{3,4\},\pi(2)=1,\pi(3)=4,\pi(4)\notin\{1,2\}
    \right),\\
    &~
    \mathbb{P}\left(
    \pi(1)\notin\{2,3,4\},\pi(2)=3,\pi(3)=4,\pi(4)\neq 1
    \right)\nonumber,
    ~\\
    &~\mathbb{P}\left(
    \pi(1)=2,\pi(2)=3,\pi(3)\neq 4,\pi(4)= 1
    \right),\nonumber\\
    &
       ~ \mathbb{P}\left(
    \pi(1)=4,\pi(2)=3,\pi(3)=1,\pi(4)\notin\{2,3\}
    \right),\\
    &~
     \mathbb{P}\left(
    \pi(1)=2,\pi(2)=3,\pi(3)=4,\pi(4)\notin\{2,3\}
    \right),\nonumber\\
    &~
     \mathbb{P}\left(
    \pi(1)\neq 4,\pi(2)=1,\pi(3)=2,\pi(4)=3
    \right),
    ~ \\
    &~ \mathbb{P}\left(
    \pi(1)= 4,\pi(2)=1,\pi(3)=2,\pi(4)=3
    \right)\Big\}\\
    &\leq
    \mathbb{P}\left(
    \pi(1)=2,\pi(2)=3
    \right)=\frac{!(\gamma_n-2)}{!\gamma_n}
    =
    \mathcal{O}\left(\frac{1}{\gamma_n^2}\right)
\end{align*}

\noindent as in (\ref{theorem 2.2 permu vereinigung}) or (\ref{theorem 2.2 permu schnitt}).
Analogously, the second four fold sum  is $\mathcal{O}\left(\frac{1}{\gamma_n}\right)$,
as we can see with

\begin{align*}
      &\mathbb{E}\bigg[
     \left\{
     \left(\nonumber
     m_j\left(X_1^{(j)}\right)
    - m_j\left(X_{\pi(1)}^{(j)}\right)\right)^2+2\varepsilon_1\left(\nonumber
     m_j\left(X_1^{(j)}\right)
    - m_j\left(X_{\pi(1)}^{(j)}\right)\right)
    -I(j)\right\}^2\\
    &
    \nonumber
     \left\{
     \left(\nonumber
     m_j\left(X_2^{(j)}\right)
    - m_j\left(X_{\pi(2)}^{(j)}\right)\right)^2+2\varepsilon_2\left(\nonumber
     m_j\left(X_2^{(j)}\right)
    - m_j\left(X_{\pi(2)}^{(j)}\right)\right)
    -I(j)\right\}^2\\  
      &
    \nonumber
     \left\{
     \left(\nonumber
     m_j\left(X_3^{(j)}\right)
    - m_j\left(X_{\pi(3)}^{(j)}\right)\right)^2
    \!-\!I(j)\right\}\!
     \left\{
     \left(\nonumber
     m_j\left(X_{4}^{(j)}\right)
    - m_j\left(X_{\pi(4)}^{(j)}\right)\right)^2
    \!-\!I(j)\right\}\!
    \bigg]\\
  &=
   \widetilde c_{10}\mathbb{P}\left(\pi(1)=3,\pi(2)=4,
    \pi(3)\notin\{2,4\},\pi(4)\notin\{1,3\}
    \right)\\
    &  
    \mathbb{E}\bigg[
     \left\{
     \left(\nonumber
     m_j\left(X_1^{(j)}\right)
    - m_j\left(X_{3}^{(j)}\right)\right)^2+2\varepsilon_1\left(\nonumber
     m_j\left(X_1^{(j)}\right)
    - m_j\left(X_{3}^{(j)}\right)\right)
    -I(j)\right\}^2\\
    &
    \nonumber
     \left\{
     \left(\nonumber
     m_j\left(X_2^{(j)}\right)
    - m_j\left(X_{4}^{(j)}\right)\right)^2+2\varepsilon_2\left(\nonumber
     m_j\left(X_2^{(j)}\right)
    - m_j\left(X_{4}^{(j)}\right)\right)
    -I(j)\right\}^2\\  
      &
    \nonumber
     \left\{
     \left(\nonumber
     m_j\left(X_3^{(j)}\right)
    - m_j\left(X_{\pi(3)}^{(j)}\right)\right)^2
    -I(j)\right\}\\
    &
    \quad\quad\quad
     \left\{
     \left(\nonumber
     m_j\left(X_{4}^{(j)}\right)
    - m_j\left(X_{\pi(4)}^{(j)}\right)\right)^2
    -I(j)\right\} \bigg| \pi(3)\notin\{2,4\},\pi(4)\notin\{1,3\}
    \bigg]\\
          &+
   \widetilde c_{11}\mathbb{P}\left(\pi(1)=3,\pi(2)\notin\{1,3\},
    \pi(3)\notin\{2,4\},\pi(4)=2
    \right)\\
    &  
    \mathbb{E}\bigg[
     \left\{
     \left(\nonumber
     m_j\left(X_1^{(j)}\right)
    - m_j\left(X_{3}^{(j)}\right)\right)^2+2\varepsilon_1\left(\nonumber
     m_j\left(X_1^{(j)}\right)
    - m_j\left(X_{3}^{(j)}\right)\right)
    -I(j)\right\}^2\\
    &
    \nonumber
     \left\{
     \left(\nonumber
     m_j\left(X_2^{(j)}\right)
    - m_j\left(X_{\pi(2)}^{(j)}\right)\right)^2+2\varepsilon_2\left(\nonumber
     m_j\left(X_2^{(j)}\right)
    - m_j\left(X_{\pi(2)}^{(j)}\right)\right)
    -I(j)\right\}^2\\  
          &
    \nonumber
     \left\{
     \left(\nonumber
     m_j\left(X_3^{(j)}\right)
    - m_j\left(X_{\pi(3)}^{(j)}\right)\right)^2
    -I(j)\right\}
\end{align*}

\begin{align*}  
    &
    \quad\quad\quad
     \left\{
     \left(\nonumber
     m_j\left(X_{4}^{(j)}\right)
    - m_j\left(X_{2}^{(j)}\right)\right)^2
    -I(j)\right\} \bigg|\pi(2)\notin\{1,3\}, \pi(3)\notin\{2,4\}
    \bigg]\\[0.5em]
       &+
   \widetilde c_{12}\mathbb{P}\left(\pi(1)\notin\{2,4\},\pi(2)\notin\{1,3\},
    \pi(3)=1,\pi(4)=2
    \right)\\
    &
    \mathbb{E}\bigg[
     \left\{
     \left(\nonumber
     m_j\left(X_1^{(j)}\right)
    - m_j\left(X_{\pi(1)}^{(j)}\right)\right)^2+2\varepsilon_1\left(\nonumber
     m_j\left(X_1^{(j)}\right)
    - m_j\left(X_{\pi(1)}^{(j)}\right)\right)
    -I(j)\right\}^2\\
    &
    \nonumber
     \left\{
     \left(\nonumber
     m_j\left(X_2^{(j)}\right)
    - m_j\left(X_{\pi(2)}^{(j)}\right)\right)^2+2\varepsilon_2\left(\nonumber
     m_j\left(X_2^{(j)}\right)
    - m_j\left(X_{\pi(2)}^{(j)}\right)\right)
    -I(j)\right\}^2\\  
      &
    \nonumber
     \left\{
     \left(\nonumber
     m_j\left(X_3^{(j)}\right)
    - m_j\left(X_{1}^{(j)}\right)\right)^2
    -I(j)\right\}\\
    &
    \quad\quad\quad
     \left\{
     \left(\nonumber
     m_j\left(X_{4}^{(j)}\right)
    - m_j\left(X_{2}^{(j)}\right)\right)^2
    -I(j)\right\} \bigg|\pi(1)\notin\{2,4\},\pi(2)\notin\{1,3\}
    \bigg]\\
           &+
   \widetilde c_{13}\mathbb{P}\left(\pi(1)\notin\{2,3,4\},\pi(2)=3,
    \pi(3)=4,\pi(4)\neq 1
    \right)\\
    & 
    \mathbb{E}\bigg[
     \left\{
     \left(\nonumber
     m_j\left(X_1^{(j)}\right)
    - m_j\left(X_{\pi(1)}^{(j)}\right)\right)^2+2\varepsilon_1\left(\nonumber
     m_j\left(X_1^{(j)}\right)
    - m_j\left(X_{\pi(1)}^{(j)}\right)\right)
    -I(j)\right\}^2\\
    &
    \nonumber
     \left\{
     \left(\nonumber
     m_j\left(X_2^{(j)}\right)
    - m_j\left(X_{3}^{(j)}\right)\right)^2+2\varepsilon_2\left(\nonumber
     m_j\left(X_2^{(j)}\right)
    - m_j\left(X_{3}^{(j)}\right)\right)
    -I(j)\right\}^2\\  
      &
    \nonumber
     \left\{
     \left(\nonumber
     m_j\left(X_3^{(j)}\right)
    - m_j\left(X_{4}^{(j)}\right)\right)^2
    -I(j)\right\}\\
    &
    \quad\quad\quad
     \left\{
     \left(\nonumber
     m_j\left(X_{4}^{(j)}\right)
    - m_j\left(X_{\pi(4)}^{(j)}\right)\right)^2
    -I(j)\right\} \bigg|\pi(1)\notin\{2,3,4\},\pi(4)\neq 1
    \bigg]\\
               &+
   \widetilde c_{14}\mathbb{P}\left(\pi(1)\notin\{2,3,4\},\pi(2)\neq 1,
    \pi(3)=4,\pi(4)=2
    \right)\\
    &   
    \mathbb{E}\bigg[
     \left\{
     \left(\nonumber
     m_j\left(X_1^{(j)}\right)
    - m_j\left(X_{\pi(1)}^{(j)}\right)\right)^2+2\varepsilon_1\left(\nonumber
     m_j\left(X_1^{(j)}\right)
    - m_j\left(X_{\pi(1)}^{(j)}\right)\right)
    -I(j)\right\}^2\\
    &
    \nonumber
     \left\{
     \left(\nonumber
     m_j\left(X_2^{(j)}\right)
    - m_j\left(X_{\pi(2)}^{(j)}\right)\right)^2+2\varepsilon_2\left(\nonumber
     m_j\left(X_2^{(j)}\right)
    - m_j\left(X_{\pi(2)}^{(j)}\right)\right)
    -I(j)\right\}^2\\  
      &
    \nonumber
     \left\{
     \left(\nonumber
     m_j\left(X_3^{(j)}\right)
    - m_j\left(X_{4}^{(j)}\right)\right)^2
    -I(j)\right\}\\
    &
    \quad\quad\quad
     \left\{
     \left(\nonumber
     m_j\left(X_{4}^{(j)}\right)
    - m_j\left(X_{2}^{(j)}\right)\right)^2
    -I(j)\right\} \bigg|\pi(1)\notin\{2,3,4\},\pi(2)\neq 1
    \bigg]\\
                  &+
   \widetilde c_{15}\mathbb{P}\left(\pi(1)\notin\{2,3,4\},\pi(2)=3,
    \pi(3)=4,\pi(4)=2
    \right)\\
    &
    \mathbb{E}\bigg[
     \left\{
     \left(\nonumber
     m_j\left(X_1^{(j)}\right)
    - m_j\left(X_{\pi(1)}^{(j)}\right)\right)^2+2\varepsilon_1\left(\nonumber
     m_j\left(X_1^{(j)}\right)
    - m_j\left(X_{\pi(1)}^{(j)}\right)\right)
    -I(j)\right\}^2\\
    &
    \nonumber
     \left\{
     \left(\nonumber
     m_j\left(X_2^{(j)}\right)
    - m_j\left(X_{3}^{(j)}\right)\right)^2+2\varepsilon_2\left(\nonumber
     m_j\left(X_2^{(j)}\right)
    - m_j\left(X_{3}^{(j)}\right)\right)
    -I(j)\right\}^2\\  
      &
    \nonumber
     \left\{
     \left(\nonumber
     m_j\left(X_3^{(j)}\right)
    - m_j\left(X_{4}^{(j)}\right)\right)^2
    -I(j)\right\}\\
    &
    \quad\quad\quad
     \left\{
     \left(\nonumber
     m_j\left(X_{4}^{(j)}\right)
    - m_j\left(X_{2}^{(j)}\right)\right)^2
    -I(j)\right\} \bigg|\pi(1)\notin\{2,3,4\}
    \bigg]
    \end{align*}

    \begin{align*}
      &+
   \widetilde c_{16}\mathbb{P}\left(\pi(1)\notin\{2,3,4\},\pi(2)=3,
    \pi(3)\notin\{1,4\},\pi(4)=2
    \right)\\
    &
    \mathbb{E}\bigg[
     \left\{
     \left(\nonumber
     m_j\left(X_1^{(j)}\right)
    - m_j\left(X_{\pi(1)}^{(j)}\right)\right)^2+2\varepsilon_1\left(\nonumber
     m_j\left(X_1^{(j)}\right)
    - m_j\left(X_{\pi(1)}^{(j)}\right)\right)
    -I(j)\right\}^2\\
    &
    \nonumber
     \left\{
     \left(\nonumber
     m_j\left(X_2^{(j)}\right)
    - m_j\left(X_{3}^{(j)}\right)\right)^2+2\varepsilon_2\left(\nonumber
     m_j\left(X_2^{(j)}\right)
    - m_j\left(X_{3}^{(j)}\right)\right)
    -I(j)\right\}^2\\  
      &
    \nonumber
     \left\{
     \left(\nonumber
     m_j\left(X_3^{(j)}\right)
    - m_j\left(X_{\pi(3)}^{(j)}\right)\right)^2
    -I(j)\right\}\\
    &
    \quad\quad\quad
     \left\{
     \left(\nonumber
     m_j\left(X_{4}^{(j)}\right)
    - m_j\left(X_{2}^{(j)}\right)\right)^2
    -I(j)\right\} \bigg|\pi(1)\notin\{2,3,4\},\pi(3)\notin\{1,4\}
    \bigg]\\
      &+
   \widetilde c_{17}\mathbb{P}\left(\pi(1)=2,\pi(2)=3,
    \pi(3)=4,\pi(4)\notin\{1,2\}
    \right)\\
    & 
    \mathbb{E}\bigg[
     \left\{
     \left(\nonumber
     m_j\left(X_1^{(j)}\right)
    - m_j\left(X_{2}^{(j)}\right)\right)^2+2\varepsilon_1\left(\nonumber
     m_j\left(X_1^{(j)}\right)
    - m_j\left(X_{2}^{(j)}\right)\right)
    -I(j)\right\}^2\\
    &
    \nonumber
     \left\{
     \left(\nonumber
     m_j\left(X_2^{(j)}\right)
    - m_j\left(X_{3}^{(j)}\right)\right)^2+2\varepsilon_2\left(\nonumber
     m_j\left(X_2^{(j)}\right)
    - m_j\left(X_{3}^{(j)}\right)\right)
    -I(j)\right\}^2\\  
      &
    \nonumber
     \left\{
     \left(\nonumber
     m_j\left(X_3^{(j)}\right)
    - m_j\left(X_{4}^{(j)}\right)\right)^2
    -I(j)\right\}\\
    &
    \quad\quad\quad
     \left\{
     \left(\nonumber
     m_j\left(X_{4}^{(j)}\right)
    - m_j\left(X_{\pi(4)}^{(j)}\right)\right)^2
    -I(j)\right\} \bigg|\pi(4)\notin\{1,2\}
    \bigg]\\
        &+
   \widetilde c_{18}\mathbb{P}\left(\pi(1)=2,\pi(2)=3,
    \pi(3)=4,\pi(4)=1
    \right)\\
    &\quad   
    \mathbb{E}\bigg[
     \left\{
     \left(\nonumber
     m_j\left(X_1^{(j)}\right)
    - m_j\left(X_{2}^{(j)}\right)\right)^2+2\varepsilon_1\left(\nonumber
     m_j\left(X_1^{(j)}\right)
    - m_j\left(X_{2}^{(j)}\right)\right)
    -I(j)\right\}^2\\
    &
    \nonumber
     \left\{
     \left(\nonumber
     m_j\left(X_2^{(j)}\right)
    - m_j\left(X_{3}^{(j)}\right)\right)^2+2\varepsilon_2\left(\nonumber
     m_j\left(X_2^{(j)}\right)
    - m_j\left(X_{3}^{(j)}\right)\right)
    -I(j)\right\}^2\\  
      &
    \nonumber
     \left\{
     \left(\nonumber
     m_j\left(X_3^{(j)}\right)
    - m_j\left(X_{4}^{(j)}\right)\right)^2
    -I(j)\right\}
     \left\{
     \left(\nonumber
     m_j\left(X_{4}^{(j)}\right)
    - m_j\left(X_{1}^{(j)}\right)\right)^2
    -I(j)\right\} 
    \bigg]\\
    &=\mathcal{O}\left(\frac{1}{\gamma_n}\right),
\end{align*}

\noindent because all the expectations are finite and

\begin{align*}
    \max\bigg\{&\mathbb{P}\left(\pi(1)\notin\{3,4\},\pi(2)\notin\{3,4\},
    \pi(3)=4,\pi(4)\notin\{1,2\}
    \right),~ \\
    &\mathbb{P}\left(\pi(1)=3,\pi(2)=4,
    \pi(3)\notin\{2,4\},\pi(4)\notin\{1,3\}
    \right),\\
    &\mathbb{P}\left(\pi(1)=3,\pi(2)\notin\{1,3\},
    \pi(3)\notin\{2,4\},\pi(4)=2
    \right),~\\
     &
    \mathbb{P}\left(\pi(1)\notin\{2,4\},\pi(2)\notin\{1,3\},
    \pi(3)=1,\pi(4)=2
    \right),\\
    &
    \mathbb{P}\left(\pi(1)\notin\{2,3,4\},\pi(2)=3,
    \pi(3)=4,\pi(4)\neq 1
    \right),~
    \end{align*}

    \begin{align*} 
        &
    \mathbb{P}\left(\pi(1)\notin\{2,3,4\},\pi(2)\neq 1,
    \pi(3)=4,\pi(4)=2
    \right)\\
        &\mathbb{P}\left(\pi(1)\notin\{2,3,4\},\pi(2)=3,
    \pi(3)=4,\pi(4)=2
    \right),~\\
    &
    \mathbb{P}\left(\pi(1)\notin\{2,3,4\},\pi(2)=3,
    \pi(3)\notin\{1,4\},\pi(4)=2
    \right)\\
    &
    \mathbb{P}\left(\pi(1)=2,\pi(2)=3,
    \pi(3)=4,\pi(4)\notin\{1,2\}
    \right),~\\
    &
    \mathbb{P}\left(\pi(1)=2,\pi(2)=3,
    \pi(3)=4,\pi(4)=1
    \right)\bigg\}\\
    &\leq
    \mathbb{P}\left(
    \pi(3)=4
    \right)
    =
    \mathcal{O}\left(\frac{1}{\gamma_n}\right).
\end{align*}

\noindent In addition
\begin{align*}
      &\mathbb{E}\bigg[
     \!\left\{ \!
     \left(\nonumber
     m_j\left(X_1^{(j)}\right)
    - m_j\left(X_{\pi(1)}^{(j)}\right)\right)^2+2\varepsilon_1\left(\nonumber
     m_j\left(X_1^{(j)}\right)
    - m_j\left(X_{\pi(1)}^{(j)}\right)\right)
    \!-\!I(j)\right\}^2\\
    &
    \nonumber
     \left\{
     \left(\nonumber
     m_j\left(X_2^{(j)}\right)
    - m_j\left(X_{\pi(2)}^{(j)}\right)\right)^2
    \!-\!I(j)\right\}\!
     \left\{
     \left(\nonumber
     m_j\left(X_3^{(j)}\right)
    - m_j\left(X_{\pi(3)}^{(j)}\right)\right)^2
    \!-\!I(j)\right\}\\
      &
    \nonumber
     \left\{
     \left(\nonumber
     m_j\left(X_{4}^{(j)}\right)
    - m_j\left(X_{\pi(4)}^{(j)}\right)\right)^2
    \!-\!I(j)\right\}
     \left\{
     \left(\nonumber
     m_j\left(X_{5}^{(j)}\right)
    - m_j\left(X_{\pi(5)}^{(j)}\right)\right)^2
    \!-\!I(j)\right\}\!
    \bigg]\\[0.5em]
     &=
    \widetilde c_{19}\mathbb{P}\left(\pi(1)\notin\{2,3,4,5\},\pi(2)=3,
    \pi(3)\notin\{1,4,5\},\pi(4)=5,\pi(5)\notin\{1,2,3\}
    \right)\\
    &
    \mathbb{E}\bigg[
     \left\{
     \left(\nonumber
     m_j\left(X_1^{(j)}\right)
    - m_j\left(X_{\pi(1)}^{(j)}\right)\right)^2+2\varepsilon_1\left(\nonumber
     m_j\left(X_1^{(j)}\right)
    - m_j\left(X_{\pi(1)}^{(j)}\right)\right)
    -I(j)\right\}^2\\
    &
    \nonumber
     \left\{
     \left(\nonumber
     m_j\left(X_2^{(j)}\right)
    - m_j\left(X_{3}^{(j)}\right)\right)^2
    -I(j)\right\}
     \left\{
     \left(\nonumber
     m_j\left(X_3^{(j)}\right)
    - m_j\left(X_{\pi(3)}^{(j)}\right)\right)^2
    -I(j)\right\}\\
      &
    \nonumber
     \left\{
     \left(\nonumber
     m_j\left(X_{4}^{(j)}\right)
    - m_j\left(X_{5}^{(j)}\right)\right)^2
    -I(j)\right\} \left\{
     \left(\nonumber
     m_j\left(X_{5}^{(j)}\right)
    - m_j\left(X_{\pi(5)}^{(j)}\right)\right)^2
    -I(j)\right\}\\
    &
    \quad \bigg|
    \pi(1)\notin\{2,3,4,5\},
 \pi(3)\notin\{1,4,5\},
\pi(5)\notin\{1,2,3\}
    \bigg]\\
    &+
      \widetilde c_{20}\mathbb{P}\left(\pi(1)\notin\{2,3,4,5\},\pi(2)=3,
    \pi(3)=4,\pi(4)=5,\pi(5)\neq 1
    \right)\\
    &
    \mathbb{E}\bigg[
     \left\{
     \left(\nonumber
     m_j\left(X_1^{(j)}\right)
    - m_j\left(X_{\pi(1)}^{(j)}\right)\right)^2+2\varepsilon_1\left(\nonumber
     m_j\left(X_1^{(j)}\right)
    - m_j\left(X_{\pi(1)}^{(j)}\right)\right)
    -I(j)\right\}^2\\
    &
    \nonumber
     \left\{
     \left(\nonumber
     m_j\left(X_2^{(j)}\right)
    - m_j\left(X_{3}^{(j)}\right)\right)^2
    -I(j)\right\}
     \left\{
     \left(\nonumber
     m_j\left(X_3^{(j)}\right)
    - m_j\left(X_{4}^{(j)}\right)\right)^2
    -I(j)\right\}\\
      &
    \nonumber
     \left\{
     \left(\nonumber
     m_j\left(X_{4}^{(j)}\right)
    - m_j\left(X_{5}^{(j)}\right)\right)^2
    -I(j)\right\}\\
    &
     \left\{
     \left(\nonumber
     m_j\left(X_{5}^{(j)}\right)
    - m_j\left(X_{\pi(5)}^{(j)}\right)\right)^2
    -I(j)\right\}\bigg|
    \pi(1)\notin\{2,3,4,5\},
\pi(5)\notin\{1\}
    \bigg]\\
        &+
     \widetilde c_{21}\mathbb{P}\left(\pi(1)\notin\{3,4,5\},\pi(2)=1,
    \pi(3)=4,\pi(4)=5,\pi(5)\notin\{1,2\}
    \right)\\
    &
    \mathbb{E}\bigg[
     \left\{
     \left(\nonumber
     m_j\left(X_1^{(j)}\right)
    - m_j\left(X_{\pi(1)}^{(j)}\right)\right)^2+2\varepsilon_1\left(\nonumber
     m_j\left(X_1^{(j)}\right)
    - m_j\left(X_{\pi(1)}^{(j)}\right)\right)
    -I(j)\right\}^2
\end{align*}

\begin{align*}
    &
    \nonumber
     \left\{
     \left(\nonumber
     m_j\left(X_2^{(j)}\right)
    - m_j\left(X_{1}^{(j)}\right)\right)^2
    -I(j)\right\}
     \left\{
     \left(\nonumber
     m_j\left(X_3^{(j)}\right)
    - m_j\left(X_{4}^{(j)}\right)\right)^2
    -I(j)\right\}\\
      &
    \nonumber
     \left\{
     \left(\nonumber
     m_j\left(X_{4}^{(j)}\right)
    - m_j\left(X_{5}^{(j)}\right)\right)^2
    -I(j)\right\}\\
    &
     \left\{
     \left(\nonumber
     m_j\left(X_{5}^{(j)}\right)
    - m_j\left(X_{\pi(5)}^{(j)}\right)\right)^2
    -I(j)\right\}\bigg|
    \pi(1)\notin\{3,4,5\},
\pi(5)\notin\{1,2,3\}
    \bigg]\\
     &+
     \widetilde c_{22}\mathbb{P}\left(\pi(1)=2,\pi(2)\notin\{3,4,5\},
    \pi(3)=4,\pi(4)=5,\pi(5)\notin\{1,2\}
    \right)\\
    &
    \mathbb{E}\bigg[
     \left\{
     \left(\nonumber
     m_j\left(X_1^{(j)}\right)
    - m_j\left(X_{2}^{(j)}\right)\right)^2+2\varepsilon_1\left(\nonumber
     m_j\left(X_1^{(j)}\right)
    - m_j\left(X_{2}^{(j)}\right)\right)
    -I(j)\right\}^2\\
    &
    \nonumber
     \left\{
     \left(\nonumber
     m_j\left(X_2^{(j)}\right)
    - m_j\left(X_{\pi(2)}^{(j)}\right)\right)^2
    -I(j)\right\}
     \left\{
     \left(\nonumber
     m_j\left(X_3^{(j)}\right)
    - m_j\left(X_{4}^{(j)}\right)\right)^2
    -I(j)\right\}\\
      &
    \nonumber
     \left\{
     \left(\nonumber
     m_j\left(X_{4}^{(j)}\right)
    - m_j\left(X_{5}^{(j)}\right)\right)^2
    -I(j)\right\}\\
    &
     \left\{
     \left(\nonumber
     m_j\left(X_{5}^{(j)}\right)
    - m_j\left(X_{\pi(5)}^{(j)}\right)\right)^2
    -I(j)\right\}\bigg|
    \pi(2)\notin\{3,4,5\},
\pi(5)\notin\{1,2,3\}
    \bigg]\\
    &+
     \widetilde c_{23}\mathbb{P}\left(\pi(1)=2,\pi(2)=3,
    \pi(3)\notin\{4,5\},\pi(4)=5,\pi(5)\notin\{1,2,3\}
    \right)\\
    &
    \mathbb{E}\bigg[
     \left\{
     \left(\nonumber
     m_j\left(X_1^{(j)}\right)
    - m_j\left(X_{2}^{(j)}\right)\right)^2+2\varepsilon_1\left(\nonumber
     m_j\left(X_1^{(j)}\right)
    - m_j\left(X_{2}^{(j)}\right)\right)
    -I(j)\right\}^2\\
    &
    \nonumber
     \left\{
     \left(\nonumber
     m_j\left(X_2^{(j)}\right)
    - m_j\left(X_{3}^{(j)}\right)\right)^2
    -I(j)\right\}
     \left\{
     \left(\nonumber
     m_j\left(X_3^{(j)}\right)
    - m_j\left(X_{4}^{(j)}\right)\right)^2
    -I(j)\right\}\\
    &
    \nonumber
     \left\{
     \left(\nonumber
     m_j\left(X_{4}^{(j)}\right)
    - m_j\left(X_{5}^{(j)}\right)\right)^2
    -I(j)\right\}\\
    &
     \left\{
     \left(\nonumber
     m_j\left(X_{5}^{(j)}\right)
    - m_j\left(X_{\pi(5)}^{(j)}\right)\right)^2
    -I(j)\right\}\bigg|
    \pi(3)\notin\{4,5\},
\pi(5)\notin\{1,2,3\}
    \bigg]\\
     &+
     \widetilde c_{24}\mathbb{P}\left(\pi(1)\notin\{4,5\},\pi(2)=1,
    \pi(3)=2,\pi(4)=5,\pi(5)\notin\{1,2,3\}
    \right)\\
    &
    \mathbb{E}\bigg[
     \left\{
     \left(\nonumber
     m_j\left(X_1^{(j)}\right)
    - m_j\left(X_{\pi(1)}^{(j)}\right)\right)^2+2\varepsilon_1\left(\nonumber
     m_j\left(X_1^{(j)}\right)
    - m_j\left(X_{\pi(1)}^{(j)}\right)\right)
    -I(j)\right\}^2\\
    &
    \nonumber
     \left\{
     \left(\nonumber
     m_j\left(X_2^{(j)}\right)
    - m_j\left(X_{1}^{(j)}\right)\right)^2
    -I(j)\right\}
     \left\{
     \left(\nonumber
     m_j\left(X_3^{(j)}\right)
    - m_j\left(X_{2}^{(j)}\right)\right)^2
    -I(j)\right\}\\
    &
    \nonumber
     \left\{
     \left(\nonumber
     m_j\left(X_{4}^{(j)}\right)
    - m_j\left(X_{5}^{(j)}\right)\right)^2
    -I(j)\right\}\\
    &
     \left\{
     \left(\nonumber
     m_j\left(X_{5}^{(j)}\right)
    - m_j\left(X_{\pi(5)}^{(j)}\right)\right)^2
    -I(j)\right\}\bigg|
    \pi(1)\notin\{4,5\},
\pi(5)\notin\{1,2,3\}
    \bigg]\\
       &+
     \widetilde c_{25}\mathbb{P}\left(\pi(1)=2,\pi(2)=3,
    \pi(3)=4,\pi(4)=5,\pi(5)\neq 1
    \right)\\
    &
    \mathbb{E}\bigg[
     \left\{
     \left(\nonumber
     m_j\left(X_1^{(j)}\right)
    - m_j\left(X_{2}^{(j)}\right)\right)^2+2\varepsilon_1\left(\nonumber
     m_j\left(X_1^{(j)}\right)
    - m_j\left(X_{2}^{(j)}\right)\right)
    -I(j)\right\}^2\\
    &
    \nonumber
     \left\{
     \left(\nonumber
     m_j\left(X_2^{(j)}\right)
    - m_j\left(X_{3}^{(j)}\right)\right)^2
    -I(j)\right\}
     \left\{
     \left(\nonumber
     m_j\left(X_3^{(j)}\right)
    - m_j\left(X_{4}^{(j)}\right)\right)^2
    -I(j)\right\}\\
    &
    \nonumber
     \left\{
     \left(\nonumber
     m_j\left(X_{4}^{(j)}\right)
    - m_j\left(X_{5}^{(j)}\right)\right)^2
    -I(j)\right\}\\
     &\left\{
     \left(\nonumber
     m_j\left(X_{5}^{(j)}\right)
    - m_j\left(X_{\pi(5)}^{(j)}\right)\right)^2
    -I(j)\right\}\bigg|
    \pi(5)\neq 1
    \bigg]
    \end{align*}
    
    \begin{align*}
         &+
     \widetilde c_{26}\mathbb{P}\left(\pi(1)\notin\{2,3,4,5\},\pi(2)=1,
    \pi(3)=2,\pi(4)=3,\pi(5)=4
    \right)\\
    &
    \mathbb{E}\bigg[
     \left\{
     \left(\nonumber
     m_j\left(X_1^{(j)}\right)
    - m_j\left(X_{\pi(1)}^{(j)}\right)\right)^2+2\varepsilon_1\left(\nonumber
     m_j\left(X_1^{(j)}\right)
    - m_j\left(X_{\pi(1)}^{(j)}\right)\right)
    -I(j)\right\}^2\\
    &
    \nonumber
     \left\{
     \left(\nonumber
     m_j\left(X_2^{(j)}\right)
    - m_j\left(X_{1}^{(j)}\right)\right)^2
    -I(j)\right\}
     \left\{
     \left(\nonumber
     m_j\left(X_3^{(j)}\right)
    - m_j\left(X_{2}^{(j)}\right)\right)^2
    -I(j)\right\}\\
    &
\quad\quad
    \nonumber
     \left\{
     \left(\nonumber
     m_j\left(X_{4}^{(j)}\right)
    - m_j\left(X_{3}^{(j)}\right)\right)^2
    -I(j)\right\}\\
    &\quad\quad\quad\quad
     \left\{
     \left(\nonumber
     m_j\left(X_{5}^{(j)}\right)
    - m_j\left(X_{4}^{(j)}\right)\right)^2
    -I(j)\right\}\bigg|
    \pi(1)\notin\{2,3,4,5\}
    \bigg]\\
           &+
     \widetilde c_{27}\mathbb{P}\left(\pi(1)=2,\pi(2)=3,
    \pi(3)=4,\pi(4)=5,\pi(5)= 1
    \right)\\
    &
    \mathbb{E}\bigg[
     \left\{
     \left(\nonumber
     m_j\left(X_1^{(j)}\right)
    - m_j\left(X_{2}^{(j)}\right)\right)^2+2\varepsilon_1\left(\nonumber
     m_j\left(X_1^{(j)}\right)
    - m_j\left(X_{2}^{(j)}\right)\right)
    -I(j)\right\}^2\\
    &
    \nonumber
     \left\{
     \left(\nonumber
     m_j\left(X_2^{(j)}\right)
    - m_j\left(X_{3}^{(j)}\right)\right)^2
    -I(j)\right\}
     \left\{
     \left(\nonumber
     m_j\left(X_3^{(j)}\right)
    - m_j\left(X_{4}^{(j)}\right)\right)^2
    -I(j)\right\}\\
    &
    \nonumber
     \left\{
     \left(\nonumber
     m_j\left(X_{4}^{(j)}\right)
    - m_j\left(X_{5}^{(j)}\right)\right)^2
    -I(j)\right\}
     \left\{
     \left(\nonumber
     m_j\left(X_{5}^{(j)}\right)
    - m_j\left(X_{1}^{(j)}\right)\right)^2
    -I(j)\right\}
    \bigg]\\
    &=
    \mathcal{O}\left(\frac{1}{\gamma_n^2}\right),
\end{align*}

\noindent because
\begin{align*}
    \max\Big\{&\mathbb{P}\left(\pi(1)\notin\{2,3,4,5\},\pi(2)=3,
    \pi(3)\notin\{1,4,5\},\pi(4)=5,\pi(5)\notin\{1,2,3\}
    \right),\\
    &\mathbb{P}\left(\pi(1)\notin\{2,3,4,5\},\pi(2)=3,
    \pi(3)=4,\pi(4)=5,\pi(5)\neq 1
    \right),\\
    &\mathbb{P}\left(\pi(1)\notin\{3,4,5\},\pi(2)=1,
    \pi(3)=4,\pi(4)=5,\pi(5)\notin\{1,2\}
    \right),\\
    &\mathbb{P}\left(\pi(1)=2,\pi(2)=3,
    \pi(3)\notin\{4,5\},\pi(4)=5,\pi(5)\notin\{1,2,3\}
    \right),\\
    &\mathbb{P}\left(\pi(1)\notin\{4,5\},\pi(2)=1,
    \pi(3)=2,\pi(4)=5,\pi(5)\notin\{1,2,3\}
    \right),\\
    &\mathbb{P}\left(\pi(1)=2,\pi(2)=3,
    \pi(3)=4,\pi(4)=5,\pi(5)\neq 1
    \right)\\
    &\mathbb{P}\left(\pi(1)\notin\{2,3,4,5\},\pi(2)=1,
    \pi(3)=2,\pi(4)=3,\pi(5)=4
    \right),\\
    &\mathbb{P}\left(\pi(1)=2,\pi(2)=3,
    \pi(3)=4,\pi(4)=5,\pi(5)= 1
    \right)\bigg\}\\
    &\leq\mathbb{P}\left(\pi(2)=3,\pi(4)=5
    \right)=\frac{!(\gamma_n-2)}{!\gamma_n}
    =\mathcal{O}\left(\frac{1}{\gamma_n}\right)
\end{align*}
as in (\ref{theorem 2.2 permu schnitt}) and (\ref{theorem 2.2 permu vereinigung}).
For the expected value resulting from the sixfold sum, we get
\begin{align*}
       &\mathbb{E}\bigg[\!
     \left\{\!
     \left(\nonumber
     m_j\left(X_1^{(j)}\right)
    - m_j\left(X_{\pi(1)}^{(j)}\right)\right)^2
    \!-\!I(j)\right\}\!\left\{
     \left(\nonumber
     m_j\left(X_2^{(j)}\right)
    - m_j\left(X_{\pi(2)}^{(j)}\right)\right)^2
    \!-\!I(j)\!\right\}\\
      &
     \left\{\!
     \left(\nonumber
     m_j\left(X_3^{(j)}\right)
    - m_j\left(X_{\pi(3)}^{(j)}\right)\right)^2
    \!-\!I(j)\right\}\!
     \left\{
     \left(\nonumber
     m_j\left(X_{4}^{(j)}\right)
    - m_j\left(X_{\pi(4)}^{(j)}\right)\right)^2
    \!-\!I(j)\right\}
\end{align*}
\begin{align*}
    &
    \nonumber
     \left\{\!
     \left(\nonumber
     m_j\left(X_{5}^{(j)}\right)
    - m_j\left(X_{\pi(5)}^{(j)}\right)\right)^2
    \!-\!I(j)\right\}\!
         \left\{
     \left(\nonumber
     m_j\left(X_{6}^{(j)}\right)
    - m_j\left(X_{\pi(6)}^{(j)}\right)\right)^2
    \!-\!I(j)\right\}\!
    \bigg]\\
    &=
    \widetilde c_{28} \mathbb{P}\big(\pi(1)=2,\pi(2)\notin\{3,4,5,6\},
    \pi(3)=4,\\
    &\quad\quad\quad\quad
    \quad\quad\quad\quad
    \quad\quad\quad\quad
    \pi(4)\notin\{1,2,5,6\},\pi(5)=6,\pi(6)\notin\{1,2,3,4\}
    \big)\\
    &
    \mathbb{E}\bigg[\!
     \left\{\!
     \left(\nonumber
     m_j\left(X_1^{(j)}\right)
    - m_j\left(X_{2}^{(j)}\right)\right)^2
    \!-\!I(j)\right\}\left\{
     \left(\nonumber
     m_j\left(X_2^{(j)}\right)
    - m_j\left(X_{\pi(2)}^{(j)}\right)\right)^2
    -I(j)\right\}\\
        &
     \left\{
     \left(\nonumber
     m_j\left(X_3^{(j)}\right)
    - m_j\left(X_{4}^{(j)}\right)\right)^2
    -I(j)\right\}
     \left\{
     \left(\nonumber
     m_j\left(X_{4}^{(j)}\right)
    - m_j\left(X_{\pi(4)}^{(j)}\right)\right)^2
    -I(j)\right\}\\
      &
    \nonumber
     \left\{
     \left(\nonumber
     m_j\left(X_{5}^{(j)}\right)
    - m_j\left(X_{6}^{(j)}\right)\right)^2
    -I(j)\right\}
         \left\{
     \left(\nonumber
     m_j\left(X_{6}^{(j)}\right)
    - m_j\left(X_{\pi(6)}^{(j)}\right)\right)^2
    -I(j)\right\}\\
    &
    \quad\quad
    \bigg|
    \pi(2)\notin\{3,4,5,6\},
\pi(4)\notin\{1,2,5,6\},
\pi(6)\notin\{1,2,3,4\}
    \bigg]\\
        &+
    \widetilde c_{29} \mathbb{P}\left(\pi(1)=2,\pi(2)=3,
    \pi(3)\notin\{4,5,6\},\pi(4)=5,\pi(5)=6,\pi(6)\notin\{1,2,3\}
    \right)\\
    &
    \mathbb{E}\bigg[
    \! \left\{\!
     \left(\nonumber
     m_j\left(X_1^{(j)}\right)
    - m_j\left(X_{2}^{(j)}\right)\right)^2
    \!-\!I(j)\right\}\!\left\{
     \left(\nonumber
     m_j\left(X_2^{(j)}\right)
    - m_j\left(X_{3}^{(j)}\right)\right)^2
    \!-\!I(j)\right\}\\
        &
     \left\{
     \left(\nonumber
     m_j\left(X_3^{(j)}\right)
    - m_j\left(X_{\pi(3)}^{(j)}\right)\right)^2
    \!-\!I(j)\right\}\!
     \left\{
     \left(\nonumber
     m_j\left(X_{4}^{(j)}\right)
    - m_j\left(X_{5}^{(j)}\right)\right)^2
    \!-\!I(j)\right\}\\
      &
    \nonumber
     \left\{
     \left(\nonumber
     m_j\left(X_{5}^{(j)}\right)
    - m_j\left(X_{6}^{(j)}\right)\right)^2
    \!-\!I(j)\right\}\!
         \left\{
     \left(\nonumber
     m_j\left(X_{6}^{(j)}\right)
    - m_j\left(X_{\pi(6)}^{(j)}\right)\right)^2
    \!-\!I(j)\right\}\\
    &
    \quad\quad\quad\quad
    \bigg|
    \pi(3)\notin\{4,5,6\},
\pi(6)\notin\{1,2,3\}
    \bigg]\\
        &+
    \widetilde c_{30} \mathbb{P}\left(\pi(1)=2,\pi(2)=3,
    \pi(3)=4,\pi(4)\notin\{5,6\},\pi(5)=6,\pi(6)\notin\{1,2,3,4\}
    \right)\\
    &
    \mathbb{E}\bigg[
     \!\left\{\!
     \left(\nonumber
     m_j\left(X_1^{(j)}\right)
    - m_j\left(X_{2}^{(j)}\right)\right)^2
    \!-\!I(j)\right\}\!\left\{
     \left(\nonumber
     m_j\left(X_2^{(j)}\right)
    - m_j\left(X_{3}^{(j)}\right)\right)^2
    \!-\!I(j)\right\}\\
        &
     \left\{
     \left(\nonumber
     m_j\left(X_3^{(j)}\right)
    - m_j\left(X_{4}^{(j)}\right)\right)^2
    \!-\!I(j)\right\}\!
     \left\{
     \left(\nonumber
     m_j\left(X_{4}^{(j)}\right)
    - m_j\left(X_{\pi(4)}^{(j)}\right)\right)^2
    \!-\!I(j)\right\}\\
      &
    \nonumber
     \left\{
     \left(\nonumber
     m_j\left(X_{5}^{(j)}\right)
    - m_j\left(X_{6}^{(j)}\right)\right)^2
    -I(j)\right\}
         \left\{
     \left(\nonumber
     m_j\left(X_{6}^{(j)}\right)
    - m_j\left(X_{\pi(6)}^{(j)}\right)\right)^2
    -I(j)\right\}\\
    &
    \quad\quad\quad\quad\quad
    \bigg|
    \pi(4)\notin\{5,6\},
\pi(6)\notin\{1,2,3,4\}
    \bigg]\\
        &+
    \widetilde c_{31} \mathbb{P}\left(\pi(1)=2,\pi(2)=3,
    \pi(3)=4,\pi(4)=5,\pi(5)=6,\pi(6)\notin\{1,2,3,4,5\}
    \right)\\
    &
    \mathbb{E}\bigg[\!
     \left\{
     \left(\nonumber
     m_j\left(X_1^{(j)}\right)
    - m_j\left(X_{2}^{(j)}\right)\right)^2
    -I(j)\right\}\!\left\{
     \left(\nonumber
     m_j\left(X_2^{(j)}\right)
    - m_j\left(X_{3}^{(j)}\right)\right)^2
    -I(j)\right\}\\
        &
     \left\{
     \left(\nonumber
     m_j\left(X_3^{(j)}\right)
    - m_j\left(X_{4}^{(j)}\right)\right)^2
    -I(j)\right\}\!
     \left\{
     \left(\nonumber
     m_j\left(X_{4}^{(j)}\right)
    - m_j\left(X_{5}^{(j)}\right)\right)^2
    -I(j)\right\}\\
      &
    \nonumber
     \left\{
     \left(\nonumber
     m_j\left(X_{5}^{(j)}\right)
    - m_j\left(X_{6}^{(j)}\right)\right)^2
    -I(j)\right\}\!
         \left\{
     \left(\nonumber
     m_j\left(X_{6}^{(j)}\right)
    - m_j\left(X_{\pi(6)}^{(j)}\right)\right)^2
    -I(j)\right\}\\
    &
    \quad\quad\quad\quad\quad 
    \bigg|
\pi(6)\notin\{1,2,3,4,\}
    \bigg]\\
    &=\mathcal{O}\left(\frac{1}{\gamma_n^3}\right),
\end{align*}
since 
\begin{align*}
    &\max\Big\{\mathbb{P}\big(\pi(1)=2,\pi(2)\notin\{3,4,5,6\},
    \pi(3)=4,\\
    &\quad\quad\quad\quad\quad\quad
    \pi(4)\notin\{1,2,5,6\},\pi(5)=6,\pi(6)\notin\{1,2,3,4\}
    \big)\!,\\
    &
     \mathbb{P}\left(\pi(1)=2,\pi(2)=3,
    \pi(3)\notin\{4,5,6\},\pi(4)=5,\pi(5)=6,\pi(6)\notin\{1,2,3\}
    \right)\!,\\
    &
    \mathbb{P}\left(\pi(1)=2,\pi(2)=3,
    \pi(3)=4,\pi(4)\notin\{5,6\},\pi(5)=6,\pi(6)\notin\{1,2,3,4\}
    \right)\!,\\
    &
     \mathbb{P}\left(\pi(1)=2,\pi(2)=3,
    \pi(3)=4,\pi(4)=5,\pi(5)=6,\pi(6)\notin\{1,2,3,4,5\}
    \right)
    \Big\}\\
    &\leq 
    \mathbb{P}\left(\pi(1)=2,
    \pi(3)=4,\pi(5)=6
    \right)=\frac{!(\gamma_n-3)}{\gamma_n}=\frac{1}{\gamma_n^3}+o(1).
    \end{align*}
This results in 
\begin{align*}
    \mathbb{E}\left[
    \left\vert h_{\gamma_n}-I(j)\right\vert^6
    \right]
    =
    \mathcal{O}\left(\frac{1}{\gamma_n^3}\right),
\end{align*}
which implies that

\[
        \frac{\mathbb{E}\left[\vert h_{\gamma_n}-I(j) \vert^{6}
    \right]}{
    \mathbb{E}\left[\vert h_{\gamma_n}-I(j) \vert^{3} 
    \right]^2}
\]
is uniformly bounded. Hence,
\begin{align*}
        \frac{\mathbb{E}\left[\vert h_{\gamma_n}-I(j) \vert^{2k}
    \right]}{
    \mathbb{E}\left[\vert h_{\gamma_n}-I(j) \vert^{k} 
    \right]^2}
\end{align*}
is uniformly bounded for $k=2,3$.
Due to (A1)-(A3), \Cref{Lemma 2} can be applied
delivering
\begin{align*}
    \frac{\gamma_n}{n}\frac{\zeta_{\gamma_n}}{\gamma_n\zeta_{1,\gamma_n}}
    \leq
     \frac{\gamma_n^2}{n}\frac{\widetilde K}{\gamma_n^2\zeta_{1,\gamma_n}}
\end{align*}
for a $\widetilde K >0$. \Cref{Lemma 1} and $\gamma_n/\sqrt{n}\rightarrow 0$ as $n\rightarrow\infty$
result in
\begin{align*}
    \lim_{n\rightarrow\infty} \frac{\gamma_n}{n}\frac{\zeta_{\gamma_n}}{\gamma_n\zeta_{1,\gamma_n}}=0.
\end{align*}
Hence, \Cref{Theorem 2.1 von Peng} can be applied, which completes the proof.
\end{proof}

\section{Proof of Theorem 4.3}\label{appB}
\begin{proof}
    We start with the decomposition
    \begin{align*}
      \frac{ I_{n,\widehat M}^{OOB}(j)-I(j)}{\sqrt{\gamma_n^2\zeta_{1,\gamma_n}/n+\zeta_{\gamma_n}/M_n}}  
      =
        \frac{ I_{n,\widehat M}^{OOB}(j)-\widetilde I_{n,\widehat M}^{OOB}(j)}{\sqrt{\gamma_n^2\zeta_{1,\gamma_n}/n+\zeta_{\gamma_n}/M_n}}  
        +
          \frac{ \widetilde I_{n,\widehat M}^{OOB}(j)-I(j)}{\sqrt{\gamma_n^2\zeta_{1,\gamma_n}/n+\zeta_{\gamma_n}/M_n}}  .
    \end{align*}
    Due to \Cref{Theorem 3}, we already know that the second summand is asymptotically normal. Hence it remains to show that the first term vanishes for growing $M$ and $n$. The first term in the denominator is asymptotically equivalent to $\kappa/n$ by \Cref{Lemma 1}. From the proof of \Cref{Theorem 3}, we can observe that there is a constant $c_1$ such that
    $\zeta_{\gamma_n}=\mathbb{V}\text{ar}(h_{\gamma_n})=\mathbb{E}[\vert h_{\gamma_n}-I(j)\vert^2]\sim\frac{c_1}{\gamma_n}$. Thus
$
\frac{\zeta_{\gamma_n}}{M}\sim\frac{c_1}{\gamma_nM}
$
 holds true. The assumption $\frac{M\gamma_n\log(a_n)^2\log(a_np)}{a_n^{\phi(\lambda)}}\rightarrow 0$ implies
 $\frac{M\gamma_n}{a_n}\rightarrow 0$ as well as $a_n\sim n$, since $a_n=n-\gamma_n$.
Hence,
 \begin{align*}
     \sqrt{\frac{\gamma_n^2\zeta_{1,\gamma_n}}{n}+\frac{\zeta_{\gamma_n}}{M}}\sim\sqrt{\frac{c_1}{\gamma_n M}}.
 \end{align*}
For the numerator, we observe
     \begin{align}
     \nonumber
         &\left(I_{n,M}^{OOB}(j)-\widetilde I_{n,M}^{OOB}(j)\right)\\
         =&
         \nonumber
            \frac{1}{M\gamma_n}
    \sum_{t=1}^M
    \sum_{i\in\mathcal{D}_n^{-(t)}}
    \Big\{
        \big(
        Y_i-{m}_{n,1}(\textbf{X}_i^{\pi_{j,t}},
        \mathbf{\Theta}_t,\mathcal{D}_n)
        \big)^2
        -
        \big(
        Y_i-{m}_{n,1}(\textbf{X}_i,
          \mathbf{\Theta}_t,\mathcal{D}_n)
        \big)^2
    \Big\}
    \Big\}\\
    &-\nonumber
          \frac{1}{M\gamma_n}
    \sum_{t=1}^M
    \sum_{i\in\mathcal{D}_n^{-(t)}}
    \Big\{
        \big(
        Y_i-\widetilde{m}(\textbf{X}_i^{\pi_{j,t}})
        \big)^2
        -
        \big(
        Y_i-\widetilde{m}(\textbf{X}_i)
        \big)^2
    \Big\}
    \Big\}\\
    &=
    \nonumber
    \frac{1}{M\gamma_n}
    \sum_{t=1}^M
    \sum_{i\in\mathcal{D}_n^{-(t)}}
    \bigg\{
    {m}_{n,1}(\textbf{X}_i^{\pi_{j,t}},
          \mathbf{\Theta}_t,\mathcal{D}_n)^2
          -
          \widetilde{m}(\textbf{X}_i^{\pi_{j,t}})^2
          -
           {m}_{n,1}(\textbf{X}_i,
          \mathbf{\Theta}_t,\mathcal{D}_n)^2\\
          &\quad\quad\quad\quad\quad\quad\quad\quad\quad\quad+
              \widetilde{m}(\textbf{X}_i)^2
    +
    2Y_i\left(\nonumber
     \widetilde{m}(\textbf{X}_i^{\pi_{j,t}})
     -
    {m}_{n,1}(\textbf{X}_i^{\pi_{j,t}},
          \mathbf{\Theta}_t,\mathcal{D}_n)
    \right)
    \\
    &\quad\quad\quad\quad\quad\quad\quad\quad\quad\quad\quad\quad\quad
    -2Y_i\left(
     \widetilde{m}(\textbf{X}_i)
     -
    {m}_{n,1}(\textbf{X}_i,
          \mathbf{\Theta}_t,\mathcal{D}_n)
    \right)\bigg\}.
    \label{Theorem 4 diff decomposed}
     \end{align}
We will handle the terms in 
(\ref{Theorem 4 diff decomposed}) seperately
and start with (A3), (A4) to get
\begin{align*}
&\left\vert\widetilde{m}(\mathbf{X}_i)^2-m_{n,1}(\mathbf{X}_i,\mathbf{\Theta}_t,\mathcal{D}_n)^2 \right\vert\\
&= \left\vert\widetilde{m}(\mathbf{X}_i)+m_{n,1}(\mathbf{X}_i,\mathbf{\Theta}_t,\mathcal{D}_n) \right\vert
\cdot\left\vert\widetilde{m}(\mathbf{X}_i)-m_{n,1}(\mathbf{X}_i,\mathbf{\Theta}_t,\mathcal{D}_n) \right\vert\\
&\leq
(2K+K_{\varepsilon})\cdot\left\vert\widetilde{m}(\mathbf{X}_i)-m_{n,1}(\mathbf{X}_i,\mathbf{\Theta}_t,\mathcal{D}_n) \right\vert
\end{align*}
with $K$ from (A3) and $K_{\varepsilon}$ from (A4). Let $\xi>0$, then, using the Markov inequality for
the joint porbability measure $\mathbb{P}=\mathbb{P}_{\mathbf{X},\mathbf{\Theta}}$ of $\mathbf{\Theta}$ and $\mathbf{X}$,
we get
\begin{align}
    &\mathbb{P}\left(\sqrt{\gamma_n M}\left\vert\widetilde{m}(\mathbf{X}_i)^2-m_{n,1}(\mathbf{X}_i,\mathbf{\Theta}_t,\mathcal{D}_n)^2 \right\vert >\xi\right)\nonumber\\
    &\leq
    \mathbb{P}\left(
    (2K+K_{\varepsilon})\cdot 
    \sqrt{\gamma_n M}\left\vert\widetilde{m}(\mathbf{X}_i)-m_{n,1}(\mathbf{X}_i,\mathbf{\Theta}_t,\mathcal{D}_n) \right\vert >\xi\right)\nonumber\\
    &=
    \mathbb{P}\left(
    \sqrt{\gamma_n M}\left\vert\widetilde{m}(\mathbf{X}_i)-m_{n,1}(\mathbf{X}_i,\mathbf{\Theta}_t,\mathcal{D}_n) \right\vert >\frac{\xi}{2K+K_{\varepsilon}}\right)\nonumber\\
    &\leq (2K+K_{\varepsilon})^2\cdot \gamma_nM\cdot
    \frac{\mathbb{E}_{\mathbf{X},\mathbf{\Theta}}\left[\left\vert\widetilde{m}(\mathbf{X}_i)-m_{n,1}(\mathbf{X}_i,\mathbf{\Theta}_t,\mathcal{D}_n) \right\vert^2\right]}{\xi^2}.
\label{Theorem 4 Markov}
\end{align}
To handle this, we are going to use Theorem 2.3 from \cite{mazumder2024convergence}, which, under (A3), (A4) and (A7), states that for $\mathbf{X}_i\in[0,1]^p$, there is a $\delta\in(0,1)$
such that with probability at least $1-\delta$,
\begin{align}
    \mathbb{E}_{\mathbf{X}}\!\left[\left\vert\widetilde{m}(\mathbf{X}_i)\!-\!m_{n,1}(\mathbf{X}_i,\mathbf{\Theta}_t,\mathcal{D}_n) \right\vert^2\right]
   \leq
    C_{\lambda, K, K_{\varepsilon}}\!\frac{\log(a_n)^2\log(a_np)\!+\!\log(a_n)\log(1/\delta)}{a_n^{\phi(\lambda)}}    \label{Theorem 4 Baum ungleichung}
\end{align}
holds
for $a_n$ large enough if the tree depth $d=\lceil\log_2(a_n)/(1-\log_2(1-\lambda))\rceil$, where $\phi(\lambda):=\frac{-\log_2(1-\lambda)}{1-\log_2(1-\lambda)}$.
$C_{\lambda, K, K_{\varepsilon}}$ is a constant that only depends on $K, K_{\varepsilon}$ and $\lambda$ from (A7). In this case 
'$a_n$ large enough' means that $$\max\left\{\frac{2\exp(1)^2d}{a_n},\frac{\log(72p^d(n+1^{2d}/\delta))}{a_n}\right\}<\frac{3}{4}$$ holds true. Now, by choosing
$\delta=\frac{1}{a_n^{\phi(\lambda)}}$, combining (\ref{Theorem 4 Markov}), (\ref{Theorem 4 Baum ungleichung}) and the independence of $\mathbf{\Theta}_t$
and $\mathbf{X}_i$ delivers
\begin{align}
    &\gamma_nM\cdot\nonumber
        \mathbb{E}_{\mathbf{X},\mathbf{\Theta}}\left[\left\vert\widetilde{m}(\mathbf{X}_i)-m_{n,1}(\mathbf{X}_i,\mathbf{\Theta}_t,\mathcal{D}_n) \right\vert^2\right]\\
        &=
        \nonumber
      \gamma_nM
        \int
        \left\vert\widetilde{m}(\mathbf{x}_i)-m_{n,1}(\mathbf{x}_i,\mathbf{\theta},\mathcal{D}_n) \right\vert^2d\mathbb{P}_{\mathbf{X},\mathbf{\Theta}}(\mathbf{x},\mathbf{\theta})\\
        &=
          \nonumber
        \gamma_nM
        \int\int
        \left\vert\widetilde{m}(\mathbf{x}_i)-m_{n,1}(\mathbf{x}_i,\mathbf{\theta},\mathcal{D}_n) \right\vert^2d\mathbb{P}_{\mathbf{X}}(\mathbf{x})d\mathbb{P}_{\mathbf{\Theta}}(\mathbf{\theta})\\
          &\leq
          \nonumber
           \gamma_nM  C_{\lambda, K, K_{\varepsilon}}\frac{\log(a_n)^2\log(a_np)-\phi(\lambda)\log(a_n)\log(a_n)}{a_n^{\phi(\lambda)}}
            +
            \frac{\gamma_nM(2K+K_{\varepsilon})}{a_n^{\phi(\lambda)}}\\
            &\leq
                \gamma_nMC_{\lambda, K, K_{\varepsilon}}\frac{\log(a_n)^2\log(a_np)}{a_n^{\phi(\lambda)}}
            +
            \frac{\gamma_nM(2K+K_{\varepsilon})}{a_n^{\phi(\lambda)}}\label{Theorem 4 Erwartung abgeschätzt}
\end{align}
for $a_n$ large enough. The assumption $\frac{M\gamma_n\log(a_n)^2\log(a_np)}{a_n^{\phi(\lambda)}}\rightarrow 0$ implies that (\ref{Theorem 4 Erwartung abgeschätzt})
becomes arbitrary small. Thus, with (\ref{Theorem 4 Erwartung abgeschätzt}) we can see that
$\gamma_nM(\widetilde{m}(\mathbf{X}_i)^2-m_{n,1}(\mathbf{X}_i,\mathbf{\Theta},\mathcal{D}_n)^2)\overset{\mathbb{P}}{\longrightarrow}0$. The calculations in (\ref{Theorem 4 Markov}) can also be conducted for
\begin{align*}
    &\mathbb{P}_{\pi,\mathbf{X},\mathbf{\Theta}}\left(\sqrt{\gamma_n M}\left\vert\widetilde{m}(\mathbf{X}^{\pi_{j,t}}_i)^2-m_{n,1}(\mathbf{X}^{\pi_{j,t}}_i,\mathbf{\Theta}_t,\mathcal{D}_n)^2 \right\vert >\xi\right)\\
    &\leq
    (2K+K_{\varepsilon})^2\cdot \gamma_nM\cdot
    \frac{\mathbb{E}_{\pi,\mathbf{X},\mathbf{\Theta}}\left[\left\vert\widetilde{m}(\mathbf{X}^{\pi_{j,t}}_i)-m_{n,1}(\mathbf{X}^{\pi_{j,t}}_i,\mathbf{\Theta}_t,\mathcal{D}_n) \right\vert^2\right]}{\xi^2}.
\end{align*}
for the joint probability measure $\mathbb{P}_{\pi,\mathbf{X},\mathbf{\Theta}}$ of $\pi,\mathbf{X}$ and $\mathbf{\Theta}$.
Let $\mathfrak{D}_{\gamma_n}$ be the set of all derrangements of $\{1,\ldots, \gamma_n\}$
and for $(i_1,\ldots,i_{\gamma_n})\in \mathfrak{D}_{\gamma_n}$ denote $\mathbf{X}_i^{\pi_{j,t}}$ by $\mathbf{X}_i^{(i_1,\ldots, i_{\gamma_n})}$ when
$\pi_{j,t}$ takes the realization $(i_1,\ldots,i_{\gamma_n})$.
Then, the inependence of $\pi_{j,t},~\mathbf{X}_i$ and $\mathbf{\Theta}_t$ leads to
\begin{align*}
    &\gamma_nM\cdot
        \mathbb{E}_{\mathbf{X},\mathbf{\Theta}}\left[\left\vert\widetilde{m}(\mathbf{X}^{\pi_{j,t}}_i)-m_{n,1}(\mathbf{X}^{\pi_{j,t}}_i,\mathbf{\Theta}_t,\mathcal{D}_n) \right\vert^2\right]\\
        &=
        \nonumber
      \gamma_nM
        \int
        \left\vert\widetilde{m}\left(\mathbf{x}^{\pi_j}_i\right)-m_{n,1}\left(\mathbf{x}^{\pi_j}_i,\mathbf{\theta},\mathcal{D}_n\right) \right\vert^2d\mathbb{P}_{\pi,\mathbf{X},\mathbf{\Theta}}(\pi_j,\mathbf{x}_i,\mathbf{\theta})\\
        &=
       \nonumber
      \gamma_nM
        \int\int\int 
        \left\vert\widetilde{m}\left(\mathbf{x}^{\pi_j}_i\right)-m_{n,1}\left(\mathbf{x}^{\pi_j}_i,\mathbf{\theta},\mathcal{D}_n\right) \right\vert^2
        d\mathbb{P_\pi}(\pi_j)
        d\mathbb{P}_{\mathbf{X}}(\mathbf{x})d\mathbb{P}_{\mathbf{\Theta}}(\mathbf{\theta})\\
        &=
        \nonumber
      \gamma_nM
        \int\int\sum_{(i_1,\ldots,i_{\gamma_n})\in \mathfrak{D}_{\gamma_n}} \frac{1}{!\gamma_n}\\
        &\quad\quad\quad\quad\quad\cdot
        \left\vert\widetilde{m}\left(\mathbf{x}^{(i_1,\ldots,i_{\gamma_n})}_i\right)-m_{n,1}\left(\mathbf{x}^{(i_1,\ldots,i_{\gamma_n})}_i,\mathbf{\theta},\mathcal{D}_n\right) \right\vert^2
        d\mathbb{P}_{\mathbf{X}}(\mathbf{x})d\mathbb{P}_{\mathbf{\Theta}}(\mathbf{\theta})\\
        &=
         \frac{\gamma_nM}{!\gamma_n}
        \sum_{(i_1,\ldots,i_{\gamma_n})\in \mathfrak{D}_{\gamma_n}}\\
        &\quad\quad\quad\quad\int\int
        \left\vert\widetilde{m}\left(\mathbf{x}^{(i_1,\ldots,i_{\gamma_n})}_i\right)-m_{n,1}\left(\mathbf{x}^{(i_1,\ldots,i_{\gamma_n})}_i,\mathbf{\theta},\mathcal{D}_n\right) \right\vert^2
        d\mathbb{P}_{\mathbf{X}}(\mathbf{x})d\mathbb{P}_{\mathbf{\Theta}}(\mathbf{\theta})\\
         &=
        \gamma_nM
       \mathbb{E}_{\mathbf{X},\mathbf{\Theta}}\left[
        \left\vert\widetilde{m}\left(\mathbf{X}_{j,i}\right)-m_{n,1}\left(\mathbf{X}_{j,i},\mathbf{\Theta},\mathcal{D}_n\right) \right\vert^2\right].
\end{align*}
 Since $\mathbf{X}_{j,i}\in[0,1]^p$, the inequality (\ref{Theorem 4 Baum ungleichung}) can also be applied to $$ \mathbb{E}_{\mathbf{X},\mathbf{\Theta}}\left[
        \left\vert\widetilde{m}\left(\mathbf{X}_{j,i}\right)-m_{n,1}\left(\mathbf{X}_{j,i},\mathbf{\Theta},\mathcal{D}_n\right) \right\vert^2\right].$$
        Hence (\ref{Theorem 4 Erwartung abgeschätzt}) holds for $\mathbb{E}_{\mathbf{X},\mathbf{\Theta}}\left[
        \left\vert\widetilde{m}\left(\mathbf{X}_{j,i}\right)-m_{n,1}\left(\mathbf{X}_{j,i},\mathbf{\Theta},\mathcal{D}_n\right) \right\vert^2\right]$ as well and thus 
$$\gamma_nM(\widetilde{m}(\mathbf{X}^{\pi_{j,t}}_i)^2-m_{n,1}(\mathbf{X}^{\pi_{j,t}}_i,\mathbf{\Theta},\mathcal{D}_n)^2)\overset{\mathbb{P}}{\longrightarrow}0.$$
It remains to show that $$\sqrt{\gamma_nM}2Y_i(\widetilde{m}(\mathbf{X}_i)-m_{n,1}(\mathbf{X}_i,\mathbf{\Theta},\mathcal{D}_n))$$ and 
$$\sqrt{\gamma_nM}2Y_i(\widetilde{m}(\mathbf{X}^{\pi_{j,t}}_i)-m_{n,1}(\mathbf{X}^{\pi_{j,t}}_i,\mathbf{\Theta},\mathcal{D}_n))$$ are also vanishing for growing $n$.
We use (A3) and (A4) to get
\begin{align*}
   \vert 2Y_i(\widetilde{m}(\mathbf{X}_i)-m_{n,1}(\mathbf{X}_i,\mathbf{\Theta},\mathcal{D}_n))\vert
   \leq2(K+K_{\varepsilon})\cdot\vert\widetilde{m}(\mathbf{X}_i)-m_{n,1}(\mathbf{X}_i,\mathbf{\Theta},\mathcal{D}_n)\vert.
\end{align*}
As before, Markov inequality can be applied, which leads to
\begin{align}
    &\mathbb{P}\left(\sqrt{\gamma_n M}\cdot 2\vert Y_i\vert \cdot\left\vert\widetilde{m}(\mathbf{X}_i)-m_{n,1}(\mathbf{X}_i,\mathbf{\Theta}_t,\mathcal{D}_n) \right\vert >\xi\right)\nonumber\\
    &\leq 
    \mathbb{P}\left(2(K+K_{\varepsilon})\cdot\sqrt{\gamma_n M}\left\vert\widetilde{m}(\mathbf{X}_i)-m_{n,1}(\mathbf{X}_i,\mathbf{\Theta}_t,\mathcal{D}_n) \right\vert >\xi\right)\nonumber\\
    &=
    \mathbb{P}\left(\sqrt{\gamma_n M}\left\vert\widetilde{m}(\mathbf{X}_i)-m_{n,1}(\mathbf{X}_i,\mathbf{\Theta}_t,\mathcal{D}_n) \right\vert >\frac{\xi}{2(K+K_{\varepsilon})}\right)\nonumber\\
    &\leq
    4(K+K_{\varepsilon})^2\gamma_nM
     \frac{\mathbb{E}_{\mathbf{X},\mathbf{\Theta}}\left[\left\vert\widetilde{m}(\mathbf{X}_i)-m_{n,1}(\mathbf{X}_i,\mathbf{\Theta}_t,\mathcal{D}_n) \right\vert^2\right]}{\xi^2}.
     \label{Theorem 4 Markov2}
\end{align}
Since (\ref{Theorem 4 Markov}) tends to $0$ as $n$ grows large, (\ref{Theorem 4 Markov2}) also converges to 0. Therefore,
\begin{align*}
    2Y_i(\widetilde{m}(\mathbf{X}_i)-m_{n,1}(\mathbf{X}_i,\mathbf{\Theta}_t,\mathcal{D}_n))\overset{\mathbb{P}}{\longrightarrow}0
\end{align*}
holds true.
Note that (\ref{Theorem 4 Markov2}) also holds for $\mathbf{X}_i^{\pi_{j,t}}$ instead of $\mathbf{X}_i$ with the exact same arguments,
which implies 
$  2Y_i(\widetilde{m}(\mathbf{X}^{\pi_{j,t}}_i)-m_{n,1}(\mathbf{X}^{\pi_{j,t}}_i,\mathbf{\Theta}_t,\mathcal{D}_n))\overset{\mathbb{P}}{\longrightarrow}0$.
Hence,
 \begin{align*}
         \frac{ I_{n,M}^{OOB}(j)-\widetilde I_{n,M}^{OOB}(j)}{\sqrt{\gamma_n^2\zeta_{1,\gamma_n}/n+\zeta_{\gamma_n}/M}}  
         \overset{\mathbb{P}}{\longrightarrow}0
 \end{align*}
 holds and completes the proof. 
\end{proof}

\end{appendix}


\begin{funding}
The first author was supported by the Deutsche Forschungsgemeinschaft (DFG, German Research Foundation) - 314838170, GRK 2297 MathCoRe. 
\end{funding}

\end{document}